\newcommand{\str}{{\star}}
\newcommand{\cc}{{\mathbf {c}} }
\newcommand{\Scal}{{\rm Scal}}
\newcommand{\Ric}{{\rm Ric}}
\DeclareMathAlphabet{\mathpzc}{OT1}{pzc}{m}{it}
\newtheorem{theorem}{Theorem}[section]
\newtheorem{corollary}[theorem]{Corollary}
\newtheorem{lemma}[theorem]{Lemma}
\newtheorem{proposition}[theorem]{Proposition}
\newtheorem*{thm-a}{Theorem\! A}
\newtheorem*{thm-aa}{Theorem\! ${\rm A}^{\prime}$}
\newtheorem*{cor-bb}{Corollary\! ${\rm B}^{\prime}$}
\newtheorem*{thm-cc}{Theorem\! ${\rm C}^{\prime}$}
\newtheorem*{cor-a}{Corollary\! A}
\newtheorem*{thm-b}{Theorem\! B}
\newtheorem*{cor-b}{Corollary\! B}
\newtheorem*{thm-c}{Theorem\! C}
\newtheorem*{cor-c}{Corollary\! C}
\newtheorem*{thm-d}{Theorem\! D}
\newtheorem*{thm-dd}{Theorem\! ${\rm D}^{\prime}$}
\newtheorem*{cor-dd}{Corollary\! ${\rm D}^{\prime}$}
\newtheorem*{cor-d}{Corollary\! D}
\newtheorem*{thm-e}{Theorem\! E}
\newtheorem*{cor-e}{Corollary\! E}
\newtheorem*{thm-f}{Theorem\! F}
\newtheorem*{cor-f}{Corollary\! F}
\newtheorem*{conj-d}{Conjecture D}
\newtheorem*{conj-c}{Conjecture C}
\newtheorem{thm}{Theorem}[section]
\theoremstyle{definition}
\newtheorem{definition}[thm]{Definition}
\newtheorem*{remark}{Remark}
\newtheorem*{remarks}{Remarks}
\newtheorem{examples}{Examples}
\newtheorem{example}{Example}[section]
\begin{document}
\author{Mohammed Larbi Labbi} 
\address{Department of Mathematics\\
 College of Science\\
  University of Bahrain\\
  32038, Bahrain.}
\email{mlabbi@uob.edu.bh}
\renewcommand{\subjclassname}{
  \textup{2020} Mathematics Subject Classification}
\subjclass[2020]{Primary: 58E11, 53C25.}

\title{On some critical Riemannian metrics and Thorpe-type conditions}  \maketitle
\begin{abstract}
We study critical metrics of higher-order curvature functionals on compact Riemannian $n$-manifolds $(M,g)$. For an integer $k$ with $2 \leq 2k \leq n$, let $R^k$ denote the $k$-th exterior power of the Riemann curvature tensor, viewed as a double form ($2k$-th Thorpe tensor). We investigate the Riemannian functionals
\[
H_{2k}(g)=\int_M \operatorname{tr}(R^k)\,\mathrm{dvol}_g
\quad\text{and}\quad
G_{2k}(g)=\int_M \|R^k\|^2\,\mathrm{dvol}_g,
\]
which generalize the Hilbert--Einstein functional  and the total squared norm curvature, obtained for $k=1$ respectively.

Using the formalism of double forms, we develop a systematic variational framework yielding compact first variation formulas for these functionals. Two key lemmas streamline the variational computations. A central technical ingredient is a generalization of the classical Lanczos identity to symmetric double forms of arbitrary even degree, providing explicit algebraic relations between the tensors $\cc^{2k-1}(R^k \circ R^k)$ and $\cc^{4k-1}(R^{2k})$.

As a main geometric application, we introduce $(2k)$-Thorpe and $(2k)$-anti-Thorpe metrics, defined by self-duality and anti-self-duality conditions on $g^{r-2k}R^k$ in even dimensions $n=2r$. In the critical dimension $n=4k$, these metrics are absolute minimizers of $G_{2k}$, with the minimum determined by the Euler characteristic. For $n>4k$, they satisfy a harmonicity property leading to rigidity results under suitable curvature positivity assumptions.

We further establish equivalences among variational criticality conditions. For hyper-$(2k)$-Einstein metrics, characterized by $\cc R^k=\lambda g^{2k-1}$, being critical for $G_{2k}$ is equivalent to being $(4k)$-Einstein and to being weakly $(2k)$-Einstein. In the locally conformally flat setting, we classify all $4$-Thorpe metrics, showing that they are either space forms or Riemannian products $\mathbb{S}^r(c) \times \mathbb{H}^r(-c)$.

Overall, the paper explores the intermediate range $2 \leq 2k \leq n/2$, extending classical results on Einstein and Thorpe metrics and revealing new connections between curvature duality, harmonicity, and variational properties of higher-order curvature tensors.
\end{abstract}

\keywords{Keywords:
Critical Riemannian metrics; generalized Einstein metrics;
Thorpe metrics; higher-order curvature functionals; curvature identities
}

\tableofcontents

\section{Introduction and Summary of the Main Results} \label{intro}

The study of critical points of curvature functionals on the space of Riemannian metrics lies at the heart of modern differential geometry, with deep links to geometric analysis, general relativity, and theoretical physics. Classical examples include Einstein metrics---critical points of the total scalar curvature---and metrics critical for the \(L^2\)-norm of the full curvature tensor. In this paper we investigate natural higher-order generalizations of these functionals, focusing on the \(L^2\)-norm of the exterior powers of the curvature tensor, viewed through the convenient formalism of double forms.

Throughout, \((M,g)\) denotes a smooth, compact Riemannian \(n\)-manifold without boundary. We regard the Riemann curvature tensor \(R\) as a symmetric \((2,2)\) double form. For an integer \(k\) with \(2 \leq 2k \leq n\), let \(R^k\) denote the \(k\)-th exterior power of \(R\) in the exterior algebra of double forms; this \((2k,2k)\) double form is also known as the \emph{\(2k\)-th Thorpe tensor} or the Gauss--Kronecker tensor. We study the functional
\[
G_{2k}(g)=\int_M\|R^k\|^2\,\mathrm{dvol}_g=\int_M\langle R^k,R^k\rangle\,\mathrm{dvol}_g,
\]
which for \(k=1\) coincides with the classical \(L^2\)-curvature functional. Our main goal is to characterize the critical points of \(G_{2k}\) and to uncover their relations with several natural classes of Riemannian metrics, including generalized Einstein conditions and metrics satisfying duality properties.

\subsection{\((2k)\)-Thorpe and \((2k)\)-Anti-Thorpe Riemannian Manifolds}

Assume \(n=2r\) is even and \(2\leq 2k\leq r\). We say that \((M,g)\) is \emph{\((2k)\)-Thorpe} if
\[
\star\!\bigl(g^{r-2k}R^k\bigr)=g^{r-2k}R^k,
\]
and \emph{\((2k)\)-anti-Thorpe} if
\[
\star\!\bigl(g^{r-2k}R^k\bigr)=-g^{r-2k}R^k,
\]
where \(\star\) is the (double) Hodge star operator on double forms (see §\ref{Hodge}).  
For \(k=1\) one recovers Einstein metrics (2-Thorpe) and conformally flat metrics with zero scalar curvature (2-anti-Thorpe) \cite{Labbi-double-forms}. When \(r=2k\) (i.e.\ \(n=4k\)) these reduce to the classical Thorpe/anti-Thorpe manifolds studied in \cite{Thorpe,Kim,Labbi-JAUMS,Labbi-double-forms}. The present paper systematically investigates the intermediate range \(2\leq 2k\leq r\), which has not been treated in full generality before.

One of our first results shows that in the critical dimension \(n=4k\), Thorpe and anti-Thorpe metrics not only are critical for \(G_{2k}\) but actually minimize it under appropriate topological conditions.

\begin{proposition}\label{prop-e}
Let \((M,g)\) be a compact Riemannian manifold of dimension \(n=4k\). Then:
\begin{enumerate}
    \item If \(\chi(M)>0\), the functional \(G_{2k}\) attains its absolute minimum exactly on the \(2k\)-Thorpe metrics, and the minimum value equals \((2\pi)^{2k}(2k)!\,\chi(M)\).
    \item If \(\chi(M)<0\), the functional \(G_{2k}\) attains its absolute minimum exactly on the \(2k\)-anti-Thorpe metrics, and the minimum value equals \(-(2\pi)^{2k}(2k)!\,\chi(M)\).
\end{enumerate}
\end{proposition}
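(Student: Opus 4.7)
The plan is to reduce the proposition to a pointwise algebraic decomposition combined with the Chern--Gauss--Bonnet formula, expressed in the language of double forms. The key observation is that in the critical dimension $n=4k$, the Hodge star $\star$ acts on the $(2k,2k)$-double form $R^k$ as an involution (one checks $\star^{2}=1$ on type $(p,p)$ double forms with $p=2k$ in dimension $4k$), so $R^k$ admits an orthogonal decomposition $R^k=R^k_{+}+R^k_{-}$ with $\star R^k_{\pm}=\pm R^k_{\pm}$.

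The first step is to write down the Chern--Gauss--Bonnet formula in this formalism. In dimension $n=4k$, $R^k$ pairs with $\star R^k$ to give (a constant multiple of) the Pfaffian integrand, so one has the identity
\[
\int_M \langle R^k,\star R^k\rangle\,\mathrm{dvol}_g \;=\; (2\pi)^{2k}(2k)!\,\chi(M).
\]
I would either quote this from the references on double forms (e.g.\ \cite{Labbi-double-forms}) or derive it from the Lanczos-type identity mentioned in the abstract, which rewrites $\mathrm{tr}(R^{2k})$ as $\langle R^k,\star R^k\rangle$ up to combinatorial constants in the top degree.

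The second step is the orthogonal split. Since $R^k_+\perp R^k_-$ pointwise, one gets
\[
\|R^k\|^2=\|R^k_+\|^2+\|R^k_-\|^2,\qquad \langle R^k,\star R^k\rangle=\|R^k_+\|^2-\|R^k_-\|^2.
\]
Integrating and combining with Chern--Gauss--Bonnet yields the two inequalities
\[
G_{2k}(g)-(2\pi)^{2k}(2k)!\,\chi(M)=2\!\int_M\|R^k_-\|^2\,\mathrm{dvol}_g\geq 0,
\]
\[
G_{2k}(g)+(2\pi)^{2k}(2k)!\,\chi(M)=2\!\int_M\|R^k_+\|^2\,\mathrm{dvol}_g\geq 0,
\]
so $G_{2k}(g)\geq (2\pi)^{2k}(2k)!\,|\chi(M)|$ for every metric.

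The third step is the equality discussion. If $\chi(M)>0$, the first inequality shows the absolute minimum is $(2\pi)^{2k}(2k)!\,\chi(M)$, attained precisely when $R^k_-\equiv 0$, i.e.\ $\star R^k=R^k$, which is the $(2k)$-Thorpe condition; note that the second inequality then forces any anti-Thorpe metric on $M$ to yield $\chi(M)\leq 0$, giving a consistency check. The case $\chi(M)<0$ is symmetric, with minimizers characterized by $R^k_+\equiv 0$, i.e.\ the anti-Thorpe condition. The main obstacle is really only step one: pinning down the exact constant $(2\pi)^{2k}(2k)!$ in the Chern--Gauss--Bonnet formula within this double-forms convention; once that normalization is fixed, the rest is a clean self-dual/anti-self-dual decomposition argument in the spirit of the classical four-dimensional Thorpe inequality $\int\|R\|^2\geq 8\pi^2|\chi|$.
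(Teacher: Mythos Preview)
Your argument is correct and is essentially the same as the paper's: the paper expands $\|R^k\pm\ast R^k\|^2=2\|R^k\|^2\pm2\langle R^k,\ast R^k\rangle$, identifies $\langle R^k,\ast R^k\rangle=h_{4k}$ with the Gauss--Bonnet integrand, and integrates, which is exactly your self-dual/anti-self-dual split repackaged. The only point the paper makes more explicit is the normalization step you flagged, namely $h_{4k}=\ast R^{2k}=\ast(R^kR^k)=\langle R^k,\ast R^k\rangle$, which fixes the constant via the Chern--Gauss--Bonnet theorem.
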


For dimensions \(n\geq 4k\) we discover a remarkable harmonicity property.

\begin{proposition}\label{prop-f}
Let \((M,g)\) be a Riemannian manifold of even dimension \(n=2r\) and let \(2\leq 2k\leq r\). If \((M,g)\) is \(2k\)-Thorpe (resp.\ \(2k\)-anti-Thorpe), then the tensor \(R^k\) is a harmonic double form.
\end{proposition}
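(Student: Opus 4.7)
The plan is to verify that $R^k$ is both closed and co-closed as a double form, i.e.\ $D'R^k=D''R^k=0$ and $\delta'R^k=\delta''R^k=0$, where $D',D''$ and $\delta',\delta''$ are the exterior derivatives on double forms and their formal adjoints (one pair per slot). Closedness follows immediately: the second Bianchi identity reads $D'R=D''R=0$, and the Leibniz rule for the exterior product of double forms then gives $D'(R^k)=D''(R^k)=0$.

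For co-closedness, I would exploit the Thorpe condition by passing to the middle bidegree. Set $\Omega:=g^{r-2k}R^k$, a double form of bidegree $(r,r)$. Since $g$ is $\nabla$-parallel, $D'g=D''g=0$, whence $D'\Omega=g^{r-2k}D'R^k=0$ and similarly $D''\Omega=0$. By hypothesis $\star\Omega=\pm\Omega$, so the intertwining relation $\delta=\pm\star D\star$ on middle-bidegree double forms yields
\[
\delta'\Omega=\pm\star D'\star\Omega=\pm\star D'(\pm\Omega)=0,
\]
and likewise $\delta''\Omega=0$. Thus $\Omega$ is harmonic. To descend back to $R^k$, note that $D'$ commutes with multiplication by $g$, so $D'\Omega=g^{r-2k}D'R^k$, and the injectivity of the operator $g^{r-2k}\cdot$ on $(2k+1,2k)$-double forms (valid because $2k+1\le r+1$) recovers $D'R^k=0$ independently of Step~1. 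For the codifferentials, a K\"ahler-type commutator identity in the Greub--Labbi algebra expresses $[\delta',g\cdot]$ as an algebraic correction involving the contraction $c$ and $D'$; the already established closedness of $R^k$ annihilates this correction, giving $g^{r-2k}\delta'R^k=0$ and hence $\delta'R^k=0$ by injectivity. The argument for $\delta''R^k=0$ is symmetric, and the two cases $\star\Omega=\Omega$ and $\star\Omega=-\Omega$ are handled in parallel.

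The main obstacle is precisely this last descent step. While $D$ commutes with multiplication by $g$, its adjoint $\delta$ does so only up to a commutator correction, which must be identified explicitly using the algebraic relations between the contraction operator and covariant differentiation in the double-form calculus. Making those commutator terms vanish on $R^k$ -- using the closedness supplied by Bianchi a second time -- is where the technical heart of the argument lies. In the critical dimension $n=4k$ the exponent $r-2k$ vanishes, $\Omega=R^k$, and the descent is trivial; the point of the proposition is precisely to handle the general case $n=2r>4k$.
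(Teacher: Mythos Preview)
Your proposal is correct and follows essentially the same route as the paper's proof (Proposition~\ref{prop:Thorpe-harmonic}): closedness from the second Bianchi identity, co-closedness of $\Omega=g^{r-2k}R^k$ from self-duality via $\delta=\pm\star D\star$, and then a descent to $\delta R^k=0$ using injectivity of $g^{r-2k}\cdot$. The descent step you flag as the technical heart is dispatched in the paper by a direct citation: Formula~(46) of \cite{Labbi-Lapl-Lich} gives exactly the commutation $\delta(g^{r-2k}R^k)=g^{r-2k}\delta R^k$ (the correction terms vanishing because $R^k$ is closed), after which the cancellation property (Proposition~\ref{cor:cancellation}) finishes the argument.
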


This harmonicity leads to rigidity results under positivity conditions on the curvature. Recall that \(R\) is called \emph{\(r\)-positive} if the sum of the lowest \(r\) eigenvalues of the curvature operator is positive.

\begin{corollary}\label{corollary-f}
Let \((M,g)\) be a closed, connected \((2k)\)-Thorpe manifold of dimension \(n\geq 4k\geq 4\). Then:
\begin{itemize}
    \item[(a)] If \(R\) is \(\bigl\lfloor\frac{n-2k+1}{2}\bigr\rfloor\)-positive, then \(R^k\) has constant sectional curvature.
    \item[(b)] If \(R\) is \(\bigl\lfloor\frac{n-2k+2}{2}\bigr\rfloor\)-positive, then \((M,g)\) is \emph{hyper \((2k)\)-Einstein} (i.e., the first contraction \(\mathbf{c}R^k\) has constant sectional curvature).
\end{itemize}
\end{corollary}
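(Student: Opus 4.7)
The plan is to extract harmonicity of $R^k$ from Proposition~\ref{prop-f} and then feed it into a Bochner--Weitzenb\"ock argument for double forms in the spirit of the classical Gallot--Meyer vanishing theorem for $p$-forms.

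By Proposition~\ref{prop-f}, the $(2k)$-Thorpe hypothesis forces the symmetric $(2k,2k)$ double form $R^k$ to be harmonic on the closed manifold $(M,g)$. The Bochner--Weitzenb\"ock formula for a harmonic symmetric $(p,p)$ double form $\omega$ yields, after integration by parts,
\[
\int_M |\nabla \omega|^2\,\mathrm{dvol}_g + \int_M \langle \mathcal K_p(R)\,\omega,\omega\rangle\,\mathrm{dvol}_g = 0,
\]
where $\mathcal K_p(R)$ is a curvature endomorphism on $(p,p)$ double forms built from the curvature operator of $R$. Since the Laplacian commutes with the Hodge star and the latter exchanges bi-degrees $(p,p)$ and $(n-p,n-p)$, the double-form version of Meyer's lemma (as developed in \cite{Labbi-double-forms,Labbi-JAUMS}) guarantees that $\mathcal K_p(R)$ is nonnegative on Bianchi-symmetric double forms whenever $R$ is $\lceil(n-p)/2\rceil$-positive, with kernel precisely the line $\R\cdot g^p$.

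For \textbf{part (a)}, I would apply this with $p=2k$ and $\omega = R^k$. The hypothesis that $R$ is $\lfloor(n-2k+1)/2\rfloor = \lceil(n-2k)/2\rceil$-positive makes the Weitzenb\"ock term nonnegative on $R^k$, so the integral identity forces both $\nabla R^k = 0$ and $R^k(x) \in \R\cdot g^{2k}$ at every point. Hence $R^k = c\,g^{2k}$ with $c$ constant, i.e.\ $R^k$ has constant sectional curvature. For \textbf{part (b)}, I would first note that, by the second Bianchi identity inherited by $R^k$ from $R$, the first contraction $\cc R^k$ is again a harmonic double form, now of bi-degree $(2k-1,2k-1)$. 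Running the same Bochner argument at $p = 2k-1$ requires $R$ to be $\lceil(n-2k+1)/2\rceil = \lfloor(n-2k+2)/2\rfloor$-positive, and delivers $\cc R^k = \lambda\,g^{2k-1}$ with $\lambda$ constant, i.e.\ $(M,g)$ is hyper $(2k)$-Einstein.

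The principal technical obstacle is controlling the sign of $\mathcal K_p(R)$ on Bianchi-symmetric $(p,p)$ double forms in terms of the spectrum of the curvature operator of $R$. Because $R^k$ is itself a $k$-fold exterior product of $R$, the Weitzenb\"ock curvature contribution couples $R$ against a higher-degree-in-$R$ object, so one must exploit the algebraic symmetries of $R^k$ (antisymmetry in each slot, exchange symmetry, first Bianchi) together with the Hodge duality to rewrite the endomorphism as a weighted sum involving only the $\lceil(n-p)/2\rceil$ lowest eigenvalues of the curvature operator. This reduction is precisely what the floor expressions in the statement encode, and is the ingredient to invoke from the double-form Bochner theory.
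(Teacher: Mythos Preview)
Your strategy coincides with the paper's: Proposition~\ref{prop-f} gives harmonicity of $R^k$, and then one invokes a Bochner--Weitzenb\"ock rigidity result for harmonic double forms. The paper does not re-derive this Bochner step but quotes it as Theorem~B of \cite{Labbi-Lapl-Lich} (restated in the paper as Theorem~\ref{thm:Rk-harmonic-rigidity}), which says directly: if $R^k$ is harmonic and $R$ is $\bigl\lfloor\tfrac{n-j+1}{2}\bigr\rfloor$-positive, then $\cc^{2k-j}R^k=\lambda g^j$. Parts (a) and (b) are then the cases $j=2k$ and $j=2k-1$.

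One point to tighten in your part (b): you assert that $\cc R^k$ is harmonic ``by the second Bianchi identity inherited by $R^k$ from $R$.'' The second Bianchi identity alone only gives closedness; the coclosedness of $R^k$ comes from the Thorpe condition via Proposition~\ref{prop-f}, and passing harmonicity from $R^k$ to $\cc R^k$ further requires the commutation relation $\tilde\delta=-(\cc D+D\cc)$ (and its companions) for symmetric double forms. This is exactly the machinery developed in \cite{Labbi-Lapl-Lich}, so your route works, but the sentence as written under-sells what is needed. The paper's packaging via Theorem~\ref{thm:Rk-harmonic-rigidity} sidesteps this by stating the conclusion for all contractions $\cc^{2k-j}R^k$ at once, starting only from harmonicity of $R^k$ itself.
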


\begin{corollary}\label{corollary-g}
Let \((M,g)\) be a closed, connected \((2k)\)-anti-Thorpe manifold of dimension \(n\geq 4k\geq 4\). If \(R\) is \(\bigl\lfloor\frac{n-2k+2}{2}\bigr\rfloor\)-positive, then \(R^k=0\) (i.e., \((M,g)\) is \emph{\(k\)-flat}).
\end{corollary}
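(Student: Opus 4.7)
The plan is to combine the harmonicity granted by Proposition \ref{prop-f} with a Bochner--Weitzenböck argument for double forms under the stated curvature positivity, and then rule out the only surviving parallel solutions via the anti-self-duality condition.

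First, I would invoke Proposition \ref{prop-f}: the $(2k)$-anti-Thorpe hypothesis forces $R^k$ to be a harmonic $(2k,2k)$-double form on the closed manifold $(M,g)$. The Lichnerowicz-type Weitzenböck identity for double forms then yields
\[
0 \;=\; \int_M \|\nabla R^k\|^2\, dvol_g \;+\; \int_M \langle \mathcal{W}(R)\, R^k, R^k\rangle\, dvol_g,
\]
where $\mathcal{W}(R)$ is the curvature endomorphism induced by $R$ on $(2k,2k)$-double forms. The numerical assumption that $R$ is $\lfloor (n-2k+2)/2\rfloor$-positive is calibrated, by the Gallot--Meyer-type estimate adapted to Thorpe-type tensors, so that $\mathcal{W}(R)$ is pointwise positive semidefinite with kernel exactly $\R\cdot g^{2k}$. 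Both summands above must therefore vanish, giving $\nabla R^k=0$ and $R^k = \lambda\, g^{2k}$ for some real constant $\lambda$ (the same structural output as in Corollary \ref{corollary-f}(a)).

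Second, I would feed this into the anti-Thorpe identity $\star(g^{r-2k} R^k) = -g^{r-2k} R^k$. Substituting $R^k=\lambda g^{2k}$ produces $\star(\lambda g^{r}) = -\lambda g^{r}$. But $g^{r}$ is, up to a positive normalization, the identity endomorphism on $\Lambda^r T^*M$ viewed as a $(r,r)$-double form, and is in particular self-dual: $\star g^{r} = g^{r}$. Hence $\lambda g^{r} = -\lambda g^{r}$, which forces $\lambda=0$, and so $R^k \equiv 0$, i.e.\ $(M,g)$ is $k$-flat.

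The main obstacle is the first step: pinning down the precise threshold at which $r$-positivity of the curvature operator forces non-negativity of the Weitzenböck endomorphism on $(2k,2k)$-double forms while keeping its kernel equal to $\R\cdot g^{2k}$. This is the sharp algebraic input that already drives Corollary \ref{corollary-f}, and it is where the specific constant $\lfloor (n-2k+2)/2\rfloor$ enters. Once granted, the passage to $R^k=0$ in the anti-Thorpe case is completed by the short self-duality computation above.
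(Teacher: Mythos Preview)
There is a genuine gap in your first step: you overclaim what the positivity hypothesis yields. The threshold $\lfloor(n-2k+2)/2\rfloor$ in Corollary~\ref{corollary-g} is \emph{weaker} than the threshold $\lfloor(n-2k+1)/2\rfloor$ in Corollary~\ref{corollary-f}(a) (larger $r$ in $r$-positivity is a weaker condition, since one is summing more eigenvalues). Under the weaker hypothesis, the Bochner argument does \emph{not} force the Weitzenb\"ock kernel on $(2k,2k)$-forms down to $\R\cdot g^{2k}$, and you cannot conclude $R^k=\lambda g^{2k}$. What one actually obtains at this level (Theorem~\ref{thm:Rk-harmonic-rigidity}, case $j=2k-1$) is only the hyper $(2k)$-Einstein condition $\cc R^k=\lambda g^{2k-1}$. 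Your self-duality endgame, which needs $R^k=\lambda g^{2k}$ as input, therefore does not apply.

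The correct route uses the anti-Thorpe hypothesis more substantially than a single self-duality identity. From harmonicity and $\lfloor(n-2k+2)/2\rfloor$-positivity one gets $\cc R^k=\lambda g^{2k-1}$; in the orthogonal decomposition $R^k=\sum_{i=0}^{2k}g^{2k-i}\omega_i$ this forces $\omega_1=\cdots=\omega_{2k-1}=0$ (repeated contraction shows each $\cc^rR^k$ is divisible by $g$, so Corollary~\ref{cor:vanishing-omega_i} kills $\omega_{2k-r}$ for $1\le r\le 2k-1$). The anti-Thorpe condition, via Theorem~\ref{thm:general-duality}, independently kills all even-indexed components $\omega_0,\omega_2,\ldots,\omega_{2k}$. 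Together these exhaust all components, so $R^k=0$. Equivalently: anti-Thorpe gives $h_{2k}=0$, hence $\lambda=0$ and $\cc R^k=0$; then $R^k=\omega_{2k}$, which anti-Thorpe also annihilates.
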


We also obtain obstructions and classification results for hypersurfaces in Euclidean space and for conformally flat manifolds.

\begin{proposition}\label{prop-h}
For \(n\geq 4k\) there are no compact embedded hypersurfaces in \(\mathbb{R}^{n+1}\) that are \(2k\)-anti-Thorpe. Moreover, the round sphere is the only compact embedded \(2k\)-Thorpe hypersurface in \(\mathbb{R}^{n+1}\).
\end{proposition}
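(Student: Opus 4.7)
The plan combines a pointwise algebraic analysis at a point of maximum Euclidean distance from the origin with a globalization argument using the harmonicity from Proposition \ref{prop-f}. Write $n=2r$ with $r\ge 2k$. By Gauss's equation, the curvature of a hypersurface satisfies $R=\tfrac12 h^2$ in the exterior algebra of double forms, so $R^k=2^{-k}h^{2k}$. Pick $p\in M$ maximizing $|x|^2$; computing the Hessian on $M$ of $f(x)=|x|^2$ at this maximum shows $h(p)\ge|p|^{-1}I$ with the outward normal pointing away from the origin, so all principal curvatures $\lambda_1(p),\ldots,\lambda_n(p)$ are strictly positive. In an orthonormal coframe at $p$ diagonalizing $h$ one computes
\[
g^{r-2k}R^k=c\sum_{|K|=r}\sigma_{2k}(\lambda_K)\,\theta^K\otimes\theta^K,\qquad \star(g^{r-2k}R^k)=c\sum_{|K|=r}\sigma_{2k}(\lambda_{K^c})\,\theta^K\otimes\theta^K,
\]
where $c=(2k)!\,(r-2k)!/2^k>0$, $\sigma_{2k}$ is the $2k$-th elementary symmetric polynomial, $\lambda_K=(\lambda_i)_{i\in K}$, and I use that the double Hodge star sends $\theta^K\otimes\theta^K$ to $\theta^{K^c}\otimes\theta^{K^c}$.

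The anti-Thorpe condition at $p$ requires $\sigma_{2k}(\lambda_K)+\sigma_{2k}(\lambda_{K^c})=0$ for every $r$-subset $K$; since $r\ge 2k$ both sums are nonempty and strictly positive, contradicting the equation. The Thorpe condition reads $\sigma_{2k}(\lambda_K)=\sigma_{2k}(\lambda_{K^c})$. Applying this to two $r$-subsets $K,K'$ that differ only by swapping a pair $i\leftrightarrow j$ (with $i\in K$, $j\notin K$), and subtracting, yields
\[
(\lambda_i-\lambda_j)\bigl[\sigma_{2k-1}(\lambda_{K\setminus\{i\}})+\sigma_{2k-1}(\lambda_{K^c\setminus\{j\}})\bigr]=0.
\]
The bracket is strictly positive (positive entries, degree $2k-1\le r-1$), so $\lambda_i(p)=\lambda_j(p)$ for every pair, and $M$ is totally umbilic at $p$.

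To conclude that $M$ is a round sphere, I would invoke Proposition \ref{prop-f}: on a $2k$-Thorpe metric the double form $R^k=2^{-k}h^{2k}$ is harmonic on $M$. Combined with Codazzi's equation, this produces an overdetermined elliptic-type system for the shape operator $h$; starting from the pointwise umbilicity at $p$ and exploiting this system together with compactness of $M$, one extends umbilicity to all of $M$. The classical rigidity theorem for compact totally umbilic embedded hypersurfaces in $\mathbb{R}^{n+1}$ then forces $M$ to be a round sphere.

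The main obstacle is this globalization step. The algebraic computation supplies only an isolated umbilic point, and the pointwise Thorpe condition by itself does not force umbilicity at points where some $\lambda_i$ change sign. One must therefore leverage the nonlocal analytic content of the Thorpe condition---namely the harmonicity of $R^k$ from Proposition \ref{prop-f}---to spread umbilicity from $p$ to the whole hypersurface.
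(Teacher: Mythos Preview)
Your anti-Thorpe argument is correct and in fact more direct than the paper's. The paper observes that a $(2k)$-anti-Thorpe metric has $h_{2k}\equiv 0$ (Proposition~\ref{prop:Thorpe-implications}), hence constant $2k$-th mean curvature, and then invokes the Ros--Korevaar theorem to reach a contradiction. Your pointwise contradiction at the farthest point bypasses Ros entirely for this half, which is a genuine simplification.

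The Thorpe half, however, has a real gap --- one you yourself flag. You establish umbilicity only at the single point $p$, and the proposed globalization via harmonicity of $R^k$ (Proposition~\ref{prop-f}) is a hope rather than an argument: no mechanism is given by which $\delta R^k=0$ together with Codazzi propagates umbilicity from $p$ to all of $M$, and there is no reason to expect it to. The paper's route is entirely different and sidesteps this difficulty. A $(2k)$-Thorpe metric is automatically $(2k)$-Einstein (Proposition~\ref{prop:Thorpe-implications}), so its Gauss--Bonnet curvature $h_{2k}$ is \emph{constant} on $M$; on a hypersurface of $\mathbb{R}^{n+1}$ this $h_{2k}$ is a nonzero multiple of the $2k$-th mean curvature $\sigma_{2k}(\lambda_1,\dots,\lambda_n)$. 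Ros's theorem --- compact embedded hypersurfaces in $\mathbb{R}^{n+1}$ with some constant higher-order mean curvature are round spheres --- then finishes the proof. The global step you are missing is precisely the content of Ros's theorem, a nontrivial result with no elementary substitute; your local swap computation at $p$ is consistent with the conclusion but cannot replace it.
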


\begin{proposition}\label{prop-i}
Let \((M,g)\) be a locally conformally flat \(n\)-manifold with \(n=2r\geq 8\). Then \((M,g)\) is \(4\)-Thorpe if and only if it is either flat or covered by one of the following: the standard sphere \(\mathbb{S}^n\), hyperbolic space \(\mathbb{H}^n\), or the product \(\mathbb{S}^r(c)\times\mathbb{H}^r(-c)\).
\end{proposition}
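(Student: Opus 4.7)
The plan is to exploit the pure-form structure of the Riemann tensor on a locally conformally flat manifold, reduce the $4$-Thorpe self-duality to a symmetric-function identity on the eigenvalues of the Schouten tensor, and then invoke Codazzi-tensor rigidity to globalize the pointwise conclusion.

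\textbf{Step 1 (algebraic reduction).} I would first use that for $n\ge 4$, vanishing of the Weyl tensor gives $R = g\cdot A$ with $A$ the Schouten tensor, so by commutativity of the exterior product on even double forms $R^2 = g^2 A^2$ and hence $g^{r-4}R^2 = g^{r-2}A^2$. Diagonalizing $A$ at a point in an orthonormal frame with eigenvalues $a_1,\ldots,a_n$ and expanding in the basis $\{e^I\otimes e^I\}_{|I|=r}$ of diagonal $(r,r)$-double forms, a direct computation yields
\[
g^{r-2}A^2 = C\sum_{|I|=r}\sigma_2(a|_I)\,e^I\otimes e^I, \qquad \star(g^{r-2}A^2) = C\sum_{|I|=r}\sigma_2(a|_{I^c})\,e^I\otimes e^I,
\]
where $\sigma_2(a|_I) = \sum_{i<j,\,i,j\in I} a_i a_j$ and $C>0$ is a universal constant. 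The $4$-Thorpe condition then reduces to the pointwise identity $\sigma_2(a|_I) = \sigma_2(a|_{I^c})$ for every $r$-subset $I\subset\{1,\ldots,n\}$; call this condition $(\ast)$.

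\textbf{Step 2 (pointwise algebra).} I would then analyze $(\ast)$: setting $T = \sum_i a_i$ and using $(\sum_{i\in I} a_i)^2 = 2\sigma_2(a|_I) + \sum_{i\in I} a_i^2$, $(\ast)$ is equivalent to $\sum_{i\in I}(a_i^2 - Ta_i)$ being independent of $I$. Swapping a single index in and out of $I$ (possible since $r\ge 4$) forces $b_i := a_i^2 - Ta_i$ to be independent of $i$; hence each $a_i$ is a root of the same quadratic $x^2 - Tx - c = 0$, and at every point $A$ has at most two distinct eigenvalues $\alpha,\beta$. If both occur with multiplicities $p,q$ ($p+q = n = 2r$), Vieta combined with $T = p\alpha + q\beta$ yields the Diophantine constraint $(p-1)\alpha + (q-1)\beta = 0$.

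\textbf{Step 3 (Codazzi rigidity and case analysis).} Next I would use that the LCF hypothesis implies $A$ is a Codazzi tensor (vanishing Cotton tensor). If $A\propto g$ everywhere, testing $\nabla_X A(Y) = \nabla_Y A(X)$ against orthogonal $X,Y$ forces the proportionality factor to be constant, placing $(M,g)$ in the space-form family $\mathbb{S}^n, \mathbb{R}^n, \mathbb{H}^n$. In the generic case $p,q \ge 2$, the classical Codazzi rigidity theorem gives locally constant eigenvalues and smooth, integrable, parallel eigendistributions $V_\alpha, V_\beta$; the de Rham theorem then splits $M$ locally as a Riemannian product $N^p_\alpha \times N^q_\beta$ of space forms of constant sectional curvatures $2\alpha, 2\beta$. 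But in a Riemannian product mixed sectional curvatures vanish, whereas $R = g\cdot A$ assigns them the value $\alpha+\beta$; therefore $\alpha + \beta = 0$, and the Diophantine constraint collapses to $p = q = r$, giving the local model $\mathbb{S}^r(c) \times \mathbb{H}^r(-c)$. In the degenerate case $p=1$ (symmetrically $q=1$), the constraint forces $\beta = 0$, so $A$ has constant rank one; the warped-product structure generated by such an $A$ is incompatible with the curvature pattern prescribed by $R = g\cdot A$ unless $\alpha = 0$, reducing to $M$ flat. Standard connectedness and completeness arguments then lift the local models to the universal cover, and the converse direction is a direct verification of $(\ast)$ on each listed model.

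\textbf{Main obstacle.} The hard part will be Step 3 in the generic two-eigenvalue case: bridging from pointwise algebra to parallel eigendistributions requires the classical Codazzi-rigidity theorem together with careful handling of possible jumps in eigenvalue multiplicity along lower-dimensional loci. The degenerate rank-one case similarly relies on the warped-product decomposition theorem for rank-one Codazzi tensors.
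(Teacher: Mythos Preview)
Your overall route matches the paper's: reduce the $4$-Thorpe condition on a locally conformally flat manifold to a quadratic constraint on the Schouten tensor $A$ (hence at most two eigenvalues $\alpha,\beta$ satisfying $(p-1)\alpha+(q-1)\beta=0$), then invoke Codazzi-tensor structure theory. Your eigenvalue/$\sigma_2$ computation in Steps~1--2 is a correct and more elementary substitute for the paper's double-forms identity $A\circ A-(\cc A)A=\tfrac{\|A\|^2-(\cc A)^2}{n}\,g$ (Corollary~\ref{cor:explicit-criteria}); both yield the same pointwise quadratic $a_i^2 - Ta_i = \text{const}$.

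The imprecision is in Step~3. There is no single ``classical Codazzi rigidity theorem'' that delivers \emph{locally constant eigenvalues and parallel eigendistributions} in one stroke. What Derdzinski's analysis gives for a two-eigenvalue Codazzi tensor with multiplicities $\ge 2$ is that the eigendistributions are integrable with totally \emph{umbilical} (not totally geodesic) leaves, and that each eigenvalue is constant along the leaves of the \emph{opposite} distribution. Global constancy of $\alpha,\beta$ must then be extracted by feeding your Diophantine relation $(p-1)\alpha+(q-1)\beta=0$ back into this: it forces $\alpha$ and $\beta$ to be constant along \emph{both} distributions, hence constant; only then does the Codazzi equation upgrade umbilical to totally geodesic and give the de~Rham splitting. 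The multiplicity-one case and possible jumps in multiplicity across lower-dimensional strata also require separate treatment (this is where Merton's work enters). The paper does not redo any of this but simply cites Mari\~no-Villar \cite{Marino}, who package the Derdzinski--Merton machinery; you should either do the same or spell out the missing steps explicitly rather than attribute them to a nonexistent classical theorem.
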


Furthermore, we show that the Riemannian product \(\mathbb{S}^r(c)\times\mathbb{H}^r(-c)\) is \(4\)-Thorpe but not \(4\)-anti-Thorpe. However, it is  \(6\)-anti-Thorpe for all \(r\geq 6\).

\subsection{Weakly \((2k)\)-Einstein Metrics}

When \(n>4k\), \((2k)\)-Thorpe and \((2k)\)-anti-Thorpe metrics are not automatically critical for \(G_{2k}\). Instead, we characterize when they become critical.

\begin{proposition}\label{prop-j}
Let \(n\) be even and \(n>4k\). A \((2k)\)-Thorpe (resp.\ \((2k)\)-anti-Thorpe) metric is a critical point of \(G_{2k}\) restricted to unit volume metrics if and only if
\begin{equation}\label{Weak-Einstein}
\frac{1}{(2k-1)!}\,\mathbf{c}^{2k-1}(R^k\circ R^k)=\frac{2k}{n}\|R^k\|^2\,g.
\end{equation}
\end{proposition}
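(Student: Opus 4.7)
The plan is to combine the general first variation formula for $G_{2k}$ with the harmonicity result of Proposition~\ref{prop-f}, and then to reduce the resulting algebraic Euler--Lagrange equation to (\ref{Weak-Einstein}) via the generalized Lanczos identity. Using the variational framework developed earlier in the paper (the two key lemmas and the first variation formula for $G_{2k}$), I would write
\[
\delta G_{2k}(h) = \int_M \langle E_{2k}, h\rangle\,\mathrm{dvol}_g,
\]
and decompose the Euler--Lagrange tensor as $E_{2k}=E_{2k}^{\mathrm{alg}}+E_{2k}^{\mathrm{div}}$. The algebraic part $E_{2k}^{\mathrm{alg}}$ is polynomial in $R$ (a combination of $\mathbf{c}^{2k-1}(R^k \circ R^k)$, $\mathbf{c}^{4k-1}(R^{2k})$ and $\|R^k\|^2 g$), while $E_{2k}^{\mathrm{div}}$ gathers all terms involving covariant derivatives of $R^k$ (second order $\delta\delta$-type expressions produced by integrating the variation $\dot R^k$ against $h$ twice by parts). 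Under the unit volume constraint, the Lagrange multiplier rule turns criticality into the equation $E_{2k}=c\,g$ for a constant $c$ determined by trace.

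Next I would invoke Proposition~\ref{prop-f}: on a $(2k)$-Thorpe or $(2k)$-anti-Thorpe manifold the double form $R^k$ is harmonic, so $\delta R^k=0$ and consequently $E_{2k}^{\mathrm{div}}=0$. The criticality equation then collapses to the purely algebraic identity $E_{2k}^{\mathrm{alg}}=c\,g$. At this point the generalized Lanczos identity established in the technical core of the paper intervenes: it provides the precise algebraic relation between $\mathbf{c}^{4k-1}(R^{2k})$ and $\mathbf{c}^{2k-1}(R^k \circ R^k)$, up to an explicit $\|R^k\|^2 g$ correction, and thus rewrites the algebraic Euler--Lagrange tensor in the compact form
\[
E_{2k}^{\mathrm{alg}} = A\cdot\frac{1}{(2k-1)!}\,\mathbf{c}^{2k-1}(R^k \circ R^k) + B\,\|R^k\|^2\,g,
\]
for explicit nonzero constants $A,B$ depending only on $n$ and $k$.

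Finally, taking the trace of $E_{2k}^{\mathrm{alg}}=c\,g$ and using the standard double form identity $\mathbf{c}^{2k}(R^k \circ R^k)=(2k)!\,\|R^k\|^2$ (equivalently $\tr\mathbf{c}^{2k-1}(R^k \circ R^k)=(2k)!\,\|R^k\|^2$) determines $c$ as a multiple of $\|R^k\|^2$; substituting back yields precisely (\ref{Weak-Einstein}). The converse is immediate by reversing these steps: if a Thorpe/anti-Thorpe metric satisfies (\ref{Weak-Einstein}) then $E_{2k}^{\mathrm{alg}}$ is a multiple of $g$, and since $E_{2k}^{\mathrm{div}}=0$ by harmonicity, the full tensor $E_{2k}$ is proportional to $g$, so $g$ is critical for $G_{2k}$ on the unit volume slice. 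The main obstacle I anticipate is the careful bookkeeping of factorials and dimensional coefficients required to produce $E_{2k}^{\mathrm{alg}}$ from the first variation and then to invoke the Lanczos identity with the correct constants; the coefficient $\tfrac{2k}{n}$ in (\ref{Weak-Einstein}) is however pinned down by the trace consistency check above, which provides an independent sanity verification.
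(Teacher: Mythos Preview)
Your overall strategy---compute the gradient of $G_{2k}$, use Proposition~\ref{prop-f} to kill the divergence terms via harmonicity, and then determine the constant by tracing---is exactly what the paper does, and the trace identity $\cc^{2k}(R^k\circ R^k)=(2k)!\|R^k\|^2$ pins down $\tfrac{2k}{n}$ just as you say.

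However, you introduce an unnecessary and misleading detour. The algebraic part of the gradient does \emph{not} contain a $\cc^{4k-1}(R^{2k})$ term: the first variation of $G_{2k}$ (Proposition~\ref{prop:grad-G2k} in the paper) gives directly
\[
\nabla G_{2k}=\tfrac12\|R^k\|^2 g-\tfrac{1}{(2k-1)!}\cc^{2k-1}(R^k\circ R^k)-k\,\iota_{R^{k-1}}(\delta\tilde\delta R^k),
\]
so once harmonicity kills the last term, the Euler--Lagrange tensor is already in the form $A\,\cc^{2k-1}(R^k\circ R^k)+B\|R^k\|^2 g$ with $A=-\tfrac{1}{(2k-1)!}$, $B=\tfrac12$. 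The generalized Lanczos identity is nowhere needed here. In the paper that identity serves a different purpose: for \emph{hyper} $(2k)$-Einstein metrics (where $\cc R^k=\lambda g^{2k-1}$) it relates $\cc^{4k-1}(R^{2k})$ to $\cc^{2k-1}(R^k\circ R^k)$ and thereby proves the equivalence of weakly $(2k)$-Einstein and $(4k)$-Einstein (Proposition~\ref{prop-c}). For a general $(2k)$-Thorpe metric the intermediate contractions $\cc^r R^k$ are not simple multiples of $g^{2k-r}$, so the Lanczos identity would not collapse to the clean two-term form you assert; had a $\cc^{4k-1}(R^{2k})$ term actually been present, your plan to eliminate it would not go through. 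Drop that step and your argument coincides with the paper's.
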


Following \cite{EPS}, we call metrics satisfying \eqref{Weak-Einstein} \emph{weakly \((2k)\)-Einstein}. For \(k=1\) this reduces to the notion of weakly Einstein metrics introduced by Euh, Park, and Sekigawa.

\subsection{\((2k)\)-Einstein Metrics}

Another natural class of critical metrics arises from the Hilbert--Lovelock functionals
\[
H_{2k}(g)=\int_M h_{2k}(g)\,\mathrm{dvol}_g,
\]
where \(h_{2k}(g)\) is the \(2k\)-th Gauss--Bonnet curvature (see Definition~\ref{GB-definition}). Critical points of \(H_{2k}\) restricted to unit volume metrics are called \emph{\((2k)\)-Einstein metrics}; they satisfy
\[
\frac{\mathbf{c}^{2k-1}R^k}{(2k-1)!}=\frac{2k}{n}h_{2k}\,g.
\]
For \(k=1\) this is the classical Einstein equation. We prove that every \((2k)\)-Thorpe metric is \((2k)\)-Einstein, and every \((2k)\)-Einstein metric has constant \(h_{2k}\). Moreover, we establish two further properties:

\begin{proposition}\label{prop-d}
In dimensions \(n>4\), a Riemannian metric that is both \(2\)-Einstein and \(4\)-Einstein is a critical point of
\[
G_2=\int_M\|R\|^2\,\mathrm{dvol}_g
\]
(restricted to unit volume metrics).
\end{proposition}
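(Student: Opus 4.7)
The plan is to derive the Euler--Lagrange equation for $G_2$ restricted to unit-volume metrics and then show that the two Einstein-type hypotheses collapse it to a multiple of $g$. Using the systematic variational framework developed in the paper (specialized to $k=1$, together with the two key lemmas that streamline the first variation), the gradient of $G_2$ takes the schematic form
\begin{equation*}
\nabla G_2 \;=\; \alpha\,\mathbf{c}(R\circ R) \;+\; \beta\,\|R\|^2 g \;+\; (\text{divergence terms in } R),
\end{equation*}
where $\alpha,\beta$ are explicit dimensional constants. Criticality under the unit-volume constraint is the condition that $\nabla G_2$ be a constant multiple of $g$.

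The first step is to eliminate the divergence terms. The $2$-Einstein (i.e.\ classical Einstein) hypothesis $\mathbf{c} R = \tfrac{2}{n}h_2\,g$ combined with the twice-contracted second Bianchi identity forces $R$ to be a harmonic double form, so every divergence-type contribution in $\nabla G_2$ vanishes. What remains is a purely algebraic identity in the two tensors $\mathbf{c}(R\circ R)$ and $\|R\|^2 g$ that must be verified.

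The second step is to invoke the generalized Lanczos identity for $k=1$ established earlier in the paper, which provides an explicit linear relation between $\mathbf{c}(R\circ R)$ and $\mathbf{c}^3(R^2)\,g$ modulo lower-order contractions built from $\mathbf{c} R$, $(\mathbf{c} R)^2$, and the metric. Under the Einstein hypothesis each of these lower-order contractions collapses into a multiple of $g$. Finally, the $4$-Einstein hypothesis $\tfrac{1}{6}\mathbf{c}^3(R^2) = \tfrac{4}{n}h_4\,g$ turns $\mathbf{c}^3(R^2)$ itself into a multiple of $g$, so after substitution the reduced gradient $\nabla G_2$ is a multiple of $g$, which is exactly the desired criticality condition.

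The main obstacle I expect is the numerical bookkeeping: the variational formula, the Lanczos identity, and the normalizations of $h_2$, $h_4$ and the operators $\mathbf{c}^j$ each carry combinatorial factors, and these must cancel cleanly once the two Einstein hypotheses are inserted. Once the constants are tracked correctly, the conclusion is immediate. Conceptually, Proposition~\ref{prop-d} is the $k=1$ instance of the general equivalence announced in the introduction, namely that a hyper-$(2k)$-Einstein metric is critical for $G_{2k}$ if and only if it is $(4k)$-Einstein, and the proof above is precisely that equivalence specialized to $k=1$, with the hypothesis $n>4$ ensuring that $h_4$ is not a topological invariant and that the variational formula does not degenerate.
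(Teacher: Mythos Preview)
Your proposal is correct and follows essentially the same route as the paper: compute the gradient of $G_2$ via the two variation lemmas, kill the divergence term using harmonicity of $R$ under the Einstein hypothesis, and then use the $p=2$ Lanczos-type identity to express $\mathbf{c}(R\circ R)$ in terms of $\mathbf{c}^3 R^2$ plus Ricci terms, all of which become multiples of $g$ once the $2$- and $4$-Einstein conditions are imposed. The only point you gloss over is that criticality requires a \emph{constant} multiple of $g$; the paper handles this by invoking the Avez formula to show $\|R\|^2$ is constant (and constancy of $h_4$ for a $4$-Einstein metric does the rest), so be sure to make that step explicit when you write it up.
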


\begin{proposition}\label{prop-z}
Let \((M,g)\) be a locally conformally flat Riemannian manifold of even dimension \(n\geq 6\). Then \((M,g)\) is \(4\)-Einstein if and only if it is \(4\)-Thorpe.
\end{proposition}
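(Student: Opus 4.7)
The forward implication $4$-Thorpe $\Rightarrow$ $4$-Einstein holds in all dimensions without the LCF hypothesis, as every $(2k)$-Thorpe metric is $(2k)$-Einstein by a general result established earlier in the paper. The converse is the substantive part and is where local conformal flatness is used. Under LCF the Weyl tensor vanishes, so in double form notation $R = g \cdot A$, where $A$ is the Schouten tensor viewed as a $(1,1)$ double form. Exterior squaring gives $R^2 = g^2 \cdot A^2$, and both conditions of the proposition then reduce to purely algebraic statements about $A$.

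To handle the $4$-Einstein side, I would iterate the basic Lanczos-type identity $\mathbf{c}(g\omega) = g\,\mathbf{c}(\omega) + (n-2p)\omega$ for $\omega \in \Lambda^{(p,p)}$ applied to $g^2 A^2$, obtaining
\[
\mathbf{c}^3(g^2 A^2) = \alpha_n\,\mathbf{c}^2(A^2)\,g + \beta_n\,\mathbf{c}(A^2),
\]
with explicit coefficients and $\beta_n \neq 0$ for $n \geq 6$. Requiring this to be proportional to $g$, as demanded by the $4$-Einstein equation $\mathbf{c}^3 R^2 = (24/n)\,h_4\,g$, forces $\mathbf{c}(A^2) = \lambda g$ for some scalar $\lambda$; that is, $A^2$ is generalized $1$-Einstein as a $(2,2)$ double form. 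In a frame diagonalizing $A$, this amounts to all Schouten eigenvalues $\lambda_i$ satisfying the same quadratic $\lambda_i(\operatorname{tr}A - \lambda_i) = \lambda$, so $A$ has at most two distinct eigenvalues.

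Next, I would treat the $4$-Thorpe condition $\star(g^{r-4}R^2) = g^{r-4}R^2$ (with $n=2r$), which under LCF becomes $\star(g^{r-2}A^2) = g^{r-2}A^2$. Using the standard expression for $\star(g^{r-p}\omega)$ as $g^{r-p}$ times an algebraic operator in $g$ and $\mathbf{c}$ applied to $\omega \in \Lambda^{(p,p)}$, the self-duality condition reduces to a linear relation among $A^2$, $g\cdot\mathbf{c}(A^2)$, and $\mathbf{c}^2(A^2)\,g^2$. After algebraic simplification this also becomes $\mathbf{c}(A^2) \propto g$, matching the condition extracted from $4$-Einstein and closing the equivalence.

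The main technical obstacle is the explicit evaluation of the double Hodge star on $g^{r-2}A^2$ and the careful bookkeeping of combinatorial constants, so that both reductions produce exactly the same constraint on $A$ (not merely mutually implied up to nonzero factors). As a consistency check, the resulting two-eigenvalue rigidity of $A$ matches precisely the classification in Proposition~\ref{prop-i} of LCF $4$-Thorpe manifolds as space forms or products $\mathbb{S}^r(c) \times \mathbb{H}^r(-c)$, providing independent verification that the two curvature conditions cut out the same class of metrics.
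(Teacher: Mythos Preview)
Your approach is essentially the same as the paper's: under LCF write $R=gA$, reduce the $4$-Einstein condition $\cc^{3}(g^{2}A^{2})\propto g$ via the commutation formula $\cc(g\omega)=g\,\cc\omega+(n-2p)\omega$ to $\cc A^{2}\propto g$, and reduce the $4$-Thorpe self-duality $\star(g^{r-2}A^{2})=g^{r-2}A^{2}$ via the standard expansion of $\star(g^{r-p}\omega)$ to the same condition $\cc A^{2}=\tfrac{\cc^{2}A^{2}}{n}g$. The paper carries out exactly these two reductions (with the explicit coefficients $6(n-3)(n-4)$ and $6(n-3)$ in the first step), and handles the boundary case $n=6$ separately by noting that in dimension $2k+2$ the extended definition of $(2k)$-Thorpe coincides with $(2k)$-Einstein; your $\star(g^{r-4}R^{2})$ formulation tacitly assumes $r\geq 4$, so you should add that one-line remark for $n=6$.
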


\subsection{Hyper \((2k)\)-Einstein Metrics}

A Riemannian manifold \((M,g)\) is called \emph{hyper \((2k)\)-Einstein} (for \(2k<n\)) if the first contraction satisfies \(\mathbf{c}R^k=\lambda g^{2k-1}\) for some function \(\lambda\). This condition generalizes the Einstein condition (the case \(k=1\)). Hyper \((2k)\)-Einstein metrics are automatically both \((2k)\)-Einstein and \((2k)\)-Thorpe. For such metrics we prove a Schur-type result.

\begin{proposition}\label{prop-a}
Let \((M,g)\) be a hyper \((2k)\)-Einstein manifold of dimension \(n>2k\). If \eqref{Weak-Einstein} holds, then \(\|R^k\|\) is constant on \(M\).
\end{proposition}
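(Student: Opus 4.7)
The plan is to apply the codifferential $\delta$ to the weakly $(2k)$-Einstein equation \eqref{Weak-Einstein} and exploit the hyper $(2k)$-Einstein hypothesis to reduce both sides to scalar multiples of $d\|R^k\|^2$; comparing the two multiples then forces $d\|R^k\|^2=0$ when $n>2$, which is ensured by $n>2k\geq 2$.

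First I would show that the function $\lambda$ in $\mathbf{c}R^k=\lambda g^{2k-1}$ is constant. Since hyper $(2k)$-Einstein implies $(2k)$-Einstein and every $(2k)$-Einstein metric has constant $h_{2k}$ (both facts recorded in the text), it suffices to relate $\lambda$ to $h_{2k}$. Iterating $\mathbf{c}(g^p)=p(n-p+1)g^{p-1}$ a total of $2k-2$ times yields
\[
\frac{\mathbf{c}^{2k-1}R^k}{(2k-1)!}=\lambda\,\frac{(n-1)!}{(n-2k+1)!}\,g,
\]
and comparison with the $(2k)$-Einstein identity $\mathbf{c}^{2k-1}R^k/(2k-1)!=(2k/n)h_{2k}\,g$ yields $h_{2k}=\bigl[\tfrac{n}{2k}\tfrac{(n-1)!}{(n-2k+1)!}\bigr]\lambda$, a relation with nonzero prefactor under $n>2k$, so $\lambda$ is constant. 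The generalized second Bianchi identity for the Gauss--Kronecker tensor $R^k$ expresses $\delta R^k$ as a first-order differential operator applied to $\mathbf{c}R^k$; since $\mathbf{c}R^k=\lambda g^{2k-1}$ with $\lambda$ now constant and $g^{2k-1}$ parallel, this operator vanishes, giving $\delta R^k=0$.

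Next I would apply $\delta$ to both sides of \eqref{Weak-Einstein}. The right-hand side gives a scalar multiple of $d\|R^k\|^2$, specifically $\tfrac{2k}{n}\,d\|R^k\|^2$, from the elementary fact that the divergence of $fg$ is proportional to $df$. For the left-hand side, commuting $\delta$ past $\mathbf{c}^{2k-1}$ produces commutator terms proportional to $\delta R^k$ (which vanish by the previous step); the Leibniz rule applied to the composition $R^k\circ R^k$ then yields two equal terms where one factor of $R^k$ is differentiated, and the differential Bianchi identity collapses each such contraction $\nabla R^k\cdot R^k$ to half the gradient of the full trace. Using the normalization $\mathbf{c}^{2k}(R^k\circ R^k)=(2k)!\,\|R^k\|^2$, this produces
\[
\delta\!\left(\frac{1}{(2k-1)!}\mathbf{c}^{2k-1}(R^k\circ R^k)\right)=\frac{(2k)!}{2(2k-1)!}\,d\|R^k\|^2=k\,d\|R^k\|^2.
\]
Equating the two sides gives $k(1-2/n)\,d\|R^k\|^2=0$, so $d\|R^k\|^2=0$ when $n>2$, and thus $\|R^k\|$ is constant.

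The hard part will be the Bianchi bookkeeping in the last step: rigorously verifying, under the hypothesis $\delta R^k=0$, that the divergence of $\tfrac{1}{(2k-1)!}\mathbf{c}^{2k-1}(R^k\circ R^k)$ equals exactly $\tfrac{1}{2}$ the gradient of its trace. For $k=1$ this is the classical identity $\nabla^j(R_{iabc}R_j{}^{abc})=\tfrac{1}{2}\nabla_i(R_{abcd}R^{abcd})$ valid for Einstein metrics; the extension to general $k$ proceeds by the same mechanism applied to $R^k$ viewed as a symmetric $(2k,2k)$ double form, combined with the pair symmetry of $R^k$ and the vanishing $\delta R^k=0$.
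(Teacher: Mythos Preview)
Your overall strategy---take the divergence of both sides of \eqref{Weak-Einstein} and compare the resulting multiples of $d\|R^k\|^2$---is exactly the paper's approach. The preliminary steps (constancy of $\lambda$ via $h_{2k}$, and then $\delta R^k=0$) are fine. The problem is the coefficient in your ``Bianchi bookkeeping''.

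Your ``classical identity'' for $k=1$ is misstated: for an Einstein metric one has
\[
\nabla^{j}\bigl(R_{iabc}R_{j}{}^{abc}\bigr)=\tfrac{1}{4}\,\nabla_{i}\bigl(R_{abcd}R^{abcd}\bigr),
\]
not $\tfrac{1}{2}$. (Expand $\nabla_i|R|^2$ via the second Bianchi identity; the cyclic sum produces \emph{two} copies of $(\nabla^{j}R_{iabc})R_{j}{}^{abc}$, giving a factor $4$ rather than $2$.) Translating to the paper's double-form norm $\|R\|^2=\tfrac14 R_{abcd}R^{abcd}$ and using $\cc(R\circ R)=2\check R$, this yields $\delta\bigl(\cc(R\circ R)\bigr)=\tfrac12\,d\|R\|^2$, so your heuristic ``divergence $=\tfrac12$ gradient of trace'' fails already here: the trace of $\cc(R\circ R)$ is $2\|R\|^2$, and the divergence is only a \emph{quarter} of $d(\mathrm{trace})$. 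For general $k$ the second Bianchi identity for $R^k$ cycles over $2k+1$ indices, not three, and the analogous computation gives
\[
\delta\Bigl(\tfrac{1}{(2k-1)!}\cc^{2k-1}(R^k\circ R^k)\Bigr)=\tfrac12\,d\|R^k\|^2,
\]
not $k\,d\|R^k\|^2$. Equating with the right-hand side then forces $\bigl(\tfrac12-\tfrac{2k}{n}\bigr)d\|R^k\|^2=0$, so the argument needs $n\neq 4k$, not merely $n>2$.

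The paper does not attempt the index-by-index Bianchi computation you outline. Instead it uses the generalized Lanczos identity (Corollary~\ref{cor:hyper-identity}) to write
\[
(-1)^{2k}\ast\Bigl(\tfrac{g^{n-4k-1}R^{2k}}{(n-4k-1)!}\Bigr)
   =\|R^k\|^2 g-2\,\tfrac{\cc^{2k-1}(R^k\circ R^k)}{(2k-1)!}+\lambda^2 c(n,k)\,g,
\]
and then invokes the divergence-freeness of the Lovelock-type tensor on the left (a consequence of $D R^{2k}=0$) to read off $\delta\bigl(\tfrac{1}{(2k-1)!}\cc^{2k-1}(R^k\circ R^k)\bigr)=\tfrac12\,d\|R^k\|^2$ directly. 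This bypasses the delicate combinatorics you flag as ``the hard part'', at the cost of needing the Hodge-star machinery and the hyper-Einstein hypothesis to kill the extra terms in the Lanczos identity.
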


A key technical tool in our analysis is a generalization of the classical Lanczos identity (which relates curvature invariants in dimension four) to higher dimensions. The following algebraic identity holds for symmetric double forms.

\begin{theorem}\label{thma}
Let \(\omega\) be a symmetric \((p,p)\) double form on an \(n\)-dimensional Euclidean space \((V,g)\) with \(n\geq 2p\geq 4\). Then
\begin{equation}\label{gen-ident}
\begin{split}
\frac12\frac{\mathbf{c}^{2p-1}(\omega^2)}{(2p-1)!}
&=(-1)^p\frac{\mathbf{c}^{p-1}(\omega\circ\omega)}{(p-1)!}
   +\frac{\mathbf{c}^{p}\omega}{p!}\frac{\mathbf{c}^{p-1}\omega}{(p-1)!} \\
&\quad+\sum_{r=1}^{p-1}(-1)^{r+p}\Bigl[
      \frac1{(r-1)!}\mathbf{c}^{r-1}\!\bigl(\iota_{\frac{\mathbf{c}^{r}\omega}{r!}}\omega\bigr)
     +\frac1{(p-r-1)!}\mathbf{c}^{p-r-1}\!\Bigl(\frac{\mathbf{c}^{r}\omega}{r!}
       \circ\frac{\mathbf{c}^{r}\omega}{r!}\Bigr)
   \Bigr].
\end{split}
\end{equation}
In the critical dimension \(n=2p\) the identity simplifies further (see Theorem~\ref{thma} for the precise statement).
\end{theorem}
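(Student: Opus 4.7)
The plan is to derive \eqref{gen-ident} by iterating the fundamental Leibniz-type rule for the generalized Ricci contraction $\mathbf{c}$ applied to an exterior product of double forms. In the double form formalism, $\mathbf{c}$ behaves as a derivation of the wedge product \emph{up to} correction terms involving the composition $\circ$ and the interior product $\iota$; this is the content of one of the two technical lemmas highlighted in the abstract. My starting point would be the basic identity expressing $\mathbf{c}(\alpha\cdot\beta)$ as a signed combination of $\mathbf{c}\alpha\cdot\beta$, $\alpha\cdot\mathbf{c}\beta$, and correction terms of the form $\iota_{\alpha}\beta$ or $\alpha\circ\beta$, with signs dictated by the bidegrees.

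Applying this rule once to $\mathbf{c}(\omega\cdot\omega)$ and then iterating $2p-1$ times produces an expansion in which each summand reflects a binomial-style split: some contractions are absorbed into the first factor of $\omega$, others into the second, with a number of correction terms generated along the way. The symmetry of $\omega$ as a $(p,p)$ symmetric double form ensures that pairs of summands obtained by swapping the roles of the two factors coincide, which is the origin of the factor $\frac12$ on the left-hand side of \eqref{gen-ident}. The extreme case in which all contractions are absorbed into a single factor produces the $(\mathbf{c}^p\omega/p!)(\mathbf{c}^{p-1}\omega/(p-1)!)$ summand; the opposite extreme, in which every step generates a correction, yields the pure composition term $\omega\circ\omega$ with sign $(-1)^p$; the intermediate cases, parametrized by the integer $r$ at which the split into two non-trivial contractions occurs, produce precisely the sum over $r\in\{1,\dots,p-1\}$.

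Two auxiliary ingredients are then needed. First, the second technical lemma of the paper, relating iterated contractions of compositions and of interior products, is used to bring each intermediate summand into the canonical form $\mathbf{c}^{r-1}(\iota_{\mathbf{c}^r\omega/r!}\omega)$ and $\mathbf{c}^{p-r-1}((\mathbf{c}^r\omega/r!)\circ(\mathbf{c}^r\omega/r!))$. Second, the graded-commutativity of the wedge on double forms, together with the reordering of successive contractions past interior products, accounts for the signs $(-1)^{r+p}$; matching the multinomial coefficients produced by the iteration against the correction multiplicities gives exactly the factorials $r!$, $(p-r-1)!$, $(2p-1)!$ appearing in the stated identity.

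The main obstacle is the combinatorial bookkeeping: organizing the many terms produced by the iteration into matching groups, checking that correction terms do not cancel prematurely, and tracing every sign through the reorderings. A secondary subtlety is that the interior product $\iota_{\mathbf{c}^r\omega/r!}\omega$ must be identified unambiguously as a symmetric double form, which uses the symmetry hypothesis on $\omega$ in an essential way. The simplification announced in the critical dimension $n=2p$ will then follow from the fact that on $(p,p)$ double forms the Hodge star identifies $\mathbf{c}^p\omega$, up to an explicit constant, with $\star\omega$, forcing several of the intermediate terms either to vanish or to merge into the two boundary contributions.
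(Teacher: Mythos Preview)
Your proposal rests on a starting point that the paper does not supply and that is not a standard identity: a Leibniz rule of the shape
\[
\mathbf{c}(\alpha\cdot\beta)=\mathbf{c}\alpha\cdot\beta+\alpha\cdot\mathbf{c}\beta+(\text{correction in }\iota\text{ and }\circ).
\]
The ``two key lemmas'' you invoke from the abstract are not of this type at all; they are \emph{variational} statements (Lemmas~\ref{lemma:diff-metric} and~\ref{lemma:diff-curvature}), giving the derivative of the induced inner product and of $R$ under a metric deformation via the operator $F_h$. Neither one says anything about how $\mathbf{c}$ interacts with the exterior product, so your iteration never gets off the ground. Even granting some mixed-contraction rule for $\mathbf{c}(\alpha\cdot\beta)$, the cross terms it produces are not naturally compositions $\circ$ or interior products $\iota$; turning $2p-1$ iterations of such a rule into the precise right-hand side of \eqref{gen-ident} is a substantial combinatorial claim you have not justified. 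Your account of the $n=2p$ simplification is also off: it does not come from identifying $\mathbf{c}^p\omega$ with $\star\omega$, but from the trivial fact that $\omega^2$ is an $(n,n)$ double form, hence a multiple of $g^n$.

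The paper's argument bypasses all of this by a duality trick. One pairs $\frac{\mathbf{c}^{2p-1}(\omega^2)}{(2p-1)!}$ with an arbitrary symmetric $(1,1)$ form $h$, recognises $\frac{g^{2p-1}h}{(2p-1)!}=\tfrac12 F_h\bigl(\frac{g^{2p}}{(2p)!}\bigr)$, and uses that $F_h$ is a self-adjoint derivation to obtain
\[
\Bigl\langle\tfrac{\mathbf{c}^{2p-1}(\omega^2)}{(2p-1)!},h\Bigr\rangle
=\bigl\langle F_h(\omega),\iota_\omega\tfrac{g^{2p}}{(2p)!}\bigr\rangle
=\bigl\langle F_h(\omega),\ast\bigl(\tfrac{g^{n-2p}}{(n-2p)!}\omega\bigr)\bigr\rangle.
\]
The entire sum over $r$ then appears in one stroke from the known Hodge-star expansion $\ast\bigl(\tfrac{g^{n-2p}}{(n-2p)!}\omega\bigr)=\sum_{r=0}^p(-1)^{r+p}\tfrac{g^r}{r!}\tfrac{\mathbf{c}^r\omega}{r!}$, and each term is converted to the stated form using only \eqref{Fh-formula} and adjointness of $\iota$, $\mathbf{c}$, $\circ$. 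No iteration and no combinatorial bookkeeping are needed; the signs $(-1)^{r+p}$ and the factorials come directly from the star formula. If you want to salvage your approach, you would first have to prove an honest product rule for $\mathbf{c}$ whose corrections are expressible through $\circ$ and $\iota$, and then carry out the $2p-1$-fold iteration explicitly; absent that, the argument has a genuine gap at its first step.
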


For hyper \((2k)\)-Einstein metrics Theorem~\ref{thma} yields a concise relation.

\begin{proposition}\label{prop-b}
Let \(n\geq 4k\) and assume \(R^k\) satisfies \(\mathbf{c}R^k=\lambda g^{2k-1}\). Then
\[
\frac{\mathbf{c}^{4k-1}R^{2k}}{(4k-1)!}
   =\frac{\mathbf{c}^{2k-1}(R^k\circ R^k)}{(2k-1)!}+\lambda^2\,c(n,k)\,g,
\]
where \(c(n,k)\) is a constant depending only on \(n\) and \(k\).
\end{proposition}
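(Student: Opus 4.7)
The plan is to specialize the generalized Lanczos identity of Theorem~\ref{thma} (equation~\eqref{gen-ident}) to $\omega = R^k$ and $p = 2k$, and then use the hyper $(2k)$-Einstein hypothesis $\mathbf{c}R^k = \lambda g^{2k-1}$ to collapse every term of that identity, except the leading contraction $\mathbf{c}^{2k-1}(R^k\circ R^k)/(2k-1)!$, to a scalar multiple of $g$. Under this specialization the left-hand side of \eqref{gen-ident} produces (up to a factor of $2$) the quantity $\mathbf{c}^{4k-1}(R^{2k})/(4k-1)!$, the sign $(-1)^{p}$ of the leading right-hand term is $+1$, and every remaining term is polynomial in the iterated contractions $\mathbf{c}^r R^k$ for $r\geq 1$. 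This last observation is the key structural point, since the hypothesis is precisely a statement about these iterated contractions.

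The first computational step is to iterate the hypothesis. Using the elementary identity $\mathbf{c}(g^s) = s(n-s+1)\,g^{s-1}$ for double forms, induction gives
\[
\mathbf{c}^r R^k \;=\; \lambda\,\beta_r(n,k)\,g^{2k-r},\qquad 1\le r\le 2k,
\]
for explicit combinatorial coefficients $\beta_r(n,k)$. Consequently the mixed product term $\frac{\mathbf{c}^{2k}R^k}{(2k)!}\cdot\frac{\mathbf{c}^{2k-1}R^k}{(2k-1)!}$ in \eqref{gen-ident} is immediately $\lambda^2$ times an explicit scalar times $g$.

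Next, each summand in $\sum_{r=1}^{2k-1}(-1)^{r+p}[\,\cdots\,]$ must be handled. For the piece of the form $\frac{1}{(2k-r-1)!}\mathbf{c}^{2k-r-1}\!\bigl(\tfrac{\mathbf{c}^{r}R^k}{r!}\circ\tfrac{\mathbf{c}^{r}R^k}{r!}\bigr)$, the composition of two proportional powers of $g^{2k-r}$ is itself proportional to $g^{2(2k-r)}$, which collapses to a scalar multiple of $g$ upon repeated application of $\mathbf{c}(g^s)=s(n-s+1)g^{s-1}$. For the piece $\frac{1}{(r-1)!}\mathbf{c}^{r-1}\!\bigl(\iota_{\mathbf{c}^{r}R^k/r!}R^k\bigr)$, I use the standard identity relating $\iota_{g^s}$ to $\mathbf{c}^s$ on double forms: $\iota_{g^{2k-r}}R^k$ is a known scalar multiple of $\mathbf{c}^{2k-r}R^k = \lambda\,\beta_{2k-r}(n,k)\,g^{r}$, so this piece likewise collapses, after the outer contractions, to $\lambda^2$ times a scalar multiple of $g$. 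Summing all contributions (with their signs and factorial weights) yields the constant $c(n,k)$, and rearranging delivers the claimed identity.

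The main obstacle is purely combinatorial: one must carefully track the signs $(-1)^{r+2k}$, the nested factorials, the coefficients $\beta_r(n,k)$, and the overall factor of $2$ between the two sides of \eqref{gen-ident}. However, since Proposition~\ref{prop-b} asserts only the \emph{existence} of $c(n,k)$ and not a closed form, it suffices to observe that under the hyper $(2k)$-Einstein hypothesis every term of \eqref{gen-ident} beyond $\mathbf{c}^{2k-1}(R^k\circ R^k)/(2k-1)!$ depends on $R^k$ exclusively through its iterated contractions $\mathbf{c}^r R^k$ with $r\geq 1$, and these are entirely controlled by $\lambda$ and $g$. The identity follows at once from this structural observation combined with the computations above.
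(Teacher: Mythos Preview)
Your approach is essentially identical to the paper's: both specialize the generalized Lanczos identity \eqref{gen-ident} to $\omega=R^k$, $p=2k$, then use the hyper $(2k)$-Einstein hypothesis to compute all iterated contractions $\cc^{r}R^k$ for $r\ge 1$ as explicit multiples of powers of $g$, reducing every term except the leading $\cc^{2k-1}(R^k\circ R^k)/(2k-1)!$ to a multiple of $\lambda^2 g$. The paper's proof is in fact terser than yours, stating only the formula for $\cc^{r}R^k$ and then asserting that substitution and simplification give the result.
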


As a consequence we obtain a striking equivalence for hyper \((2k)\)-Einstein metrics.

\begin{proposition}\label{prop-c}
Let \(n>4k\) and let \(g\) be a hyper \((2k)\)-Einstein metric. Then the following are equivalent:
\begin{itemize}
    \item \(g\) is a critical point of \(G_{2k}\);
    \item \(g\) is a \((4k)\)-Einstein metric;
    \item \(g\) is a weakly \((2k)\)-Einstein metric.
\end{itemize}
\end{proposition}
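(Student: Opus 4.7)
My plan is to derive the three equivalences as direct consequences of Propositions~\ref{prop-j} and \ref{prop-b}, together with two structural properties of hyper $(2k)$-Einstein metrics already recorded in this subsection. The first step is a preliminary observation: since every hyper $(2k)$-Einstein metric is both $(2k)$-Thorpe and $(2k)$-Einstein, the scalar $h_{2k}$ is constant on $M$. Applying $(2k-2)$ additional contractions to the defining identity $\mathbf{c}R^k=\lambda g^{2k-1}$ yields an expression of the form $\mathbf{c}^{2k-1}R^k/(2k-1)!=C(n,k)\,\lambda\,g$ for an explicit nonzero constant $C(n,k)$. Comparing this with the $(2k)$-Einstein identity $\mathbf{c}^{2k-1}R^k/(2k-1)!=(2k/n)h_{2k}\,g$ forces $\lambda$ itself to be constant on $M$.

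Next, because the metric is $(2k)$-Thorpe and $n>4k$, Proposition~\ref{prop-j} applies directly and gives the first equivalence: $g$ is critical for $G_{2k}$ (restricted to unit volume) if and only if $g$ satisfies \eqref{Weak-Einstein}, i.e.\ is weakly $(2k)$-Einstein. No further computation is needed at this stage.

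For the remaining equivalence, I would invoke Proposition~\ref{prop-b}. Since $\lambda$ is constant, the identity there reads
\[
\frac{\mathbf{c}^{4k-1}R^{2k}}{(4k-1)!}-\frac{\mathbf{c}^{2k-1}(R^k\circ R^k)}{(2k-1)!}=\lambda^{2}c(n,k)\,g,
\]
whose right-hand side is a constant multiple of $g$. Hence the left-hand term $\mathbf{c}^{4k-1}R^{2k}/(4k-1)!$ is pointwise proportional to $g$ if and only if $\mathbf{c}^{2k-1}(R^k\circ R^k)/(2k-1)!$ is. These are exactly the $(4k)$-Einstein and weakly $(2k)$-Einstein conditions, respectively. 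Once either proportionality is known, taking the pointwise trace (contraction with $g$) fixes the scalar factors to be precisely $(4k/n)h_{4k}$ and $(2k/n)\|R^{k}\|^{2}$, recovering the named equations on the nose.

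The single delicate point is establishing the constancy of $\lambda$ at the outset; if $\lambda^{2}$ were allowed to vary, the term $\lambda^{2}c(n,k)\,g$ in Proposition~\ref{prop-b} would not be a pure multiple of the metric and the pointwise-proportionality equivalence would fail. Once this is secured via the constancy of $h_{2k}$ for $(2k)$-Einstein metrics, the rest of the argument amounts to mechanical algebraic bookkeeping driven entirely by the two prior propositions.
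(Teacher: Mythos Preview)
Your proof is correct and follows the paper's own route: the $(2k)$-Thorpe property of hyper $(2k)$-Einstein metrics feeds into Proposition~\ref{prop-j} to give critical $\Leftrightarrow$ weakly $(2k)$-Einstein, and Proposition~\ref{prop-b} gives weakly $(2k)$-Einstein $\Leftrightarrow$ $(4k)$-Einstein. One harmless misdiagnosis in your closing commentary: even if $\lambda$ varied, $\lambda^{2}c(n,k)\,g$ is still \emph{pointwise} proportional to $g$, and since both the $(4k)$-Einstein and weakly $(2k)$-Einstein conditions are pointwise-proportionality conditions (the factor being forced by tracing), the equivalence via Proposition~\ref{prop-b} does not actually hinge on constancy of $\lambda$; the paper simply records $\lambda$ constant as a known fact and moves on.
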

\subsection{Variational Tools: Two Key Lemmas}

A crucial technical aspect of our work is the development of efficient tools for computing first variations of curvature functionals within the double forms formalism. We present and apply two lemmas that streamline these calculations significantly.

The first lemma (Lemma~\ref{lemma:diff-metric}) gives a simple formula for the variation of the induced inner product on double forms. For a one-parameter family of metrics $g_t$ with $g_0=g$ and $g_0'=h$, and for any $(p,p)$ double forms $\omega_1,\omega_2$, we have
\[
\tilde{g}_0'(\omega_1,\omega_2)=-\tilde{g}_0\!\bigl(F_h(\omega_1),\omega_2\bigr),
\]
where $F_h$ is the derivation operator $F_h(\omega)=\frac{g^{p-1}h}{(p-1)!}\circ\omega+\omega\circ\frac{g^{p-1}h}{(p-1)!}$.

The second lemma (Lemma~\ref{lemma:diff-curvature}) provides an elegant expression for the linearisation of the Riemann curvature tensor $R$ (viewed as a $(2,2)$ double form) in the direction of $h$:
\[
R'h=-\frac14\bigl(D\tilde D+\tilde D D\bigr)(h)+\frac14 F_h(R),
\]
where $D$ and $\tilde D$ are the second Bianchi sum and its adjoint. This formula was first established in our earlier work \cite{Labbi-variational} and has proven instrumental in variational problems involving curvature functionals.

While Lemma~\ref{lemma:diff-curvature} originates from \cite{Labbi-variational}, Lemma~\ref{lemma:diff-metric} appears here in this general form for the first time. Together, these lemmas allow us to compute gradients of complicated curvature functionals with remarkable efficiency and to obtain compact, geometrically transparent formulas for the corresponding Euler--Lagrange equations. Their systematic application to higher-order Thorpe tensors and generalized Einstein conditions constitutes a significant methodological contribution of this paper.

\subsection{Plan of the Paper}

The paper is organized as follows. Section~2 reviews the necessary background on double forms, including inner products, contractions, the Hodge star operator, and the orthogonal decomposition of double forms. We also prove two key lemmas (Lemmas~\ref{lemma:diff-metric} and \ref{lemma:diff-curvature}) that provide powerful tools for computing first variations of curvature invariants; these lemmas streamline all subsequent variational calculations.

Section~3 is devoted to the proof of Theorem~\ref{thma}, our generalized Lanczos identity, and its corollaries. This identity plays a central role in later sections.

In Section~4 we study \((2k)\)-Einstein metrics, proving Proposition~\ref{prop-d} and other characterizations.

Section~5 introduces \((2k)\)-Thorpe and \((2k)\)-anti-Thorpe manifolds. We prove Propositions~\ref{prop-e}, \ref{prop-f}, \ref{prop-h}, \ref{prop-i}, and~\ref{prop-z}, as well as Corollaries~\ref{corollary-f} and~\ref{corollary-g}.

Finally, Section~6 computes the gradient of the functional \(G_{2k}\) and establishes Propositions~\ref{prop-j}, \ref{prop-b}, and~\ref{prop-c}, completing the proofs of the main results announced in this introduction.

\section{Double Forms: Background Material}\label{Background}

This section recalls the basic definitions and properties of double forms, a formalism that provides a powerful language for handling higher-order curvature tensors and their algebraic manipulations. We follow the notation and conventions established in \cite{Labbi-double-forms,Labbi-Lapl-Lich}.  

Let $(V, g)$ be a real Euclidean vector space of finite dimension $n$. Denote by
\[
\Lambda V^{*} = \bigoplus_{p \geq 0} \Lambda^{p}V^{*}
\]
the exterior algebra of the dual space $V^*$. The \emph{space of exterior double forms} on $V$ is defined as
\[
\mathcal{D}V^*:= \Lambda V^{*}\otimes \Lambda V^{*}
               =\bigoplus_{p,q\geq 0} \mathcal{D}^{p,q}V^*,
\qquad 
\mathcal{D}^{p,q}V^*:= \Lambda^{p}V^{*} \otimes \Lambda^{q}V^{*}.
\]

The space $\mathcal{D}V^*$ carries a natural bi-graded associative algebra structure, called the \emph{double exterior algebra of $V^*$}. The multiplication, called the \emph{exterior product}, is denoted by a dot (often omitted) and is induced by the wedge product on each factor.

\subsection{Inner product of double forms}

The metric $g$ extends naturally to inner products on $\Lambda V^{*}$ and $\mathcal{D} V^{*}=\Lambda V^{*}\otimes \Lambda V^{*}$, both denoted by $\langle\,\cdot\,,\cdot\,\rangle$. Explicitly, if $(e_i)$ is an orthonormal basis of $V$ with dual basis $(e^i)$, then
\[
\{e^{i_1}\wedge\dots\wedge e^{i_k}:i_1<\dots<i_k\}
\]
is declared an orthonormal basis of $\Lambda^k V^{*}$, and
\[
\{e^{i_1}\wedge\dots\wedge e^{i_p}\otimes e^{j_1}\wedge\dots\wedge e^{j_q}:
i_1<\dots<i_p,\;j_1<\dots<j_q\}
\]
is an orthonormal basis of $\mathcal{D}^{p,q} V^{*}$. Different degrees are orthogonal: $\Lambda^k V^{*}\perp \Lambda^l V^{*}$ for $k\neq l$, and $\mathcal{D}^{p,q} V^{*}\perp \mathcal{D}^{r,s} V^{*}$ whenever $p\neq r$ or $q\neq s$.

If $g^p$ denotes the $p$-th exterior power of $g$ in $\mathcal{D} V^{*}$, then
\[
\Bigl\langle \frac{g^p}{p!}, \frac{g^p}{p!}\Bigr\rangle=\binom{n}{p},
\qquad 
\langle g^p, g^r\rangle=0\;\; \text{if}\;\; p\neq r.
\]

\subsection{Interior product, contraction and the Hodge star operator}\label{Hodge}

For $\omega \in \mathcal{D}V^*$, let $\mu_{\omega}$ be the left exterior multiplication by $\omega$:
\[
\mu_{\omega}(\alpha)=\omega.\alpha,\qquad \alpha\in\mathcal{D} V^{*}.
\]

The \emph{interior product} $\iota_{\omega}$ is the adjoint of $\mu_{\omega}$:
\[
\langle \iota_{\omega}\alpha,\beta\rangle:=\langle \alpha,\omega.\beta\rangle,
\qquad \alpha,\beta\in\mathcal{D} V^{*}.
\]

The \emph{contraction map} $\cc$ is the adjoint of left multiplication by the metric $g$, viewed as a $(1,1)$ double form:
\[
\cc\omega:=\iota_g\omega.
\]
We write $\cc^k$ for the $k$-fold composition of $\cc$.

The (double) \emph{Hodge star operator} on double forms is the isomorphism
\[
\str: \mathcal{D} V^{*}\to \mathcal{D} V^{*},\qquad 
\str\omega =\iota_{\omega}\frac{g^n}{n!}.
\]

For a simple double form $\omega=\theta_1\otimes \theta_2$ one has \cite{Labbi-Lapl-Lich}
\[
\str(\theta_1\otimes \theta_2)=\str\theta_1\otimes \str\theta_2,
\]
where the right-hand side uses the ordinary Hodge star on forms. This relation was taken as the definition of $\str$ in \cite{Labbi-double-forms}.\\
We mention for future reference a simple but important cancellation property of the left multiplication map by an exterior power of the metric.

\begin{proposition}[{\cite[Proposition 2.3]{Labbi-double-forms}}]\label{cor:cancellation}
For a $(p,q)$ double form $\omega$ with $p+q+k \leq n$, one has
\[
g^k\omega = 0 \;\Longrightarrow\; \omega = 0.
\]
\end{proposition}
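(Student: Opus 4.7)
The plan is to realize the pair $(g,\cc)$, together with the scalar grading operator $H$ acting on $\mathcal{D}^{p,q}V^*$ as multiplication by $n-p-q$, as an $\mathfrak{sl}_2$-triple, and to reduce the claim to standard finite-dimensional representation theory via a Lefschetz-type decomposition.

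The first step is the commutator identity
\[
[\cc,g]\big|_{\mathcal{D}^{p,q}V^*}=(n-p-q)\,\mathrm{id}.
\]
On a simple double form $\alpha\otimes\beta$, writing $g(\alpha\otimes\beta)=\sum_i(e^i\wedge\alpha)\otimes(e^i\wedge\beta)$ and using $\iota_{e^j}(e^i\wedge\alpha)=\delta_{ij}\alpha-e^i\wedge\iota_{e^j}\alpha$ yields this identity after a short expansion (the contribution $n(\alpha\otimes\beta)$ from the diagonal $i=j$ combines with the two number-operator contributions $-p(\alpha\otimes\beta)$ and $-q(\alpha\otimes\beta)$ and the remaining term reassembles as $g\cc(\alpha\otimes\beta)$). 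Together with $[H,g]=-2g$ and $[H,\cc]=2\cc$ (both immediate from the bidegree shifts), this exhibits $\{g,\cc,H\}$ as an $\mathfrak{sl}_2$-triple in which $\cc$ raises and $g$ lowers the $H$-weight.

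Standard $\mathfrak{sl}_2$-representation theory then decomposes the finite-dimensional space $\mathcal{D}V^*$ into irreducibles, each generated from a primitive element $\omega_0\in\mathcal{D}^{p_0,q_0}V^*$ (i.e., with $\cc\omega_0=0$) of nonnegative highest weight $n-p_0-q_0$, with basis $\omega_0,g\omega_0,\ldots,g^{n-p_0-q_0}\omega_0$ and $g^{n-p_0-q_0+1}\omega_0=0$. In particular, on a nonzero primitive $\omega_0$ the operator $g^j$ is injective exactly when $p_0+q_0+j\le n$. This yields the Lefschetz-type decomposition $\omega=\sum_{r\ge0}g^r\omega_{(r)}$ with $\omega_{(r)}\in\mathcal{D}^{p-r,q-r}V^*$ primitive; the distinct summands sit in pairwise distinct isotypic components, since the irreducible generated by $\omega_{(r)}$ has highest weight $n-p-q+2r$.

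Applying $g^k$ to this decomposition gives $g^k\omega=\sum_r g^{k+r}\omega_{(r)}$, and since the summands still lie in distinct isotypic components, each $g^{k+r}\omega_{(r)}$ must vanish. The injectivity criterion on primitives then forces $\omega_{(r)}=0$ whenever $(p-r)+(q-r)+(k+r)=p+q+k-r\le n$, a condition ensured for every $r\ge0$ by the hypothesis $p+q+k\le n$. Hence every $\omega_{(r)}$ vanishes and $\omega=0$. The main obstacle is the combinatorial bookkeeping needed to secure the commutator identity and to verify that the Lefschetz summands sit in mutually distinct isotypic components; once these algebraic facts are in hand, the conclusion is a direct appeal to the representation theory of $\mathfrak{sl}_2$.
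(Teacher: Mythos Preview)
Your argument is correct. The paper itself does not prove this proposition---it is quoted from \cite{Labbi-double-forms} without proof---so there is no in-paper argument to compare against. For what it is worth, your $\mathfrak{sl}_2$/Lefschetz approach is the standard one behind such cancellation results (cf.\ Kulkarni \cite{Kulkarni}, which also underlies Proposition~\ref{prop:orth-decomp}): the commutator identity $[\cc,g]=(n-p-q)\,\mathrm{id}$ on $\mathcal{D}^{p,q}V^*$ together with finite-dimensional $\mathfrak{sl}_2$ representation theory yields the primitive decomposition, and the hypothesis $p+q+k\le n$ is exactly what guarantees that $g^{k+r}$ remains injective on each primitive component $\omega_{(r)}$.
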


\subsection{Composition product of double forms}

Via the Euclidean structure, $\mathcal{D} V^{*}$ is canonically isomorphic to the space of linear maps $L(\Lambda V, \Lambda V)$. Pulling back the composition of maps endows $\mathcal{D} V^{*}$ with a second associative product, called the \emph{composition product} and denoted by $\circ$ (see \cite{Labbi-Archivum,Labbi-JAUMS,Labbi-Bel}).

\subsection{Orthogonal decomposition of double forms}\label{ortho-decomp}

A central algebraic tool in our work is the orthogonal decomposition of a double form into trace‑free components. A $(p,p)$ double form $\omega$ is called \emph{trace‑free} if $\cc\omega = 0$.

\begin{proposition}[\cite{Kulkarni,Labbi-double-forms}]\label{prop:orth-decomp}
Let $\omega$ be a $(p,p)$ double form on $(V,g)$ and set $k = \min\{p, n-p\}$. 
There exist unique trace‑free $(i,i)$ double forms $\omega_i$ ($i=0,1,\dots,k$) such that
\begin{equation}\label{orth-decomp}
\omega = \sum_{i=0}^k g^{p-i}\omega_i.
\end{equation}
In particular, $\omega = 0$ if and only if $\omega_i = 0$ for all $i$ with $0 \leq i \leq \min\{p, n-p\}$.
\end{proposition}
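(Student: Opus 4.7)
The plan is to prove existence and uniqueness separately, each reducing to elementary manipulations with the contraction $\cc$ and left multiplication by $g$. For existence, I would proceed by induction on $p$, peeling off a trace-free summand of top bidegree at each step. In the range $2p\leq n$, this reduces to finding, for a given $\omega\in\mathcal{D}^{p,p}V^*$, a $(p-1,p-1)$ double form $\alpha$ such that $\omega-g\alpha$ is trace-free; equivalently $\cc(g\alpha)=\cc\omega$. The linear operator
\[
T_p:\mathcal{D}^{p-1,p-1}V^*\longrightarrow\mathcal{D}^{p-1,p-1}V^*,\qquad T_p(\alpha)=\cc(g\alpha),
\]
is self-adjoint and non-negative, since $\langle T_p\alpha,\alpha\rangle=\|g\alpha\|^2$, and its kernel is trivial by the cancellation property (Proposition~\ref{cor:cancellation}) whenever $2(p-1)+1\leq n$. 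Inverting $T_p$ on $\cc\omega$ produces a suitable $\alpha$, and then $\omega_p:=\omega-g\alpha$ is trace-free of bidegree $(p,p)$. Applying the induction hypothesis to $\alpha$ yields the full decomposition. The dual range $2p>n$ is handled by Hodge duality: $\str\omega$ has bidegree $(n-p,n-p)$ lying in the previous range, so the decomposition exists there and can be transported back using the standard intertwining between $\str$, left multiplication by $g$, and $\cc$.

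Uniqueness rests on the following key algebraic identity: for any trace-free $(j,j)$ double form $\alpha$ and integer $k\geq 1$,
\[
\cc(g^k\alpha)=k\,(n-2j-k+1)\,g^{k-1}\alpha.
\]
This follows by induction on $k$ from the elementary commutator relation $\cc(g\eta)-g\,\cc\eta=(n-P-Q)\eta$ on $(P,Q)$-double forms, itself a direct computation in an orthonormal basis. Iterating the displayed identity, $\cc^m(g^k\alpha)=0$ whenever $m>k$. Since $\cc^m=\iota_{g^m}$ is adjoint to left multiplication by $g^m$, this gives, for trace-free $\omega_i$ and $\omega_j$ of bidegrees $(i,i)$ and $(j,j)$ with $i<j$,
\[
\langle g^{p-i}\omega_i,\,g^{p-j}\omega_j\rangle=\langle\omega_i,\,\cc^{p-i}(g^{p-j}\omega_j)\rangle=0,
\]
so the decomposition \eqref{orth-decomp} is automatically an orthogonal direct sum. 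In particular, if $\sum_i g^{p-i}\omega_i=0$ with all $\omega_i$ trace-free, each summand $g^{p-i}\omega_i$ vanishes, and a final invocation of the cancellation property — applicable since $i\leq\min\{p,n-p\}$ forces $2i+(p-i)\leq n$ — forces each $\omega_i=0$.

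The main obstacle I anticipate is the combinatorial bookkeeping required to iterate the commutator relation and extract the closed form $k(n-2j-k+1)$; the rest of the argument is routine linear algebra. A secondary delicate point is the range $2p>n$, where $\mu_g$ fails to be injective on all of $\mathcal{D}^{p-1,p-1}V^*$, so the recursion must be run on the Hodge dual, and one must verify carefully that the resulting pieces still have the required trace-free form after transporting back through $\str$.
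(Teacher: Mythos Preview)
Your argument is correct. The paper, however, does not give its own proof of this proposition: it is quoted from \cite{Kulkarni,Labbi-double-forms}, with only a short commentary afterwards on how the components are recovered. That commentary differs from your route. For $n\geq 2p$ the paper points to explicit closed formulas expressing each $\omega_i$ as a linear combination of the successive contractions $\cc^r\omega$ (\cite[Theorem~3.7]{Labbi-double-forms}), rather than arguing abstractly that $T_p=\cc\circ\mu_g$ is invertible; the explicit formulas buy more (they are used elsewhere in the paper, e.g.\ in Corollary~\ref{cor:vanishing-omega_i}), while your invertibility argument is shorter and avoids combinatorics. For $n<2p$ the paper uses the factorisation $\omega=g^{2p-n}\bar\omega$ with a unique $\bar\omega\in\mathcal{D}^{n-p,n-p}V^*$ (from \cite{Labbi-Balkan}) and then decomposes $\bar\omega$; this is equivalent to your Hodge-star reduction but slightly more direct, since one immediately reads off $\omega_i=\bar\omega_i$ without having to track how $\star$ acts on the pieces $g^{m}\psi_i$ with $\psi_i$ trace-free---precisely the ``secondary delicate point'' you flag. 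Your orthogonality argument for uniqueness, via the iterated commutator identity $\cc(g^k\alpha)=k(n-2j-k+1)g^{k-1}\alpha$ for trace-free $\alpha$, is a clean self-contained addition that also explains the adjective ``orthogonal'' in the name of the decomposition, something the paper relies on throughout but does not re-derive here.
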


When $n\geq 2p$, the components $\omega_i$ are uniquely determined by the successive contractions of $\omega$; explicit formulas are given in \cite[Theorem 3.7]{Labbi-double-forms}. 
For $n<2p$, the representation \eqref{orth-decomp} simplifies: by \cite[Proposition 2.1]{Labbi-Balkan} one has $\omega_k=0$ whenever $n-p<k\leq p$, and there exists a unique $(n-p,n-p)$ double form $\bar{\omega}$ such that $\omega = g^{2p-n}\bar{\omega}$. The remaining components of $\omega$ are then obtained from those of $\bar{\omega}$ via the decomposition for the case $n\geq 2(n-p)$.

Recall that for the Riemann curvature tensor $R$ (a $(2,2)$ double form) in dimension $n \geq 4$, the orthogonal decomposition \eqref{orth-decomp} takes the form
\[
R = \omega_2 + g\omega_1 + g^2\omega_0,
\]
where $\omega_2$ is the Weyl tensor, $\omega_1$ is the traceless Ricci part, and $\omega_0$ is the scalar curvature part. 
In this decomposition, the condition $R = gA$ for some $(1,1)$ form $A$ is equivalent to the vanishing the Weyl tensor tensor, i.e., $\omega_2 \equiv 0$. 
Furthermore, a metric $g$  is Einstein ($\cc R = \lambda g$) if and only if the traceless Ricci part vanishes, i.e., $\omega_1 \equiv 0$.

The following corollary generalizes these facts to higher-order double forms.

\begin{corollary}\label{cor:vanishing-omega_i}
Let $n \geq 2p$ and let $\omega$ be a $(p,p)$ double form with orthogonal decomposition as in \eqref{orth-decomp}. For $0 \leq k \leq p-1$, one has
\[
\cc^k\omega = gA \;\; \text{for some double form } A
\quad\Longleftrightarrow\quad \omega_{p-k} = 0.\]
\end{corollary}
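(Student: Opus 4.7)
The plan is to reduce the claim to the uniqueness clause of Proposition~\ref{prop:orth-decomp} applied to the $(p-k,p-k)$ double form $\cc^k\omega$. Starting from $\omega=\sum_{i=0}^{p} g^{p-i}\omega_i$, I would apply $\cc^k$ term by term, observe that the result is itself the orthogonal decomposition of $\cc^k\omega$, and identify its trace-free top-degree component as a nonzero multiple of $\omega_{p-k}$. Since $\cc^k\omega=gA$ is (by uniqueness) equivalent to the vanishing of that top component, the corollary will follow.

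The computational input is the standard commutation rule $\cc(g\alpha)=(n-p-q)\alpha+g\,\cc\alpha$ for a $(p,q)$ double form $\alpha$. Iterated on a trace-free $(i,i)$ double form $\omega_i$, a straightforward induction gives
\[
\cc^k\bigl(g^{m}\omega_i\bigr)=\frac{m!}{(m-k)!}\prod_{j=0}^{k-1}(n-2i-m+1+j)\,g^{m-k}\omega_i \qquad (0\leq k\leq m),
\]
and the left-hand side vanishes for $k>m$. Taking $m=p-i$ and summing, the terms with $i>p-k$ disappear and one arrives at
\[
\cc^k\omega=\sum_{i=0}^{p-k} C_{i,k}\,g^{p-k-i}\omega_i,\qquad C_{i,k}=\frac{(p-i)!}{(p-i-k)!}\prod_{j=0}^{k-1}(n-p-i+1+j).
\]
Because the $\omega_i$ are still trace-free of bidegree $(i,i)$, this expression is, by inspection, the orthogonal decomposition of the $(p-k,p-k)$ double form $\cc^k\omega$ guaranteed by Proposition~\ref{prop:orth-decomp}.

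By uniqueness of that decomposition, $\cc^k\omega=gA$ for some double form $A$ is equivalent to the vanishing of the trace-free top-degree (no-$g$) component, namely the $i=p-k$ term $C_{p-k,k}\,\omega_{p-k}$. To finish, I would check that the leading coefficient
\[
C_{p-k,k}=k!\prod_{j=0}^{k-1}(n-2p+k+1+j)
\]
is strictly positive under the hypothesis $n\geq 2p$: indeed each factor satisfies $n-2p+k+1+j\geq k+1+j\geq 1$, and the empty product at $k=0$ equals $1$ (recovering the trivial case $\omega=gA\Leftrightarrow \omega_p=0$). The argument is essentially formal once the iterated commutation rule is in hand; the only real care needed is correct bookkeeping of indices and verifying positivity of the leading coefficient in the range $0\leq k\leq p-1$, so there is no serious obstacle.
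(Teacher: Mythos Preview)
Your proposal is correct and follows essentially the same approach as the paper: both identify the orthogonal decomposition of $\cc^k\omega$, observe that its top trace-free component is a nonzero multiple of $\omega_{p-k}$, and invoke uniqueness. The only difference is that the paper cites the ready-made formula $\cc^k\omega=\alpha_{n,k,p}\,\omega_{p-k}+gA$ (with $\alpha_{n,k,p}\neq 0$) from \cite[Formula~(12)]{Labbi-double-forms}, whereas you rederive this formula from the commutation rule $\cc(g\alpha)=(n-p-q)\alpha+g\,\cc\alpha$ and verify positivity of the leading coefficient directly.
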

We remark that for $k=p$, we clearly have $\cc^p\omega=0 \iff \omega_0=0.$
\begin{proof}
Formula (12) in \cite{Labbi-double-forms} gives the orthogonal decomposition of $\cc^k\omega$:
\[
\cc^k\omega = \alpha_{n,k,p}\,\omega_{p-k} + gA,
\]
where $\alpha_{n,k,p} \neq 0$ and $A$ is a $(p-k-1,p-k-1)$ double form. The result follows from the uniqueness of the decomposition.
\end{proof}
\subsection{Two key lemmas for variational calculations}

The following operator, introduced in \cite{Labbi-variational}, greatly simplifies the computation of first variations of curvature invariants. For a symmetric $(1,1)$ double form $h$, define
\[
F_h(k)=h\circ k+k\circ h,\qquad k\in\mathcal{D}^{1,1}V^*,
\]
and extend $F_h$ as a derivation to the whole diagonal subalgebra $\bigoplus_{p\ge0}\mathcal{D}^{p,p}V^*$. For a $(p,p)$ double form $\omega$ one has \cite{Labbi-Lapl-Lich}
\begin{equation}\label{Fh-formula}
F_h(\omega)=\frac{g^{p-1}h}{(p-1)!}\circ\omega
            +\omega\circ\frac{g^{p-1}h}{(p-1)!}.
\end{equation}

The operator $F_h$ is self‑adjoint and acts as a derivation on the exterior algebra of double forms. Its relevance for variational problems is captured by the next lemma.

\begin{lemma}\label{lemma:diff-metric}
Let $g_t$ be a smooth family of inner products on $V$ with $g_0=g$ and $g'_0=h$. Denote by $\tilde{g}_t$ the induced inner product on $(p,p)$ double forms. Then for any $(p,p)$ double forms $\omega_1,\omega_2$,
\[
\tilde{g}'_0(\omega_1,\omega_2)=
-\tilde{g}_0\!\bigl(F_h(\omega_1),\omega_2\bigr).\]
\end{lemma}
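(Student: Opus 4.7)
The plan is to trace the variation of the inner product back to the fundamental level of $V^*$ and then propagate it through the two layers of functoriality that shape the inner product on $\mathcal{D}^{p,p}V^*$, namely the exterior power and the tensor product. In a $g_0$-orthonormal basis $(e_i)$ one has $g_t=g_0+th+O(t^2)$, and inverting gives $g_t^{-1}=g_0^{-1}-th+O(t^2)$ as a bilinear form on $V^*$. Writing $\tilde g_t(\alpha,\beta)=\tilde g_0(A_t\alpha,\beta)$ for the unique family of $\tilde g_0$-self-adjoint operators $A_t\in\mathrm{End}(V^*)$ with $A_0=\mathrm{id}$, a direct computation in components yields $A_0'=-B$, where $B\in\mathrm{End}(V^*)$ is the operator with $(B\alpha)_i=h_{ij}\alpha_j$.

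Next I would transport the base identity to $\mathcal{D}^{p,p}V^*$ by functoriality. The induced inner product on $\Lambda^p V^*$ is represented by $\Lambda^p A_t$, since for multi-indices $I,J$ of length $p$ one has $\tilde g_t(e^I,e^J)=\det((g_t^{-1})^{i_a j_b})=\tilde g_0(\Lambda^p A_t\, e^I,e^J)$. Differentiating at $t=0$ produces the canonical derivation extension of $-B$ on $\Lambda V^*$, which I denote by $-D$. For $\mathcal{D}^{p,p}V^*=\Lambda^p V^*\otimes\Lambda^p V^*$ the inner product is the tensor product one, and the representing family is $\Lambda^p A_t\otimes\Lambda^p A_t$; applying the product rule at $t=0$ yields
\[
\tilde g_0'(\omega_1,\omega_2)=-\tilde g_0\!\bigl((D\otimes\mathrm{id}+\mathrm{id}\otimes D)(\omega_1),\omega_2\bigr).
\]

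It remains to identify the tensor-product derivation $D\otimes\mathrm{id}+\mathrm{id}\otimes D$ with the operator $F_h$ defined in the paper. Both are derivations of the exterior product on the diagonal subalgebra $\bigoplus_p\mathcal{D}^{p,p}V^*$, so each is determined by its restriction to $\mathcal{D}^{1,1}V^*$, which generates this subalgebra. A direct component check on a simple $(1,1)$ form $\alpha\otimes\beta$, using $(h\circ(\alpha\otimes\beta))_{ij}=h_{ik}\alpha_k\beta_j$ and its mirror, shows that $F_h(\alpha\otimes\beta)=(B\alpha)\otimes\beta+\alpha\otimes(B\beta)$, which coincides with the tensor-product derivation. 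The two derivations therefore agree on all of $\bigoplus_p\mathcal{D}^{p,p}V^*$, completing the proof. The main obstacle, in my view, is precisely this final identification: the paper's $F_h$ is defined intrinsically via the composition product, whereas the operator emerging naturally from the variational computation is a tensor-product derivation, and reconciling the two requires carefully invoking the uniqueness of derivations extending a prescribed map on generators.
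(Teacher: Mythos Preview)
Your proof is correct and takes a genuinely different route from the paper's. The paper reduces to simple double forms $\omega_1=\theta_1\otimes\theta_2$, $\omega_2=\theta_3\otimes\theta_4$, passes to $p$-vectors via the time-dependent musical isomorphism $\theta\mapsto\theta^{\sharp_t}$, differentiates the defining relation $\tfrac{g_t^p}{p!}(\theta^{\sharp_t},\cdot)=\theta(\cdot)$, and then compares the result termwise with the explicit closed formula $F_h(\omega)=\tfrac{g^{p-1}h}{(p-1)!}\circ\omega+\omega\circ\tfrac{g^{p-1}h}{(p-1)!}$. Your argument instead encodes the varying inner product as a self-adjoint operator $A_t$ on $V^*$, propagates it through $\Lambda^p$ and the tensor product by functoriality, and identifies the resulting derivation $D\otimes\mathrm{id}+\mathrm{id}\otimes D$ with $F_h$ by checking agreement on the generating piece $\mathcal D^{1,1}V^*$ and invoking uniqueness of derivation extensions. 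The paper's approach is more hands-on and self-contained (it never needs the derivation characterisation of $F_h$, only the explicit formula), while yours is more structural: it explains \emph{why} the operator $F_h$ must appear, and it scales cleanly to other functorial constructions built from $g$. The only point worth tightening in your write-up is the verification that $D\otimes\mathrm{id}+\mathrm{id}\otimes D$ is indeed a derivation of the exterior product of double forms---this follows because $D$ preserves degree on $\Lambda V^*$, so no signs arise, but it deserves one explicit line.
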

\begin{proof}
Without loss of generality, suppose $\omega_1 = \theta_1 \otimes \theta_2$ and $\omega_2 = \theta_3 \otimes \theta_4$. Recall that the exterior power $g^p/p!$ of an inner product $g$ coincides with the standard induced product on $p$-vectors. For a $p$-form $\theta$, let $\theta^{\sharp_t}$ denote the corresponding $p$-vector via the $\sharp$-isomorphism induced by $g_t^p/p!$. Then,
\begin{equation}\label{eq1}
\tilde{g}_t(\omega_1, \omega_2) = \frac{g_t^p}{p!}(\theta_1^{\sharp_t}, \theta_3^{\sharp_t}) \cdot \frac{g_t^p}{p!}(\theta_2^{\sharp_t}, \theta_4^{\sharp_t}).
\end{equation}

Differentiating the identity $\frac{g_t^p}{p!}(\theta_1^{\sharp_t}, \cdot) = \theta_1(\cdot)$ yields
\[
\frac{g^p}{p!}\left(\frac{d}{dt}\Big|_{t=0} \theta_1^{\sharp_t}, \cdot\right) = -\frac{g^{p-1} h}{(p-1)!}(\theta_1^\sharp, \cdot),
\]
where $\sharp = \sharp_0$. Differentiating both sides of Equation \eqref{eq1} gives:
\[
\tilde{g}_0'(\omega_1, \omega_2) = -\frac{g^{p-1} h}{(p-1)!}(\theta_1^\sharp, \theta_3^\sharp) \cdot \frac{g^p}{p!}(\theta_2^\sharp, \theta_4^\sharp) - \frac{g^p}{p!}(\theta_1^\sharp, \theta_3^\sharp) \cdot \frac{g^{p-1} h}{(p-1)!}(\theta_2^\sharp, \theta_4^\sharp).
\]

On the other hand, a direct computation shows:
\begin{align*}
\tilde{g}_0\bigl(F_h(\omega_1), \omega_2\bigr) 
&= \tilde{g}_0\left( \frac{g^{p-1} h}{(p-1)!} \circ (\theta_1 \otimes \theta_2) + (\theta_1 \otimes \theta_2) \circ \frac{g^{p-1} h}{(p-1)!}, \theta_3 \otimes \theta_4 \right) \\
&= \frac{g^{p-1} h}{(p-1)!}(\theta_1^\sharp, \theta_3^\sharp) \cdot \frac{g^p}{p!}(\theta_2^\sharp, \theta_4^\sharp) + \frac{g^p}{p!}(\theta_1^\sharp, \theta_3^\sharp) \cdot \frac{g^{p-1} h}{(p-1)!}(\theta_2^\sharp, \theta_4^\sharp).
\end{align*}
This completes the proof.
\end{proof}

The second lemma, proved in \cite{Labbi-variational}, gives a clean formula for the linearisation of the Riemann curvature tensor as the sum of a differential term and an algebraic term.

\begin{lemma}[\cite{Labbi-variational}]\label{lemma:diff-curvature}
Let $R$ be the Riemann curvature tensor viewed as a symmetric $(2,2)$ double form. Its directional derivative in the direction of a symmetric $(1,1)$ form $h$ is
\[
R'h=-\frac14\bigl(D\tilde D+\tilde D D\bigr)(h)
      +\frac14 F_h(R),
\]
where $D$ and $\tilde D$ denote the second Bianchi sum and its adjoint, respectively.
\end{lemma}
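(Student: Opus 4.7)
The plan is to reduce the identity to a pointwise computation in normal coordinates at a point $p\in M$ and then recognize the resulting coordinate expression inside the double-form formalism. After fixing $g$-normal coordinates at $p$ so that the Christoffel symbols vanish there, consider the variation $g_t=g+th$ and differentiate at $t=0$.

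Starting from the standard formula for the variation of the Christoffel symbols,
\[
(\Gamma')^{m}_{ij}=\tfrac12 g^{ml}\bigl(\nabla_i h_{jl}+\nabla_j h_{il}-\nabla_l h_{ij}\bigr),
\]
together with $\partial_t R_{ijkl}=h_{im}R^{m}{}_{jkl}+g_{im}\bigl(\nabla_k(\Gamma')^{m}_{jl}-\nabla_l(\Gamma')^{m}_{jk}\bigr)$, a routine computation produces a pointwise expression for $(R'h)_{ijkl}$ consisting of (i) four second covariant derivatives of $h$ (a Hessian-type combination) and (ii) a small number of algebraic terms of the form $h\cdot R$. The remainder of the proof consists of packaging (ii) as $\tfrac14 F_h(R)$ and (i) as $-\tfrac14(D\tilde D+\tilde D D)h$.

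For the algebraic part, I would invoke formula~\eqref{Fh-formula} at the $(2,2)$ level, which gives $F_h(R)=(gh)\circ R+R\circ(gh)$, where $gh$ is a $(2,2)$ double form. Viewing $gh$ and $R$ as symmetric endomorphisms of $\Lambda^2V^{*}$ and expanding the composition product in coordinates produces exactly the algebraic terms $h_{im}R^{m}{}_{jkl}$ and their symmetric counterparts appearing in step two, with the combinatorial coefficient $\tfrac14$ arising from the antisymmetrizations built into the double-form wedge and composition products.

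For the differential part, the second Bianchi operator $D$ extends the exterior derivative to a derivation $D\colon \mathcal{D}^{p,q}V^{*}\to\mathcal{D}^{p+1,q}V^{*}$, with $\tilde D$ its $L^{2}$-adjoint. Applied to a symmetric $(1,1)$ form $h$, $Dh$ is the $(2,1)$ double form $(Dh)_{ij,k}=\nabla_i h_{jk}-\nabla_j h_{ik}$, and $\tilde D$ acts as a divergence on the first slot. Computing $D\tilde D h$ and $\tilde D D h$ separately in normal coordinates and summing reproduces precisely the four-term Hessian combination from step two, with sign and coefficient $-\tfrac14$. The main obstacle is the meticulous bookkeeping of antisymmetrizations and combinatorial factors arising from the two different algebraic products on double forms; in particular, the factor $\tfrac14$ must come out consistently from both the algebraic and differential sides, and the sign of the Hessian combination depends on the convention chosen for $\tilde D$. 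Once both identifications are established pointwise at $p$, tensoriality extends the identity globally, yielding the stated formula.
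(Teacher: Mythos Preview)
Your overall strategy---compute the variation in normal coordinates, then match the second-derivative terms and the algebraic $h\!\cdot\! R$ terms to the two pieces of the formula---is the natural one, and it is essentially how the result is obtained in \cite{Labbi-variational} (the present paper does not reprove the lemma; it only cites that reference).

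However, there is a genuine error in your identification of $\tilde D$. You write that $\tilde D$ is the $L^2$-adjoint of $D$ and that it ``acts as a divergence on the first slot.'' In the double-form calculus used here, this is \emph{not} what $\tilde D$ means. The operators $D$ and $\tilde D$ are the two first-order \emph{differential} operators obtained by applying the exterior covariant derivative on the first and second tensor factors of $\Lambda V^{*}\otimes\Lambda V^{*}$, respectively; thus $D\colon\mathcal D^{p,q}\to\mathcal D^{p+1,q}$ and $\tilde D\colon\mathcal D^{p,q}\to\mathcal D^{p,q+1}$. Their $L^2$-adjoints are the separate operators $\delta$ and $\tilde\delta$ (cf.\ the proof of Theorem~\ref{thm:grad-H2k}, where it is stated that $\delta\tilde\delta+\tilde\delta\delta$ is the integral adjoint of $D\tilde D+\tilde D D$). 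The word ``adjoint'' in the lemma's statement refers to the transpose on double forms, $\tilde D\omega=(D\omega^{t})^{t}$, not to the metric adjoint.

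This matters at the level of degrees: with your interpretation, $(D\tilde D+\tilde D D)h$ would be a $(1,1)$ form (a Hodge-type Laplacian of $h$), which cannot be compared with $R'h$ or $F_h(R)$, both of which are $(2,2)$ forms. With the correct interpretation, $Dh$ is the $(2,1)$ form you wrote, $\tilde D h$ is its transpose (a $(1,2)$ form), and $D\tilde D h$, $\tilde D D h$ are both $(2,2)$ forms whose components are exactly the four Hessian terms $\nabla^2_{ik}h_{jl}-\nabla^2_{jk}h_{il}-\nabla^2_{il}h_{jk}+\nabla^2_{jl}h_{ik}$ (up to a Ricci-identity curvature correction that is absorbed into the algebraic part). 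Once you fix this, your bookkeeping plan goes through and the factor $\tfrac14$ falls out correctly.
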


These two lemmas will be used repeatedly in the computation of gradients of the functionals studied in the paper.
\section{Generalized Lanczos identities}

This section is devoted to a generalization of the classical Lanczos identity. We first recall the elementary but important fact that on an even-dimensional Riemannian manifold, the top exterior power of the curvature tensor is essentially a multiple of the volume double form. From this observation we derive a higher‑dimensional analogue of the familiar two‑dimensional identity $\cc R = \frac{\cc^2R}{2}\,g$. We then prove an identity that relates the tensor $\cc^{2k-1}(R^k\circ R^k)$  and $\cc^{4k-1}(R^{2k})$. This identity, which reduces to the classical Lanczos identity in dimension four, is the main result of the section and will play a crucial role in the characterization of weakly $(2k)$-Einstein metrics.

\subsection{The top exterior power in even dimensions}

Let $(M,g)$ be a Riemannian manifold of even dimension $n = 2p$. The $(2p)$-Thorpe tensor $R^p$ is an $(n,n)$‑double form, as is $g^n$. By dimensionality they must be proportional; hence there exists a scalar function $\lambda$ on $M$ such that
\[
R^p = \lambda\,\frac{g^n}{n!}.
\]
Taking the inner product with $g^n/n!$ and using that $\cc = \iota_g$ is the adjoint of the left exterior multiplication by $g$, we obtain
\[
\lambda = \bigl\langle R^p, \tfrac{g^n}{n!}\bigr\rangle = \frac{\cc^{\,n}R^p}{n!}.
\]
Thus
\begin{equation}\label{eq:top-power}
R^p = \frac{\cc^{2p}R^p}{(2p)!}\,\frac{g^n}{n!}.
\end{equation}
Contracting \eqref{eq:top-power} $(2p-1)$ times yields
\begin{equation}\label{eq:Tn=0}
\cc^{2p-1}R^p = \frac{\cc^{2p}R^p}{2p}\;g .
\end{equation}
For $p=1$ this is the familiar two‑dimensional relation $\cc R = \frac{\cc^2R}{2}\,g$, which expresses the vanishing of the Einstein tensor in dimension two. In general, \eqref{eq:Tn=0} is equivalent to the vanishing of the Lovelock tensor
\[
T_{2p}:=\frac{\cc^{2p-1}R^p}{(2p-1)!}-\frac{\cc^{2p}R^p}{(2p)!}\,g
\]
in dimension $n=2p$. These apparently simple identities are in fact linearized versions of the Gauss–Bonnet theorem (see \cite{Labbi-variational,Labbi-Archivum}).

\subsection{Lanczos and Avez identities in dimension four}

In all dimensions $n\ge 4$, we show that the curvature tensor satisfies the identity
\begin{equation}\label{eq:R-identity}
\frac{\cc^3R^2}{3!}=2\cc(R\circ R)+\Scal\cdot\Ric-2\iota_{\Ric}R-2(\Ric\circ\Ric),
\end{equation}
which is equivalent to formula (7.3) in \cite{Patterson}. Contracting once gives Avez' formula \cite{Avez}:
\begin{equation}\label{eq:Avez}
\frac{\cc^4R^2}{4!}= \|R\|^2+\frac14\Scal^2-\|\Ric\|^2.
\end{equation}
Indeed,
\begin{align*}
\cc^2(R\circ R) &= \langle R\circ R,g^2\rangle
                 = \langle R,R\circ g^2\rangle
                 = 2\langle R,R\rangle,\\[2mm]
\cc\bigl(\iota_{\Ric}R\bigr) &= \langle \iota_{\Ric}R,g\rangle
                             = \langle R,\Ric.g\rangle
                             = \langle R,g.\Ric\rangle
                             = \langle \Ric,\Ric\rangle,\\[2mm]
\cc(\Ric\circ\Ric) &= \langle \Ric\circ\Ric,g\rangle
                   = \langle \Ric,\Ric\circ g\rangle
                   = \langle \Ric,\Ric\rangle .
\end{align*}

When $n=4$, equation \eqref{eq:Tn=0} with $p=2$ gives
\[
\frac{\cc^3R^2}{3!}= \Bigl(\frac{\cc^4R^2}{4!}\Bigr)g .
\]
Substituting \eqref{eq:Avez} and \eqref{eq:R-identity} we obtain Lanczos' identity \cite{Lanczos}:
\begin{equation}\label{eq:Lanczos}
\Bigl(\|R\|^2-\|\Ric\|^2+\frac{\Scal^2}{4}\Bigr)g
   = 2\cc(R\circ R)+\Scal\cdot\Ric-2\iota_{\Ric}R-2(\Ric\circ\Ric).
\end{equation}
Lanczos used this identity to show that the quadratic Lagrangian given by the Gauss–Bonnet curvature does not contribute to the equations of motion in four dimensions—an early manifestation of the Gauss–Bonnet theorem in that dimension.

\subsection{A general algebraic identity}

The following theorem extends \eqref{eq:Lanczos} to symmetric double forms of arbitrary even degree. It is the central algebraic result of this section.

\begin{theorem}\label{thm:gen-Lanczos}
Let $\omega$ be a symmetric $(p,p)$ double form on an $n$-dimensional Euclidean space $(V,g)$ with $n\ge 2p\ge 4$. Then
\begin{equation}\label{eq:gen-ident}
\begin{split}
\frac12\frac{\cc^{2p-1}(\omega^2)}{(2p-1)!}
&=(-1)^p\frac{\cc^{p-1}(\omega\circ\omega)}{(p-1)!}
   +\frac{\cc^p\omega}{p!}\,\frac{\cc^{p-1}\omega}{(p-1)!}\\
&\quad+\sum_{r=1}^{p-1}(-1)^{r+p}\Bigl[
      \frac1{(r-1)!}\cc^{r-1}\!\bigl(\iota_{\frac{\cc^{r}\omega}{r!}}\omega\bigr)
     +\frac1{(p-r-1)!}\cc^{p-r-1}\!\Bigl(
        \frac{\cc^{r}\omega}{r!}\circ\frac{\cc^{r}\omega}{r!}\Bigr)
    \Bigr].
\end{split}
\end{equation}
If, in addition, $n=2p$, then
\begin{equation}\label{eq:gen-ident-2p}
\begin{split}
\Bigl(\sum_{r=0}^{p}\frac{(-1)^{r+p}}{2}
      \Bigl\|\frac{\cc^{r}\omega}{r!}\Bigr\|^2\Bigr) g
&=(-1)^p\frac{\cc^{p-1}(\omega\circ\omega)}{(p-1)!}
   +\frac{\cc^p\omega}{p!}\,\frac{\cc^{p-1}\omega}{(p-1)!}\\
&\quad+\sum_{r=1}^{p-1}(-1)^{r+p}\Bigl[
      \frac1{(r-1)!}\cc^{r-1}\!\bigl(\iota_{\frac{\cc^{r}\omega}{r!}}\omega\bigr)
     +\frac1{(p-r-1)!}\cc^{p-r-1}\!\Bigl(
        \frac{\cc^{r}\omega}{r!}\circ\frac{\cc^{r}\omega}{r!}\Bigr)
    \Bigr].
\end{split}
\end{equation}
\end{theorem}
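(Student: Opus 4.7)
The plan is to establish \eqref{eq:gen-ident} by developing and iterating a family of Leibniz-type identities for the contraction operator $\cc=\iota_g$ acting on exterior and composition products of symmetric double forms, and then to deduce \eqref{eq:gen-ident-2p} from \eqref{eq:Tn=0} applied to the top-degree form $\omega^{2}$.

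The first concrete step is to derive the appropriate ``modified Leibniz rules'' for $\cc$ on products of double forms. These rules express $\cc(\alpha\cdot\beta)$ and $\cc(\alpha\circ\beta)$ in terms of contracted factors, together with correction terms involving either the composition product $\circ$ or the interior product $\iota$, whose precise form depends on the bidegrees of $\alpha$ and $\beta$. The failure of $\iota_g$ to be a plain derivation of the exterior algebra of double forms is what produces these corrections, caused precisely by $g$ being a $(1,1)$---rather than a $(1,0)$---form. A sanity check that any proposed rule must pass is the $(2,2)$ case applied to $R^{2}$: after three contractions, the expansion must reproduce formula \eqref{eq:R-identity} of the paper, which is exactly the $p=2$ case of \eqref{eq:gen-ident}.

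With these Leibniz rules in hand, I would expand $\cc^{2p-1}(\omega^{2})$ step by step and organize the output according to the integer $r$ recording how the contractions distribute between the two copies of $\omega$ in $\omega\cdot\omega$. This indexation naturally produces the sum over $r$ on the right-hand side of \eqref{eq:gen-ident}: the extreme values yield the two leading summands $(-1)^{p}\cc^{p-1}(\omega\circ\omega)/(p-1)!$ (all correction terms concentrated on one side) and $(\cc^{p}\omega/p!)(\cc^{p-1}\omega/(p-1)!)$ (no correction terms, each factor contracted separately), while each intermediate $r$ contributes the pair built from $\cc^{r}\omega/r!$. Normalizing contractions as $\cc^{r}/r!$ from the outset absorbs the binomial coefficients produced by the iteration, and the alternating signs $(-1)^{r+p}$ emerge from the parity of correction terms picked up along the way. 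The natural formalization is an induction on $p$---equivalently, on the number of contractions already applied---maintaining at each step an explicit expansion in the normalized basis $\{\cc^{r}\omega/r!\}$.

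For \eqref{eq:gen-ident-2p}, in the critical dimension $n=2p$ the form $\omega^{2}$ has top bidegree $(n,n)$, so identity \eqref{eq:Tn=0} (applied with $\omega^{2}$ in place of $R^{p}$) gives $\cc^{2p-1}(\omega^{2})/(2p-1)! = \bigl(\cc^{2p}(\omega^{2})/(2p)!\bigr)\,g$. Since $\cc^{2p}(\omega^{2})/(2p)! = \langle\omega^{2},g^{n}/n!\rangle = \langle\omega,\str\omega\rangle$, the statement reduces to the scalar combinatorial identity $\langle\omega,\str\omega\rangle = \sum_{r=0}^{p}(-1)^{r+p}\bigl\|\cc^{r}\omega/r!\bigr\|^{2}$, which I would verify by substituting the orthogonal decomposition of Proposition~\ref{prop:orth-decomp} on both sides and using the known action of $\str$ on the pieces $g^{p-i}\omega_{i}$ in dimension $2p$. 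The principal obstacle throughout is the combinatorial bookkeeping: controlling signs, factorials, and the exact form ($\iota$ versus $\circ$) of every intermediate term through $(2p-1)$ contractions. As a fallback, \eqref{eq:gen-ident} itself can be proved by expanding both sides in the orthogonal decomposition of $\omega$ and matching coefficients component by component, reducing the whole statement to a polynomial identity in $n$, $p$, and the trace-free components $\omega_i$.
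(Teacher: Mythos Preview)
Your proposal is not wrong in spirit, but it takes a substantially different---and considerably harder---route than the paper's, and you have not actually supplied the ingredient that would make your route work.

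The paper does \emph{not} iterate contractions on $\omega^{2}$ via Leibniz-type rules. Instead it pairs both sides with an arbitrary symmetric $(1,1)$ form $h$ and uses the operator $F_h$ of Lemma~\ref{lemma:diff-metric}. The chain of adjunctions
\[
\Bigl\langle\tfrac{\cc^{2p-1}(\omega^{2})}{(2p-1)!},h\Bigr\rangle
=\Bigl\langle\omega^{2},\tfrac12F_h\bigl(\tfrac{g^{2p}}{(2p)!}\bigr)\Bigr\rangle
=\Bigl\langle F_h(\omega),\iota_{\omega}\tfrac{g^{2p}}{(2p)!}\Bigr\rangle
=\Bigl\langle F_h(\omega),\ast\bigl(\tfrac{g^{n-2p}}{(n-2p)!}\omega\bigr)\Bigr\rangle
\]
reduces everything to the already-known Hodge star formula
\(\ast\bigl(\tfrac{g^{n-2p}}{(n-2p)!}\omega\bigr)=\sum_{r=0}^{p}(-1)^{r+p}\tfrac{g^{r}}{r!}\tfrac{\cc^{r}\omega}{r!}\)
(formula~(15) of \cite{Labbi-double-forms}). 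This single identity delivers the indexing by $r$, the signs $(-1)^{r+p}$, and the normalizations $\cc^{r}\omega/r!$ in one stroke; the individual terms of \eqref{eq:gen-ident} then drop out by expanding $F_h\bigl(\tfrac{g^{r}}{r!}\tfrac{\cc^{r}\omega}{r!}\bigr)$ with the derivation property and formula~\eqref{Fh-formula}. No iterated contraction of a product is ever performed.

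By contrast, your plan hinges on ``modified Leibniz rules'' for $\cc$ on exterior products of double forms, which you neither state nor prove. There is no clean Leibniz rule for $\iota_g$ on $\alpha\cdot\beta$ when $g$ is a $(1,1)$ form (this is exactly the difficulty you flag), and tracking the correction terms through $2p-1$ iterations is precisely the combinatorial swamp the $F_h$/Hodge-star trick avoids. Your approach might be made to work, but as written it is a plan to discover formulas rather than a proof, and your fallback (matching coefficients in the orthogonal decomposition) would be even more laborious. For part two, your argument via \eqref{eq:Tn=0} and $\langle\omega,\ast\omega\rangle$ is essentially the paper's, except that the paper obtains the scalar identity $\langle\omega,\ast\omega\rangle=\sum_{r}(-1)^{r+p}\|\cc^{r}\omega/r!\|^{2}$ directly from the same Hodge star formula rather than via the orthogonal decomposition.
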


\begin{proof}
Let $h$ be an arbitrary symmetric $(1,1)$ bilinear form. Using successively that $\cc$ is the adjoint of left exterior multiplication by $g$, that $F_h$ is self‑adjoint and acts as a derivation on the exterior algebra, we compute
\begin{align*}
\Bigl\langle\frac{\cc^{2p-1}(\omega^2)}{(2p-1)!},h\Bigr\rangle
&=\bigl\langle\omega^2,\tfrac{g^{2p-1}h}{(2p-1)!}\bigr\rangle
  =\bigl\langle\omega^2,\tfrac12F_h\bigl(\tfrac{g^{2p}}{(2p)!}\bigr)\bigr\rangle
  =\bigl\langle\omega\,F_h(\omega),\tfrac{g^{2p}}{(2p)!}\bigr\rangle \\
&=\bigl\langle F_h(\omega),\iota_\omega\bigl(\tfrac{g^{2p}}{(2p)!}\bigr)\bigr\rangle
  =\bigl\langle F_h(\omega),\ast\bigl(\tfrac{g^{n-2p}}{(n-2p)!}\,\omega\bigr)\bigr\rangle .
\end{align*}\\
In the last two steps we used the fact that the interior product $\iota_{\omega}$ is the adjoint of the exterior multiplication by $\omega$ and  formula (16) in \cite{Labbi-Bel}.\\
Formula (15) of \cite{Labbi-double-forms} (or formula (25) of \cite{Labbi-Bel}, which does not require the first Bianchi identity) gives
\[
\ast\bigl(\tfrac{g^{n-2p}}{(n-2p)!}\,\omega\bigr)
   =\sum_{r=0}^{p}(-1)^{r+p}\frac{g^r}{r!}\,\frac{\cc^r\omega}{r!}.
\]
Hence
\begin{align*}
\Bigl\langle\frac{\cc^{2p-1}(\omega^2)}{(2p-1)!},h\Bigr\rangle
&=\sum_{r=0}^{p}(-1)^{r+p}
   \bigl\langle\omega,F_h\bigl(\tfrac{g^r}{r!}\,\tfrac{\cc^r\omega}{r!}\bigr)\bigr\rangle \\
&=(-1)^p\langle\omega,F_h(\omega)\rangle
   +\bigl\langle\omega,F_h\bigl(\tfrac{g^p}{p!}\,\tfrac{\cc^p\omega}{p!}\bigr)\bigr\rangle\\
&\quad+\sum_{r=1}^{p-1}(-1)^{r+p}
   \bigl\langle\omega,
         2\tfrac{g^{r-1}h}{(r-1)!}.\tfrac{\cc^r\omega}{r!}
        +\tfrac{g^r}{r!}F_h\bigl(\tfrac{\cc^r\omega}{r!}\bigr)\bigr\rangle .
\end{align*}
For the last sum we use
\[
F_h\bigl(\tfrac{\cc^r\omega}{r!}\bigr)
   =\tfrac{g^{p-r-1}h}{(p-r-1)!}\circ\tfrac{\cc^r\omega}{r!}
    +\tfrac{\cc^r\omega}{r!}\circ\tfrac{g^{p-r-1}h}{(p-r+1)!},
\]
and obtain
\begin{align*}
\Bigl\langle\frac{\cc^{2p-1}(\omega^2)}{(2p-1)!},h\Bigr\rangle
&=(-1)^p\langle\omega,F_h(\omega)\rangle
   +\tfrac{\cc^p\omega}{p!}\bigl\langle\omega,F_h\bigl(\tfrac{g^p}{p!}\bigr)\bigr\rangle\\
&\quad+\sum_{r=1}^{p-1}(-1)^{r+p}
   \Bigl\langle2\tfrac{\cc^{r-1}}{(r-1)!}
          \bigl(\iota_{\frac{\cc^r\omega}{r!}}\omega\bigr)
         +2\tfrac{\cc^{p-r-1}}{(p-r-1)!}
          \bigl(\tfrac{\cc^r\omega}{r!}\circ\tfrac{\cc^r\omega}{r!}\bigr),\,h\Bigr\rangle .
\end{align*}
Finally,
\[
\langle\omega,F_h(\omega)\rangle
   =\bigl\langle\omega,
      \tfrac{g^{p-1}h}{(p-1)!}\circ\omega
      +\omega\circ\tfrac{g^{p-1}h}{(p-1)!}\bigr\rangle
   =\bigl\langle2\tfrac{\cc^{p-1}(\omega\circ\omega)}{(p-1)!},\,h\bigr\rangle .
\]
Since $h$ was arbitrary, equation \eqref{eq:gen-ident} follows.

When $n=2p$, the form $\omega^2$ is an $(n,n)$-double form and therefore proportional to $g^n$:
\[
\omega^2=\frac{\cc^n(\omega^2)}{n!}\,\frac{g^n}{n!},
\qquad\text{so}\qquad
\frac{\cc^{n-1}(\omega^2)}{(n-1)!}= \frac{\cc^n(\omega^2)}{n!}\,g .
\]
Moreover,
\begin{align*}
\frac{\cc^n(\omega^2)}{n!}
&=\bigl\langle\omega^2,\tfrac{g^n}{n!}\bigr\rangle
  =\bigl\langle\omega,\iota_\omega\bigl(\tfrac{g^n}{n!}\bigr)\bigr\rangle
  =\langle\omega,\ast\omega\rangle \\
&=\Bigl\langle\omega,\sum_{r=0}^{p}(-1)^{r+p}
      \tfrac{g^r}{r!}\,\tfrac{\cc^r\omega}{r!}\Bigr\rangle
  =\sum_{r=0}^{p}(-1)^{r+p}
    \bigl\langle\tfrac{\cc^r\omega}{r!},\tfrac{\cc^r\omega}{r!}\bigr\rangle .
\end{align*}
Substituting this expression for $\cc^n(\omega^2)/n!$ into the previous formula yields \eqref{eq:gen-ident-2p}.
\end{proof}

\begin{corollary}\label{cor:tracefree}
Let $W$ be a trace‑free symmetric $(p,p)$ double form on $(V,g)$ with $n\ge 2p\ge 4$. Then
\[
\frac12\frac{\cc^{2p-1}(W^2)}{(2p-1)!}=(-1)^p\frac{\cc^{p-1}(W\circ W)}{(p-1)!}.
\]
If in addition $n=2p$, then
\[
\frac12\|W\|^2=\frac{\cc^{p-1}(W\circ W)}{(p-1)!}.
\]
\end{corollary}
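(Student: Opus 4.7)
The plan is to specialize Theorem~\ref{thm:gen-Lanczos} to the trace-free case and observe that almost every term on the right-hand side vanishes. The key elementary observation is that if $W$ is trace-free then $\cc W = 0$, and iterating, $\cc^r W = 0$ for every $r \geq 1$. In the identity \eqref{eq:gen-ident} with $\omega = W$, the product $(\cc^p W/p!)(\cc^{p-1}W/(p-1)!)$ therefore vanishes, and each summand in $\sum_{r=1}^{p-1}$ contains a factor of $\cc^r W/r!$ (either inside an interior product $\iota_{\cc^r W/r!}\omega$ or inside the composition $\cc^r W/r! \circ \cc^r W/r!$), so the entire sum drops out. Only the term $(-1)^p \cc^{p-1}(W \circ W)/(p-1)!$ survives, which is the first claim.

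For the critical dimension $n = 2p$, I would invoke \eqref{eq:gen-ident-2p} instead. The same collapse occurs on the right; on the left, the series $\sum_{r=0}^{p} \frac{(-1)^{r+p}}{2}\bigl\|\cc^r W/r!\bigr\|^2$ loses all summands with $r \geq 1$, leaving only $\frac{(-1)^p}{2}\|W\|^2$. The resulting equation reads $\tfrac{(-1)^p}{2}\|W\|^2\, g = (-1)^p \cc^{p-1}(W \circ W)/(p-1)!$, and cancelling the common sign $(-1)^p$ yields the stated identity, with the factor of $g$ on the left being implicit in the equality of $(1,1)$ double forms (the right-hand side being manifestly of bidegree $(1,1)$).

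No serious obstacle is expected; the argument is pure bookkeeping of vanishing terms once the trace-free hypothesis is invoked. The only mild point of care is interpretive: in the $n=2p$ statement the right-hand side is a $(1,1)$ double form, so the scalar $\tfrac12\|W\|^2$ on the left should be read as $\tfrac12\|W\|^2 g$. As a sanity check, for $p=1$ this recovers the familiar identity $\tfrac12\|W\|^2 g = -W \circ W$ for a trace-free symmetric $(1,1)$ form, which is consistent with the sign $(-1)^p$ in the general formula.
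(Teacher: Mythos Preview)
Your approach is correct and is exactly the intended one: the corollary is obtained by direct specialization of Theorem~\ref{thm:gen-Lanczos}, and since $\cc W=0$ forces $\cc^r W=0$ for all $r\ge 1$, every term on the right-hand side of \eqref{eq:gen-ident} and \eqref{eq:gen-ident-2p} involving a contraction of $W$ vanishes, leaving only the $(-1)^p\cc^{p-1}(W\circ W)/(p-1)!$ term. Your observation that the second displayed identity should be read as an equality of $(1,1)$ double forms (with an implicit $g$ on the left) is also correct.

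One small correction to your closing sanity check: for $p=1$ (which is outside the hypothesis $2p\ge 4$, but still) the identity you obtain after cancelling $(-1)^p$ is $\tfrac12\|W\|^2 g = W\circ W$, not $-W\circ W$. Indeed, in dimension $2$ a trace-free symmetric $W$ with eigenvalues $\pm\lambda$ satisfies $W\circ W=\lambda^2 g$ and $\|W\|^2=2\lambda^2$. This does not affect your argument for $p\ge 2$.
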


\subsection{Reformulation via the Hodge star}

For a symmetric $(p,p)$ double form with $2p+1\le n$ one has the identity (see \cite{Labbi-double-forms,Labbi-Bel})
\begin{equation}\label{eq:Hodge-square}
\ast\Bigl(\frac{g^{n-2p-1}\omega^2}{(n-2p-1)!}\Bigr)
   =\frac{\cc^{2p}\omega^2}{(2p)!}\,g
    -\frac{\cc^{2p-1}\omega^2}{(2p-1)!}.
\end{equation}
Using formula (60) of \cite{Labbi-Archivum},
\[
\frac{\cc^{2p}\omega^2}{(2p)!}
   =\sum_{r=0}^{p}(-1)^{r+p}\Bigl\|\frac{\cc^r\omega}{r!}\Bigr\|^2
   =(-1)^p\|\omega\|^2+\sum_{r=1}^{p}(-1)^{r+p}\Bigl\|\frac{\cc^r\omega}{r!}\Bigr\|^2,
\]
together with Theorem~\ref{thm:gen-Lanczos}, equation \eqref{eq:Hodge-square} can be rewritten as
\begin{equation}\label{eq:gen-ident-star}
\begin{split}
\ast\Bigl(\frac{g^{n-2p-1}\omega^2}{(n-2p-1)!}\Bigr)
&=\sum_{r=0}^{p}(-1)^{r+p}\Bigl\|\frac{\cc^r\omega}{r!}\Bigr\|^2\,g
   -2(-1)^p\frac{\cc^{p-1}(\omega\circ\omega)}{(p-1)!}
   -2\frac{\cc^p\omega}{p!}\,\frac{\cc^{p-1}\omega}{(p-1)!}\\
&\quad-2\sum_{r=1}^{p-1}(-1)^{r+p}\Bigl[
      \frac1{(r-1)!}\cc^{r-1}\!\bigl(\iota_{\frac{\cc^{r}\omega}{r!}}\omega\bigr)
     +\frac1{(p-r-1)!}\cc^{p-r-1}\!\Bigl(
        \frac{\cc^{r}\omega}{r!}\circ\frac{\cc^{r}\omega}{r!}\Bigr)
    \Bigr].
\end{split}
\end{equation}

Now assume that $\omega$ satisfies $\cc\omega=\lambda\frac{g^{p-1}}{(p-1)!}$ for some constant $\lambda$. Then for every $r$ with $1\le r\le p$,
\[
\cc^r\omega=\frac{(n-p+r)!\,\lambda}{(p-r)!\,(n-p+1)!}\,g^{p-r}.
\]
Inserting these expressions into \eqref{eq:gen-ident-star} we obtain a much simpler relation.

\begin{corollary}\label{cor:hyper-identity}
Let $\omega$ be a symmetric $(p,p)$ double form with $5\le 2p+1\le n$ and suppose
$\cc\omega=\lambda\frac{g^{p-1}}{(p-1)!}$ for some constant $\lambda$. Then
\begin{equation}\label{eq:hyper-identity}
(-1)^p\ast\Bigl(\frac{g^{n-2p-1}\omega^2}{(n-2p-1)!}\Bigr)
   =\|\omega\|^2g-2\frac{\cc^{p-1}(\omega\circ\omega)}{(p-1)!}
    +\lambda^2\,c(n,p)\,g,
\end{equation}
where $c(n,p)$ is a numerical constant depending only on $n$ and $p$.
\end{corollary}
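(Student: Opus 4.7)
The plan is to start from equation \eqref{eq:gen-ident-star} applied to our $\omega$ and, after multiplying both sides by $(-1)^p$, show that every term on the right‑hand side except $\|\omega\|^2 g$ (coming from $r=0$ in the first sum) and $-2\,\cc^{p-1}(\omega\circ\omega)/(p-1)!$ collapses into a single numerical multiple of $\lambda^2 g$. Summing these multiples will define the constant $c(n,p)$ of the statement.

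The first step is to establish by induction on $r$ the explicit formula $\cc^r\omega = \frac{(n-p+r)!\,\lambda}{(p-r)!\,(n-p+1)!}\,g^{p-r}$ for $1\le r\le p$, which is already stated in the paragraph preceding the corollary. The elementary relation $\cc(g^k)=k(n-k+1)\,g^{k-1}$ together with the base case $r=1$ (the hypothesis) makes the induction routine. In particular, each $\cc^r\omega$ is a linear‑in‑$\lambda$ scalar multiple of $g^{p-r}$, so all further contractions, interior products, and composition products involving these tensors are governed by pure algebra of powers of $g$.

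Next I would analyse each term of \eqref{eq:gen-ident-star} separately. For $1\le r\le p$, $\|\cc^r\omega/r!\|^2$ is a numerical multiple of $\lambda^2$ via $\|g^k\|^2=k!^2\binom{n}{k}$. The product $\frac{\cc^p\omega}{p!}\,\frac{\cc^{p-1}\omega}{(p-1)!}$ is manifestly a numerical $\lambda^2$‑multiple of $g$. For the interior‑product term I would use $\iota_{g^j}=\cc^j$ to get $\iota_{\cc^r\omega/r!}\omega=(\mathrm{const})\,\lambda\,\cc^{p-r}\omega=(\mathrm{const})\,\lambda^2\,g^r$, and then $\cc^{r-1}$ of this is a numerical multiple of $\lambda^2 g$. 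For the composition term, $g^{p-r}\circ g^{p-r}$ is an $O(V)$‑invariant $(p-r,p-r)$ double form, hence proportional to $g^{p-r}$, so $\cc^{p-r-1}$ of it again yields a multiple of $g$. Assembling all contributions produces the identity.

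The main obstacle is routine but extensive combinatorial bookkeeping, namely tracking the iterated factorial coefficients from $\cc^j(g^k)$ and pinning down the scalar that arises in $g^{p-r}\circ g^{p-r}$ should one wish an explicit formula for $c(n,p)$. Since the statement asserts only the existence of such a constant, this step can be compressed by invoking the uniqueness of $O(V)$‑invariant double forms in each degree and gathering every $\lambda^2$‑contribution into a single coefficient without evaluating it explicitly.
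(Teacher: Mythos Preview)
Your proposal is correct and follows essentially the same route as the paper: both start from \eqref{eq:gen-ident-star}, invoke the iterated contraction formula $\cc^r\omega=\frac{(n-p+r)!\,\lambda}{(p-r)!\,(n-p+1)!}\,g^{p-r}$, and then observe that every term other than $\|\omega\|^2 g$ and $-2\,\cc^{p-1}(\omega\circ\omega)/(p-1)!$ reduces to a numerical multiple of $\lambda^2 g$. Your write-up merely spells out in more detail (via $\iota_{g^j}=\cc^j$ and $g^k\circ g^k\propto g^k$) the term-by-term simplification that the paper compresses into a single sentence.
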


\subsection{Schur-type results for double forms satisfying the second Bianchi's identity}

When a double form satisfies the second Bianchi identity, the algebraic identities above imply rigidity properties reminiscent of the classical Schur lemma.

\begin{theorem}\label{thm:Schur-properties}
Let $\omega$ be a symmetric $(p,p)$ double form satisfying the second Bianchi identity on a compact and connected Riemannian $n$-manifold $(M,g)$.
\begin{enumerate}
\item If $\cc^{\,p-1}\omega=\lambda g$ for some function $\lambda$ and $2\le p<n$, then $\lambda$ is constant on~$M$.
\item If $\cc\omega=\lambda\frac{g^{p-1}}{(p-1)!}$ and $\cc^{p-1}(\omega\circ\omega)=\mu g$ for some functions $\lambda,\mu$ and $4\le 2p<n$, then both $\lambda$ and $\mu$ are constant on~$M$.
\end{enumerate}
\end{theorem}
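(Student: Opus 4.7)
The plan is to prove Part (1) as a direct generalization of the classical Schur lemma for Einstein metrics, and to deduce Part (2) by applying Part (1) twice: once to $\omega$ itself, and once to the exterior square $\omega\cdot\omega$.

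For Part (1), the starting point is the divergence-free identity
\begin{equation*}
\operatorname{div}\!\left(\frac{\cc^{p-1}\omega}{(p-1)!} - \frac{\cc^{p}\omega}{p!}\,g\right) = 0,
\end{equation*}
which I expect to hold for any symmetric $(p,p)$ double form $\omega$ satisfying the second Bianchi identity $D\omega = 0$. This is the natural analog, via the derivation property of $D$, of the contracted Bianchi identity $\operatorname{div}(\Ric - \tfrac{1}{2}\Scal\,g) = 0$ that underlies the classical Schur lemma; I would derive it from the adjoint relation between $D$ and $\tilde D$ together with the commutation rules between $\cc$ and exterior differentiation. Substituting the hypothesis $\cc^{p-1}\omega = \lambda g$ (hence $\cc^{p}\omega = n\lambda$) and using $\operatorname{div}(\phi g) = d\phi$ produces $(p-n)\,d\lambda = 0$; since $p<n$, we conclude $d\lambda = 0$, and $\lambda$ is constant on the connected manifold $M$.

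For Part (2), both assertions reduce to Part (1). The hyper condition $\cc\omega = \lambda g^{p-1}/(p-1)!$ implies, upon iterating contractions using the formula displayed just before Corollary~\ref{cor:hyper-identity}, that $\cc^{p-1}\omega$ is a nonzero constant multiple of $\lambda g$; since $2\le p<n$, Part (1) gives that $\lambda$ is constant. For $\mu$, I invoke the generalized Lanczos identity (Theorem~\ref{thm:gen-Lanczos}): under the hyper condition each contraction $\cc^{r}\omega$ with $r\ge 1$ becomes a constant multiple of $\lambda g^{p-r}$, so every term on the right-hand side of \eqref{eq:gen-ident} other than the leading one $(-1)^{p}\,\cc^{p-1}(\omega\circ\omega)/(p-1)!$ collapses to a scalar multiple of $\lambda^{2}g$. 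Combined with $\cc^{p-1}(\omega\circ\omega) = \mu g$, the identity then takes the form
\begin{equation*}
\frac{\cc^{2p-1}(\omega^{2})}{(2p-1)!} = \alpha\,\mu\,g + \beta\,\lambda^{2}\,g
\end{equation*}
for explicit constants $\alpha, \beta$ depending only on $n$ and $p$, with $\alpha\neq 0$.

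To finish, apply Part (1) to $\omega^{2} = \omega\cdot\omega$, which is a symmetric $(2p,2p)$ double form that inherits the second Bianchi identity from $\omega$ since $D$ is a derivation on the exterior algebra of double forms: $D(\omega\cdot\omega) = D\omega\cdot\omega + (-1)^{2p}\omega\cdot D\omega = 0$. Because $2p<n$, Part (1) with $p$ replaced by $2p$ gives that $\alpha\mu + \beta\lambda^{2}$ is constant on $M$; since $\lambda$ is already constant and $\alpha\neq 0$, $\mu$ is constant as well. The main obstacle I anticipate is the clean derivation of the generalized divergence-free identity used in Part (1) for an arbitrary symmetric $(p,p)$ double form satisfying the second Bianchi identity---not merely for $\omega = R^{k}$, where it is the classical divergence freeness of the Lovelock tensor. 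Once that identity is in hand, the remainder of the argument is purely algebraic, depending only on Theorem~\ref{thm:gen-Lanczos} and the derivation property of $D$.
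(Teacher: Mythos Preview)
Your proposal is correct. Part (1) is essentially the paper's argument: the paper realises the divergence-free tensor you write down concretely as $\ast\bigl(g^{n-p-1}\omega/(n-p-1)!\bigr)$, which makes the divergence-freeness immediate from $D\omega=0$ via $\delta\ast=\pm\ast D$; this is precisely the derivation you anticipate, and it resolves the obstacle you flag.

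For Part (2) you take a genuinely different and somewhat cleaner route. The paper works with the Hodge-star reformulation \eqref{eq:hyper-identity}, which under the hyper condition reads $(-1)^p\ast\bigl(g^{n-2p-1}\omega^2/(n-2p-1)!\bigr)=\|\omega\|^2g-2\cc^{p-1}(\omega\circ\omega)/(p-1)!+\lambda^2c(n,p)g$; it then contracts to obtain $\mu=\tfrac{p}{n}\|\omega\|^2$ and compares two expressions for the divergence of $\cc^{p-1}(\omega\circ\omega)$ to force $\tilde D\|\omega\|^2=0$. You bypass this by observing that the Lanczos identity collapses, under the hyper condition, to $\cc^{2p-1}(\omega^2)/(2p-1)!=(\alpha\mu+\beta\lambda^2)g$ with $\alpha=2(-1)^p/(p-1)!\neq 0$, and then applying Part (1) directly to the symmetric $(2p,2p)$ double form $\omega^2$ (which inherits the second Bianchi identity by the derivation property of $D$, and $2p<n$ by hypothesis). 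This is more economical: it uses Part (1) as a black box twice and avoids the auxiliary $\|\omega\|^2$ computation. The paper's approach, on the other hand, yields the extra information that $\|\omega\|^2$ itself is constant, though this is equivalent to your conclusion via $\mu=\tfrac{p}{n}\|\omega\|^2$.
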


\begin{proof}
(1)  From \cite[Formula (15)]{Labbi-double-forms} (see also \cite{Labbi-Bel}) we have
\[
\ast\Bigl(\frac{g^{n-p-1}\omega}{(n-p-1)!}\Bigr)
   =\frac{\cc^{\,p}\omega}{p!}\,g-\frac{\cc^{\,p-1}\omega}{(p-1)!}.
\]
Under the hypothesis $\cc^{\,p-1}\omega=\lambda g$ one has $\cc^p\omega=\lambda n$ and  this becomes
\[
\ast\Bigl(\frac{g^{n-p-1}\omega}{(n-p-1)!}\Bigr)
   =\lambda\Bigl(\frac{n-p}{p!}\Bigr)g .
\]
Because $\omega$ satisfies the second Bianchi identity, the left‑hand side is divergence‑free; consequently the right‑hand side is also divergence‑free, which forces $\lambda$ to be constant.

(2)  The condition $\cc\omega=\lambda\frac{g^{p-1}}{(p-1)!}$ implies, via part (1), that $\lambda$ is constant. Moreover, equation \eqref{eq:hyper-identity} shows that the tensor
\[
(-1)^p\ast\Bigl(\frac{g^{n-2p-1}\omega^2}{(n-2p-1)!}\Bigr)
   =\|\omega\|^2g-2\frac{\cc^{p-1}(\omega\circ\omega)}{(p-1)!}
    +\lambda^2c(n,p)g
\]
is divergence‑free (again because $\omega$ satisfies the second Bianchi identity). Using $\cc^{p-1}(\omega\circ\omega)=\mu g$ and contracting once, we find $\mu=\frac{p}{n}\|\omega\|^2$. Hence
\[
\cc^{p-1}(\omega\circ\omega)=\frac{p}{n}\|\omega\|^2 g,
\qquad\text{and}\qquad
\delta\bigl(\cc^{p-1}(\omega\circ\omega)\bigr)=\frac{p}{n}\,\tilde{D}(\|\omega\|^2).
\]
On the other hand, from the divergence‑free property of the left‑hand side of \eqref{eq:hyper-identity} we obtain
\[
\delta\bigl(\cc^{p-1}(\omega\circ\omega)\bigr)=\frac12\tilde{D}(\|\omega\|^2).
\]
Comparing the two expressions gives $\frac{n-2p}{2n}\tilde{D}(\|\omega\|^2)=0$, so $\|\omega\|^2$ (and therefore $\mu$) is constant.
\end{proof}

\section{Critical metrics for the Lovelock functionals}

Let $(M,g)$ be a Riemannian manifold of dimension $n$ and let $0 \leq 2k \leq n$.
Denote by $R^k$ the $k$-th exterior power of the Riemann curvature tensor $R$, viewed as a $(2,2)$ double form.

\begin{definition}\label{GB-definition}
\begin{enumerate}
\item The \emph{$(2k)$-th Gauss--Bonnet curvature} of $(M,g)$ is the full contraction of $R^k$:
\begin{equation}\label{eq:h2k}
h_{2k} = \frac{\cc^{2k}R^k}{(2k)!}.
\end{equation}

\item The \emph{$(2k)$-th Lovelock tensor} of $(M,g)$ is
\begin{equation}\label{eq:T2k}
T_{2k} = h_{2k} g - \frac{\cc^{2k-1}R^k}{(2k-1)!}.
\end{equation}
\end{enumerate}
\end{definition}

For $k=0$ we have $h_0=1$ and $T_0=g$; for $k=1$ we recover one-half of the scalar curvature and the Einstein tensor.
When $2k=n$, we have $R^k = h_n \frac{g^n}{n!}$ and consequently $T_n = 0$. Up to a constant factor, $h_n$ is the integrand in the Gauss--Bonnet formula.

The following theorem, first proved by Lovelock \cite{Lovelock}, describes the gradient of the Lovelock functional.

\begin{theorem}[\cite{Lovelock, Labbi-variational}]\label{thm:grad-H2k}
For $2 \leq 2k \leq n$, the gradient of the functional
\[
H_{2k}(g) = \int_M h_{2k}(g) \, \mathrm{dvol}_g
\]
on the space of Riemannian metrics on $M$ equals one half of the $(2k)$-th Lovelock tensor $T_{2k}$.
\end{theorem}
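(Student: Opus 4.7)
I would differentiate
\[
H_{2k}(g_t)=\int_M \tilde{g}_t\bigl(R_t^k,\, g_t^{2k}/(2k)!\bigr)\,\mathrm{dvol}_{g_t}
\]
at $t=0$ in the direction $h:=g_0'$, splitting the derivative of the integrand via the product rule into four pieces: the variation of (i) the induced inner product $\tilde{g}$, (ii) $R^k$, (iii) $g^{2k}/(2k)!$, and (iv) $\mathrm{dvol}_g$. Lemma~\ref{lemma:diff-metric} handles (i) and yields $-\tilde{g}(F_h(R^k),\,g^{2k}/(2k)!)$; self-adjointness of $F_h$, combined with the derivation identity $F_h(g^{2k}/(2k)!)=2g^{2k-1}h/(2k-1)!$ (which follows from $F_h(g)=2h$), converts this into $-2\tilde{g}(R^k,\,g^{2k-1}h/(2k-1)!)$. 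For (ii), since exterior multiplication is a derivation, $(R^k)'=kR^{k-1}(R'h)$, and substituting Lemma~\ref{lemma:diff-curvature} splits the contribution into an algebraic piece $\tfrac{k}{4}R^{k-1}F_h(R)=\tfrac{1}{4}F_h(R^k)$ and a differential piece $-\tfrac{k}{4}R^{k-1}(D\tilde{D}+\tilde{D}D)h$. Pieces (iii) and (iv) are direct: $(g^{2k}/(2k)!)'=g^{2k-1}h/(2k-1)!$ and $(\mathrm{dvol}_g)'=\tfrac{1}{2}\tilde{g}(g,h)\,\mathrm{dvol}_g$.

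Next I would collect the algebraic terms using adjointness of the contraction $\cc=\iota_g$ with left exterior multiplication by $g$, which rewrites every factor of the form $\tilde{g}(R^k,\,g^{2k-1}h/(2k-1)!)$ as $\tilde{g}(\cc^{2k-1}R^k/(2k-1)!,\,h)$. The coefficients from (i), the algebraic part of (ii), and (iii) tally to $-2+\tfrac{1}{2}+1=-\tfrac{1}{2}$, while (iv) contributes $\tfrac{1}{2}h_{2k}\,\tilde{g}(g,h)$. Adding these yields precisely
\[
\tfrac{1}{2}\,\tilde{g}\!\Bigl(h_{2k}\,g - \tfrac{\cc^{2k-1}R^k}{(2k-1)!},\,h\Bigr)\;=\;\tfrac{1}{2}\,\tilde{g}(T_{2k},\,h),
\]
which is the desired gradient identity, modulo the remaining differential contribution.

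The main obstacle is to show that the differential integral
\[
-\tfrac{k}{4}\int_M \tilde{g}\bigl(R^{k-1}(D\tilde{D}+\tilde{D}D)h,\,g^{2k}/(2k)!\bigr)\,\mathrm{dvol}_g
\]
vanishes. My approach is to combine the second Bianchi identity $DR=0$ (which by the derivation property of $D$ on the exterior algebra of double forms gives $DR^{k-1}=0$) with the parallelism $D(g^p)=\tilde{D}(g^p)=0$. For the $D\tilde{D}h$ part, the derivation property yields $R^{k-1}D\tilde{D}h=D(R^{k-1}\tilde{D}h)$, and adjointness of $D$ and $\tilde{D}$ then reduces the integrand to $\tilde{g}(R^{k-1}\tilde{D}h,\,\tilde{D}(g^{2k}/(2k)!))=0$. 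The $\tilde{D}Dh$ part is handled by first applying adjointness of interior and exterior product to rewrite the integrand as $\tilde{g}(\tilde{D}Dh,\,\iota_{R^{k-1}}(g^{2k}/(2k)!))$, then integrating by parts and invoking a commutator identity of the form $[D,\iota_{R^{k-1}}]=\iota_{DR^{k-1}}$, which vanishes because both $DR^{k-1}=0$ and $D(g^{2k}/(2k)!)=0$. This cancellation mechanism is precisely what makes $T_{2k}$ divergence-free and identifies Theorem~\ref{thm:grad-H2k} as the natural higher-order analogue of the classical Einstein case ($k=1$).
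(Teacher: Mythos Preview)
Your algebraic bookkeeping for pieces (i)--(iv) is correct and coincides exactly with the paper's computation; the tally $-2+\tfrac12+1=-\tfrac12$ is the key identity. The only real divergence is in how you dispatch the differential integral, and here your argument has two slips.

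First, the $L^2$-adjoint of $D$ is $\delta$, not $\tilde D$. Your integration-by-parts for the $D\tilde D h$ piece should therefore read $\langle D(R^{k-1}\tilde D h),\,g^{2k}/(2k)!\rangle=\langle R^{k-1}\tilde D h,\,\delta(g^{2k}/(2k)!)\rangle$, which still vanishes since $g$ is parallel, so the conclusion survives. Second, for the $\tilde D D h$ piece your commutator claim $[D,\iota_{R^{k-1}}]=\iota_{DR^{k-1}}$ is not a standard identity (compare Cartan's formula), and after integration by parts you are left with $\delta$ and $\tilde\delta$ acting on $\iota_{R^{k-1}}(g^{2k}/(2k)!)$, not with $D$, so the commutator you wrote is not the one you need. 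The simplest repair is to treat this piece exactly as you did the first: since $R$ is symmetric, $\tilde D R^{k-1}=0$ as well, so $R^{k-1}\tilde D D h=\pm\tilde D(R^{k-1}Dh)$, and integrating by parts against $\tilde\delta(g^{2k}/(2k)!)=0$ kills it.

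For comparison, the paper avoids this case analysis entirely: it moves to $\langle(D\tilde D+\tilde D D)h,\,\iota_{R^{k-1}}(g^{2k}/(2k)!)\rangle$, integrates by parts in one stroke to get $\langle h,\,(\delta\tilde\delta+\tilde\delta\delta)\iota_{R^{k-1}}(g^{2k}/(2k)!)\rangle$, and then uses the identity $\iota_{R^{k-1}}(g^{2k}/(2k)!)=\ast\bigl(g^{n-2k}R^{k-1}/(n-2k)!\bigr)$ together with $DR^{k-1}=0$ to conclude that this double form is divergence-free. This Hodge-star route is cleaner and has the advantage of identifying the object $\ast(g^{n-2k}R^{k-1})$ explicitly as (a multiple of) the Lovelock tensor $T_{2k-2}$, making the link to divergence-freeness transparent.
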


For completeness we provide a proof using Lemma~\ref{lemma:diff-metric}. This proof follows the lines of \cite{Labbi-variational}.

\begin{proof}
Write
\[
H_{2k}(g) = \int_M \tilde{g}\!\left(R^k, \frac{g^{2k}}{(2k)!}\right) \mathrm{dvol}_g .
\]
Using Lemma~\ref{lemma:diff-metric}, the directional derivative of $H_{2k}$ in the direction of a symmetric $(1,1)$ double form $h$ is
\begin{align*}
H_{2k}'.h &= -\bigl\langle F_h(R^k), \tfrac{g^{2k}}{(2k)!}\bigr\rangle
           + \bigl\langle kR^{k-1}R'h, \tfrac{g^{2k}}{(2k)!}\bigr\rangle 
           + \bigl\langle R^k, \tfrac{g^{2k-1}h}{(2k-1)!}\bigr\rangle
           + \tfrac12\bigl\langle h_{2k}g, h\bigr\rangle \\
          &= I + II + III + IV,
\end{align*}
where $\langle\cdot,\cdot\rangle = \tilde{g}(\cdot,\cdot)$.

Because $F_h$ is self‑adjoint and acts as a derivation,
\[
I = -\bigl\langle R^k, F_h\bigl(\tfrac{g^{2k}}{(2k)!}\bigr)\bigr\rangle
   = -\bigl\langle R^k, \tfrac{g^{2k-1}}{(2k-1)!}F_h(g)\bigr\rangle
   = -2\bigl\langle R^k, \tfrac{g^{2k-1}}{(2k-1)!}h\bigr\rangle
   = -2\bigl\langle \tfrac{\cc^{2k-1}R^k}{(2k-1)!}, h\bigr\rangle .
\]

For the second term we use the expression for $R'h$ from Lemma~\ref{lemma:diff-curvature}:
\begin{align*}
II &= \tfrac{k}{4}\bigl\langle R^{k-1}F_h(R), \tfrac{g^{2k}}{(2k)!}\bigr\rangle
     -\tfrac{k}{4}\bigl\langle R^{k-1}(D\tilde D+\tilde D D)(h), \tfrac{g^{2k}}{(2k)!}\bigr\rangle \\
   &= \tfrac14\bigl\langle F_h(R^k), \tfrac{g^{2k}}{(2k)!}\bigr\rangle
      -\tfrac{k}{4}\bigl\langle (D\tilde D+\tilde D D)(h), \iota_{R^{k-1}}\tfrac{g^{2k}}{(2k)!}\bigr\rangle \\
   &= \tfrac12\bigl\langle R^k, F_h\bigl(\tfrac{g^{2k}}{(2k)!}\bigr)\bigr\rangle
      -\tfrac{k}{4}\bigl\langle h,
        (\delta\tilde\delta+\tilde\delta\delta)\bigl(
        \iota_{R^{k-1}}\tfrac{g^{2k}}{(2k)!}\bigr)\bigr\rangle .
\end{align*}
In the second line we used that $\iota_\omega$ is the adjoint of exterior multiplication by $\omega$; in the third line we used that $\delta\tilde\delta+\tilde\delta\delta$ is the adjoint of $D\tilde D+\tilde D D$ with respect to the integral scalar product.

Formula (16) of \cite{Labbi-Bel}  gives
\[
\iota_{R^{k-1}}\tfrac{g^{2k}}{(2k)!}
   = \ast\!\left(\tfrac{g^{n-2k}}{(n-2k)!}R^{k-1}\right).
\]
Because $R^{k-1}$ satisfies the second Bianchi identity, the right‑hand side is divergence‑free. Hence $II = \tfrac12\bigl\langle R^k, F_h\bigl(\tfrac{g^{2k}}{(2k)!}\bigr)\bigr\rangle = -\tfrac12 I$.

Collecting the terms,
\[
H_{2k}'.h = I - \tfrac12 I + III + IV
          = -\tfrac12\bigl\langle R^k,\tfrac{g^{2k-1}h}{(2k-1)!}\bigr\rangle
            + \tfrac12\bigl\langle h_{2k}g,h\bigr\rangle
          = \tfrac12\bigl\langle h_{2k}g-\tfrac{\cc^{2k-1}R^k}{(2k-1)!}, h\bigr\rangle.\]
Thus $\nabla H_{2k} = \tfrac12 T_{2k}$.
\end{proof}

\begin{definition}[\cite{Patterson, Labbi-variational}]\label{def:2k-Einstein}
A Riemannian metric is called \emph{$(2k)$-Einstein} if it is a critical point of $H_{2k}$ restricted to the space of unit‑volume metrics.
\end{definition}

Consequently, a metric is $(2k)$-Einstein precisely when its Lovelock tensor is proportional to the metric: $T_{2k} = \lambda g$ for some function $\lambda$. For $k=1$ this reduces to the usual Einstein condition; the definition is vacuous when $2k \geq n$.

Since the tensors $T_{2k}$ are divergence‑free, any $(2k)$-Einstein metric has constant Gauss--Bonnet curvature $h_{2k}$ (see \cite[Proposition 3.4]{Labbi-variational}). Recall that on a hypersurface in Euclidean space, $h_{2k}$ coincides (up to a constant factor) with the $2k$-th mean curvature. Using a result of Ros and Korevaar \cite{Ros-Kor}, we immediately obtain:

\begin{corollary}
The only compact embedded hypersurface in $\mathbb{R}^{n+1}$ that is $(2k)$-Einstein for some $k$ with $2 \leq 2k < n$ is the round sphere.
\end{corollary}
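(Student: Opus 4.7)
The plan is to combine the divergence-free property of the Lovelock tensor, which forces $h_{2k}$ to be constant, with the Alexandrov-type rigidity theorem of Ros and Korevaar for compact embedded hypersurfaces in Euclidean space of constant higher-order mean curvature.

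First, I would show that a $(2k)$-Einstein metric with $2k<n$ has constant $h_{2k}$. Writing the $(2k)$-Einstein condition as $T_{2k}=\lambda g$ for some function $\lambda$, and using that $T_{2k}$ is divergence-free (as noted right after Definition~\ref{def:2k-Einstein}), taking the divergence of both sides forces $\lambda$ to be constant. Applying the scalar contraction $\cc$ to $T_{2k}=\lambda g$ and using $\cc g=n$ together with $\cc T_{2k}=(n-2k)h_{2k}$ (which follows directly from the definition of $T_{2k}$ and the identity $\cc^{2k}R^k=(2k)!\,h_{2k}$), one obtains the scalar relation $n\lambda=(n-2k)h_{2k}$. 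Since $2k<n$, this forces $h_{2k}$ to be constant on $M$.

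Second, I would invoke the standard fact, already recalled in the paragraph preceding the corollary, that on a hypersurface $M^n\hookrightarrow\mathbb{R}^{n+1}$ the Gauss--Bonnet curvature $h_{2k}$ coincides, up to a positive combinatorial constant, with the $(2k)$-th mean curvature $H_{2k}$. In the double-form language this is immediate from the Gauss equation $R=\tfrac12 B^2$, where $B$ is the second fundamental form viewed as a symmetric $(1,1)$ double form whose eigenvalues are the principal curvatures $\kappa_1,\dots,\kappa_n$: one then gets $R^k=2^{-k}B^{2k}$, whose total contraction is a constant multiple of the $(2k)$-th elementary symmetric function $\sigma_{2k}(\kappa)$. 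Combined with the first step, this shows that $H_{2k}$ is constant on $M$.

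Finally, I would apply the Ros--Korevaar theorem \cite{Ros-Kor}: a compact embedded hypersurface in $\mathbb{R}^{n+1}$ whose $(2k)$-th mean curvature is a positive constant must be a round sphere. The main obstacle I anticipate is matching the precise hypotheses of that rigidity theorem, which is typically stated under the positivity assumption $H_{2k}>0$. This is, however, automatic: any compact embedded hypersurface in Euclidean space has at least one point (for instance a farthest point from the origin) at which the shape operator is positive definite, forcing $H_{2k}>0$ there, and hence everywhere by constancy. The Ros--Korevaar theorem then yields the conclusion.
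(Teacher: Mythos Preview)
Your proposal is correct and follows exactly the same route as the paper: the divergence-free property of $T_{2k}$ forces $h_{2k}$ to be constant, the Gauss equation identifies $h_{2k}$ with a multiple of the $(2k)$-th mean curvature, and the Ros--Korevaar theorem concludes. Your treatment is in fact more detailed than the paper's, which simply cites \cite[Proposition~3.4]{Labbi-variational} for the constancy of $h_{2k}$ and does not explicitly address the positivity hypothesis $H_{2k}>0$; your observation that a farthest point supplies a strictly convex point, hence $H_{2k}>0$ everywhere by constancy, is a useful addition.
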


\begin{remark}
On an irreducible locally symmetric space the tensor $T_{2k}$ is symmetric and parallel, hence proportional to the metric. Thus such spaces are $(2k)$-Einstein for every admissible $k$.
\end{remark}

The next proposition characterizes $(2k)$-Einstein metrics in terms of the orthogonal decomposition of $R^k$.

\begin{proposition}[\cite{Labbi-variational}]\label{prop:2k-Einstein-char}
Let $(M,g)$ be a Riemannian manifold of dimension $n$.
\begin{itemize}
\item If $n \geq 2k+1$, then $g$ is $(2k)$-Einstein if and only if the component $\omega_1$ in the orthogonal decomposition
  \[
  R^k = \omega_{2k} + g\omega_{2k-1} + \dots + g^{2k-1}\omega_1 + g^{2k}\omega_0
  \]
  vanishes (i.e., $\omega_1 = 0$).
\item If $n = 2k+1$ (odd dimension), then $g$ is $(2k)$-Einstein if and only if it has constant $2k$-sectional curvature.
\end{itemize}
\end{proposition}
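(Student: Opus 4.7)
The plan is to translate the $(2k)$-Einstein condition into an algebraic equation on the first contraction $\cc^{2k-1} R^k$, and then to extract the desired equivalence directly from the orthogonal decomposition of $R^k$ via Corollary~\ref{cor:vanishing-omega_i}.

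By Theorem~\ref{thm:grad-H2k}, the gradient of $H_{2k}$ is $\tfrac12 T_{2k}$, so $g$ is critical for $H_{2k}$ restricted to unit-volume metrics precisely when $T_{2k}$ is pointwise proportional to $g$. Using the definition $T_{2k} = h_{2k} g - \frac{\cc^{2k-1}R^k}{(2k-1)!}$, this is equivalent to
\[
\frac{\cc^{2k-1} R^k}{(2k-1)!} = \mu\, g
\]
for some scalar function $\mu$ on $M$; equivalently, $\cc^{2k-1} R^k$ is of the form $gA$ with $A \in \mathcal{D}^{0,0}$. For part~(1) I apply Corollary~\ref{cor:vanishing-omega_i} to $\omega = R^k$ (a $(2k,2k)$ double form, $p=2k$) with contraction index $2k-1$: the identity $\cc^{2k-1}R^k = gA$ is equivalent to $\omega_{p-(2k-1)} = \omega_1 = 0$. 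The corollary is stated under $n \geq 4k$, but formula~(12) of \cite{Labbi-double-forms} underlying its proof yields a decomposition $\cc^{2k-1}R^k = \alpha_{n,2k-1,2k}\,\omega_1 + gA'$ with nonzero coefficient whenever $\omega_1$ is a bona fide term of the decomposition of $R^k$, that is, whenever $n - 2k \geq 1$. Uniqueness of the orthogonal decomposition then yields the equivalence for all $n \geq 2k+1$.

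For part~(2), when $n = 2k+1$ the orthogonal decomposition collapses to just two terms, $R^k = g^{2k-1}\omega_1 + g^{2k}\omega_0$, with $\omega_1$ a trace-free $(1,1)$ double form and $\omega_0$ a scalar function. Part~(1) gives that $g$ is $(2k)$-Einstein iff $\omega_1 = 0$, i.e.\ $R^k = g^{2k}\omega_0$. Combining with the established fact that every $(2k)$-Einstein metric has constant Gauss--Bonnet curvature $h_{2k}$ (cf.~\cite[Proposition 3.4]{Labbi-variational}), and observing that $h_{2k} = \omega_0 \cdot \cc^{2k}(g^{2k})/(2k)!$ differs from $\omega_0$ only by a nonzero combinatorial factor depending on $n$ and $k$, we conclude that $\omega_0$ is a constant, i.e.\ $R^k$ is a constant scalar multiple of $g^{2k}$ -- precisely the condition of constant $2k$-sectional curvature. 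The converse is immediate. The only delicate point is verifying that $\alpha_{n,2k-1,2k} \neq 0$ in the intermediate range $2k+1 \leq n < 4k$, but this reduces to a routine combinatorial identity for iterated commutators of $\cc$ with left multiplication by $g$, so it is not a genuine obstacle.
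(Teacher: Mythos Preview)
Your argument is correct and follows the same overall strategy as the paper: translate the $(2k)$-Einstein condition into $\cc^{2k-1}R^k$ being a multiple of $g$, then read off $\omega_1=0$ from the orthogonal decomposition. The two proofs diverge only in how the intermediate range $2k+1\le n<4k$ is handled. The paper avoids extending Corollary~\ref{cor:vanishing-omega_i} beyond its stated hypothesis $n\ge 4k$; instead it invokes the structural fact (from \cite{Labbi-Balkan}) that in this range $R^k=g^{4k-n}A$ for an $(n-2k,n-2k)$ double form $A$, so the condition $\cc^{2k-1}R^k=\text{const}\cdot g$ becomes $\cc^{\,n-2k-1}A=\text{const}\cdot g$, and now Corollary~\ref{cor:vanishing-omega_i} applies to $A$ since $n\ge 2(n-2k)$. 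You instead assert that formula~(12) of \cite{Labbi-double-forms} continues to hold with $\alpha_{n,2k-1,2k}\neq0$ whenever $n\ge 2k+1$, and defer the check as ``routine.'' That is true---iterating the commutation $\cc(g\theta)-g\cc\theta=(n-p-q)\theta$ on $g^{2k-1}\omega_1$ gives $\cc^{2k-1}(g^{2k-1}\omega_1)=(2k-1)!\,(n-2)(n-3)\cdots(n-2k)\,\omega_1$, and none of the factors vanish for $n\ge 2k+1$---but it is a genuine computation, not just an appeal to a stated result, so you should include it rather than wave it away. The paper's reduction to $A$ is slightly cleaner because it stays within the literal hypotheses of the corollary; your direct route is also fine once the coefficient is checked. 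For part~(2) your justification that $\omega_0$ is constant (via constancy of $h_{2k}$) is actually more explicit than the paper's.
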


\begin{proof}
For $n \geq 4k$ the first statement follows directly from Corollary~\ref{cor:vanishing-omega_i}. When $2k < n < 4k$, we have $R^k = g^{4k-n}A$ for some $(n-2k,n-2k)$ double form $A$ (see the discussion before Corollary~\ref{cor:vanishing-omega_i}). In this case the condition $\cc^{2k-1}R^k = \text{constant}\cdot g$ is equivalent to $\cc^{\,n-2k-1}A = \text{constant}\cdot g$, and applying Corollary~\ref{cor:vanishing-omega_i} to $A$ yields $\omega_1 = 0$.

For the second part, note that when $n=2k+1$ the decomposition reduces to $R^k = g^{2k-1}(\omega_1 + g\omega_0)$. Hence $\omega_1 = 0$ exactly when $R^k$ is a constant multiple of $g^{2k-1}$, i.e., when $g$ has constant $2k$-sectional curvature.
\end{proof}

The following result connects the classical Einstein condition with the $L^2$-norm of the full curvature tensor.

\begin{proposition}\label{prop:2-and-4-Einstein}
In dimension $n > 4$, a Riemannian metric that is simultaneously $2$-Einstein and $4$-Einstein is a critical point of the functional
\[
G_2(g) = \int_M \|R\|^2\,\mathrm{dvol}_g
\]
when restricted to unit‑volume metrics.
\end{proposition}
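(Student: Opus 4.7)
The plan is to compute the gradient of $G_2$ at an Einstein metric using the two variational lemmas, show that criticality on unit-volume metrics reduces to the \emph{weakly Einstein} condition $\cc(R\circ R)=\lambda g$, and then derive this condition from the $4$-Einstein hypothesis via identity~\eqref{eq:R-identity}.

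For the first step, differentiate $G_2(g_t)=\int_M \tilde g_t(R_t,R_t)\,\mathrm{dvol}_{g_t}$ at $t=0$ in the direction of a symmetric $(1,1)$ double form $h$. Lemma~\ref{lemma:diff-metric} contributes $\tilde g_0'(R,R)=-\tilde g(F_h(R),R)$, the volume variation contributes $\tfrac12\|R\|^2\,\tilde g(g,h)$, and Lemma~\ref{lemma:diff-curvature} gives $2\tilde g(R'h,R)=-\tfrac12\tilde g((D\tilde D+\tilde D D)h,R)+\tfrac12\tilde g(F_h(R),R)$. Using the adjointness of the composition product $\tilde g(A\circ B,C)=\tilde g(A,C\circ B^T)$ combined with the symmetry of $R$, one obtains $\tilde g(F_h(R),R)=2\tilde g(h,\cc(R\circ R))$. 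Integrating by parts to convert $D\tilde D+\tilde D D$ into $\delta\tilde\delta+\tilde\delta\delta$ acting on $R$, we find
\[
\nabla G_2=-\cc(R\circ R)+\tfrac12\|R\|^2\,g-\tfrac12(\delta\tilde\delta+\tilde\delta\delta)R.
\]

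For Einstein metrics with $n>2$, Schur's theorem ensures that $\Scal$ is constant, so $\nabla\Ric=0$, and the contracted second Bianchi identity yields $\delta R=0$; by the symmetry of $R$ also $\tilde\delta R=0$, hence the divergence term vanishes. Consequently, an Einstein metric is critical for $G_2$ on unit-volume metrics if and only if $\cc(R\circ R)$ is a pointwise scalar multiple of $g$, i.e., the weakly Einstein condition holds. To verify this condition under the $4$-Einstein hypothesis, apply identity~\eqref{eq:R-identity}:
\[
\tfrac{1}{3!}\cc^3 R^2=2\cc(R\circ R)+\Scal\cdot\Ric-2\iota_{\Ric}R-2(\Ric\circ\Ric).
\]
For Einstein $\Ric=\tfrac{\Scal}{n}g$, each of $\Scal\cdot\Ric$, $\iota_{\Ric}R=\tfrac{\Scal}{n}\cc R=\tfrac{\Scal^2}{n^2}g$, and $\Ric\circ\Ric=\tfrac{\Scal^2}{n^2}(g\circ g)=\tfrac{\Scal^2}{n^2}g$ is a scalar multiple of $g$, while the $4$-Einstein assumption forces $\tfrac{1}{3!}\cc^3 R^2=\tfrac{4h_4}{n}g$. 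Solving for $\cc(R\circ R)$ yields a scalar multiple of $g$, establishing the weakly Einstein condition and completing the proof.

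The main obstacle is the careful adjointness bookkeeping in the gradient computation; the simplification $\tilde g(F_h(R),R)=2\tilde g(h,\cc(R\circ R))$ crucially uses the symmetry of $R$ (so that its transpose as a composition-product operator equals itself) and the duality between left exterior multiplication by $g$ and the contraction $\cc$. A secondary but essential point is that the divergence term genuinely vanishes for Einstein metrics, which follows from $\delta R=\tilde\delta R=0$ via Schur's theorem and the contracted second Bianchi identity (requiring $n>2$, automatic here since $n>4$).
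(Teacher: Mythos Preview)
Your proof is correct and follows the same strategy as the paper: compute $\nabla G_2$ via Lemmas~\ref{lemma:diff-metric} and~\ref{lemma:diff-curvature}, eliminate the divergence term using harmonicity of $R$ for Einstein metrics, and then invoke identity~\eqref{eq:R-identity} together with the Einstein and $4$-Einstein hypotheses to show $\cc(R\circ R)$ is proportional to $g$. One small point worth tightening: criticality on unit-volume metrics requires $\nabla G_2$ to be a \emph{constant} multiple of $g$, not merely a pointwise one, so you also need $\|R\|^2$ constant; the paper extracts this explicitly from Avez' formula~\eqref{eq:Avez}, while in your argument it follows by tracing your equation $\cc(R\circ R)=c\,g$ once you note that $c$ (built from $h_4$ and $\Scal$, both constant under the hypotheses) is itself constant.
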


\begin{proof}
Using Lemma~\ref{lemma:diff-metric} we compute the gradient of $G_2$:
\begin{align*}
G'_2.h &= -\bigl\langle F_h(R), R\bigr\rangle
        + 2\bigl\langle R'h, R\bigr\rangle
        + \tfrac12\bigl\langle \|R\|^2 g, h\bigr\rangle \\
      &= -2\bigl\langle gh, R\circ R\bigr\rangle
         + \tfrac12\bigl\langle F_h(R), R\bigr\rangle
         - \tfrac12\bigl\langle (D\tilde D+\tilde D D)(h), R\bigr\rangle
         + \tfrac12\bigl\langle \|R\|^2 g, h\bigr\rangle \\
    &=-2\langle c(R\circ R),h\rangle+\langle c(R\circ R),h\rangle-\frac{1}{2}\langle  \{D\tilde D+\tilde DD\}(h),R\rangle+\frac{1}{2}\langle g||R||^2,h\rangle\\
      &= \bigl\langle \tfrac12\|R\|^2 g - \cc(R\circ R)
         - \tfrac12(\delta\tilde\delta+\tilde\delta\delta)(R), h\bigr\rangle .
\end{align*}

For an Einstein metric the Riemann tensor is harmonic, so the divergence term vanishes. Moreover, identity \eqref{eq:R-identity} shows that $\cc(R\circ R)$ is proportional to the metric and equation \eqref{eq:Avez} shows that $||R||^2$ is constant. Restricting to variations that preserve unit volume ($\langle g,h\rangle = 0$) finishes the proof.
\end{proof}

\section{Thorpe and anti-Thorpe manifolds}

\subsection{Thorpe and anti-Thorpe manifolds in dimension $4k$}

Recall that a four‑dimensional Riemannian manifold is Einstein precisely when its curvature tensor satisfies the self‑duality condition $\ast R = R$; the anti‑self‑duality condition $\ast R = -R$ characterizes conformally flat metrics with zero scalar curvature. In this section we study analogous conditions for the higher exterior powers $R^k$ in dimension $n=4k$.

\begin{definition}\label{def:Thorpe-4k}
A Riemannian manifold $(M,g)$ of dimension $n=4k$ ($k \geq 1$) is called
\begin{enumerate}
\item a \emph{Thorpe manifold} if $\ast R^k = R^k$,
\item an \emph{anti-Thorpe manifold} if $\ast R^k = -R^k$.
\end{enumerate}
\end{definition}

Flat manifolds are both Thorpe and anti-Thorpe; manifolds of constant curvature are Thorpe. A non‑trivial family of examples is provided by products of space forms.

\begin{proposition}[\cite{Kim}]
The Riemannian product $\mathbb{S}^{2k} \times \mathbb{H}^{2k}$ is Thorpe if $k$ is even and anti-Thorpe if $k$ is odd.
\end{proposition}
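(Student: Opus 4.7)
My plan is to expand $R^k$ via the binomial theorem in the double-form algebra and then apply the Hodge star termwise. Writing $g=g_1+g_2$ for the product metric (the summands being the $(1,1)$ double forms supported on the two orthogonal factors) and using that the curvature tensor of a Riemannian product decomposes as $R=R_1+R_2$, with $R_i=\tfrac{c}{2}g_i^2$ for a constant curvature factor, one obtains (with equal curvature magnitudes on the two factors)
\[
R = \tfrac{c}{2}\bigl(g_1^2 - g_2^2\bigr).
\]
Since $g_1$ and $g_2$ live on orthogonal factors they commute in the exterior algebra of double forms, so
\[
R^k = \Bigl(\tfrac{c}{2}\Bigr)^k \sum_{j=0}^k (-1)^{k-j}\binom{k}{j}\, g_1^{2j}\, g_2^{2(k-j)}.
\]

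The heart of the argument is to show $\ast\bigl(g_1^{2j}\, g_2^{2(k-j)}\bigr) = g_1^{2(k-j)}\, g_2^{2j}$. The double Hodge star satisfies $\ast(\theta_1\otimes\theta_2)=\ast\theta_1\otimes\ast\theta_2$, and on the underlying Riemannian product the ordinary Hodge star factorises as $\ast(\alpha_1\wedge\alpha_2)=(-1)^{|\alpha_2|(n_1-|\alpha_1|)}\ast_1\alpha_1\wedge\ast_2\alpha_2$. The sign enters identically in both slots of the tensor product and squares to $+1$, yielding the clean product rule $\ast(\alpha\cdot\beta)=\ast_1\alpha\cdot\ast_2\beta$ for $\alpha$ supported on the first factor and $\beta$ on the second. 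On each $2k$-dimensional factor the identity $\ast_i(g_i^p/p!)=g_i^{2k-p}/(2k-p)!$ follows directly from $\ast\omega=\iota_\omega(g^n/n!)$ together with the inner product $\langle g^p/p!,g^p/p!\rangle=\binom{n}{p}$; the combinatorial factors $(2j)!/(2k-2j)!$ arising from the two factors cancel exactly, giving the desired formula.

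Substituting back and reindexing $j\mapsto k-j$ gives
\[
\ast R^k = \Bigl(\tfrac{c}{2}\Bigr)^k \sum_{j=0}^k (-1)^j\binom{k}{j}\, g_1^{2j}\, g_2^{2(k-j)} = (-1)^k R^k,
\]
so $M$ is Thorpe when $k$ is even and anti-Thorpe when $k$ is odd. The only delicate step is the product rule for $\ast$, where signs arising from reordering mixed wedge products must be tracked; as noted, they enter symmetrically in the two tensor slots of each double form and cancel. The remainder is routine manipulation of binomial coefficients and the standard Hodge-duality relations for exterior powers of the metric.
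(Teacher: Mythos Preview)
Your proof is correct and takes a genuinely different route from the paper's. The paper argues via $2k$-sectional curvatures: since both $R^k$ and $\ast R^k$ satisfy the first Bianchi identity, it suffices to compare $R^k(P,P)$ with $R^k(P^\perp,P^\perp)$ for all $2k$-planes $P$. You instead work entirely in the double-form algebra, writing $R=\tfrac{c}{2}(g_1^2-g_2^2)$, expanding $R^k$ binomially, and applying $\ast$ termwise via the product rule for the Hodge star on a Riemannian product.

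The key technical point you handle carefully is the identity $\ast\bigl(g_1^{2j}g_2^{2(k-j)}\bigr)=g_1^{2(k-j)}g_2^{2j}$: the reordering sign $(-1)^{|\alpha_2|(n_1-|\alpha_1|)}$ appears identically in both tensor slots and squares away (in fact each exponent is already even here, so the sign is $+1$ outright), and the factorial weights $\tfrac{(2j)!}{(2(k-j))!}$ and $\tfrac{(2(k-j))!}{(2j)!}$ from the two factors cancel. After that the reindexing $j\mapsto k-j$ gives $\ast R^k=(-1)^kR^k$ immediately.

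Your approach has the advantage of establishing the tensor identity $\ast R^k=(-1)^kR^k$ directly and uniformly, without appealing to the Bianchi identity or evaluating on individual planes. The paper's sectional-curvature argument is more geometric in flavor but, as written, asserts that ``all other planes have zero curvature,'' which is not literally true for $k\ge 2$ (mixed $2k$-planes with an even number of vectors from each factor pick up the cross term $g_1^{2j}g_2^{2(k-j)}$ and have nonzero $R^k$-curvature); the argument still goes through because $P$ and $P^\perp$ have symmetric splits, but your computation sidesteps this issue entirely.
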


\begin{proof}
Both $\ast R^k$ and $R^k$ satisfy the first Bianchi identity, so it suffices to compare their sectional curvatures. For a $2k$-plane $P$ tangent to the $\mathbb{S}^{2k}$ factor the sectional curvature of $R^k$ equals $\frac{(2k)!}{2^k}$; for a plane tangent to the $\mathbb{H}^{2k}$ factor it equals $\frac{(-1)^k(2k)!}{2^k}$. All other planes have zero curvature. The sectional curvature of $\ast R^k$ at $P$ is the sectional curvature of $R$ at the orthogonal complement of $P$. The stated parity condition follows immediately.
\end{proof}

Thorpe \cite{Thorpe} proved an important topological obstruction for such manifolds:

\begin{theorem}[\cite{Thorpe}]
Let $(M,g)$ be a compact orientable $4k$-dimensional Thorpe manifold. Then the Euler characteristic satisfies
\[
\chi(M) \geq \frac{(k!)^2}{(2k)!}\,|p_k(M)|,
\]
where $p_k(M)$ is the $k$-th Pontrjagin number of $M$.
\end{theorem}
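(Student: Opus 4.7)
The plan is to express both $\chi(M)$ and $p_k(M)$ as Chern--Weil integrals in the double forms formalism and compare their densities pointwise, using the Thorpe condition $\ast R^k=R^k$ to saturate a Cauchy--Schwarz estimate.

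For the Euler characteristic, the basic identity $\omega\cdot\eta=\langle\omega,\ast\eta\rangle\,g^n/n!$ valid for $(2k,2k)$ double forms on a $4k$-manifold, applied with $\omega=\eta=R^k$, gives $R^{2k}=\langle R^k,\ast R^k\rangle\,g^n/n!$, so that $h_{4k}=\langle R^k,\ast R^k\rangle$. The Thorpe hypothesis reduces this to $h_{4k}=\|R^k\|^2$, and the Chern--Gauss--Bonnet theorem, in the normalization of Proposition~\ref{prop-e}, yields
\[
(2\pi)^{2k}(2k)!\,\chi(M)=\int_M\|R^k\|^2\,\mathrm{dvol}_g.
\]
For the top Pontrjagin number, Chern--Weil gives $p_k(M)=(2\pi)^{-2k}\int_M P_k(R)\,\mathrm{dvol}_g$, where $P_k(R)$ is the $k$-th elementary symmetric polynomial in the squared eigenvalues of the curvature operator. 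The key algebraic step is to rewrite $P_k(R)$ in the double forms language and compare it with $\|R^k\|^2$; a Maclaurin-type inequality between elementary symmetric polynomials of the eigenvalues produces precisely the combinatorial factor $\binom{2k}{k}=(2k)!/(k!)^2$, yielding the pointwise bound $(k!)^2|P_k(R)|\le\|R^k\|^2$ on a Thorpe manifold.

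Integrating this bound and combining with the Euler formula gives $(k!)^2(2\pi)^{2k}|p_k(M)|\le (2\pi)^{2k}(2k)!\,\chi(M)$, which rearranges to $\chi(M)\ge\frac{(k!)^2}{(2k)!}|p_k(M)|$. The decisive obstacle is the explicit Chern--Weil-to-double-forms translation in the Pontrjagin step: one must identify $P_k(R)$, built from the composition product of curvature two-forms, with a trace expression involving the exterior power $R^k$ and extract the combinatorial coefficient $\binom{2k}{k}$. This amounts to an invariant-theoretic computation on $\Lambda^{2k}V^*$ for $\dim V=4k$, matching the composition-product structure natural for Pontrjagin forms with the exterior-product structure natural for the Thorpe tensor. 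Once this identification is in hand, the inequality follows at once from the pointwise Cauchy--Schwarz estimate $|\langle R^k,\ast R^k\rangle|\le\|R^k\|^2$, which is saturated precisely by the self-duality $\ast R^k=R^k$.
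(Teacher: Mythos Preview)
The paper does not prove this theorem; it is quoted with a citation to Thorpe's original 1969 paper and used only as background. So there is no in-paper argument to compare against, and I evaluate your proposal on its own merits.

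The Euler half of your outline is correct and in fact reproduces what the paper does establish around Proposition~\ref{prop:L2-minimum}: on a Thorpe $4k$-manifold $h_{4k}=\langle R^k,\ast R^k\rangle=\|R^k\|^2$, and Gauss--Bonnet gives $(2\pi)^{2k}(2k)!\,\chi(M)=\int_M\|R^k\|^2\,\mathrm{dvol}_g$. The Pontrjagin half, however, is not a proof but a wish list, and parts of it are incorrect. Your description of $P_k(R)$ as ``the $k$-th elementary symmetric polynomial in the squared eigenvalues of the curvature operator'' is wrong: the Chern--Weil representative of $p_k$ is built from the $\mathfrak{so}(n)$-valued curvature $2$-form $\Omega$ of the tangent bundle (via $\det(I+\tfrac{1}{2\pi}\Omega)$, equivalently traces $\mathrm{tr}(\Omega^{2j})$), not from the spectrum of the curvature operator on $\Lambda^2$. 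Without a correct identification of the Pontrjagin density there is nothing for a ``Maclaurin-type inequality'' to act on, and you do not supply one in any case.

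Your last sentence compounds the problem: the Cauchy--Schwarz bound $|\langle R^k,\ast R^k\rangle|\le\|R^k\|^2$ concerns the \emph{Euler} density $h_{4k}$, not the Pontrjagin density, and on a Thorpe manifold it is simply the equality you already used in the first step. It contributes nothing toward bounding $p_k$. What is actually needed is a separate identity expressing the top Pontrjagin integrand in terms of $R^k$ (in the double-forms language this involves the alternation/first-Bianchi map, cf.\ \cite{Labbi-JAUMS}; in Thorpe's original argument it proceeds via the $\pm1$-eigenspace decomposition of $\ast$ on $\Lambda^{2k}$), after which a genuine pointwise comparison with $\|R^k\|^2$ can be made. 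That is the entire content of the theorem, and your proposal does not supply it.
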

In particular, a manifold with $\chi(M)<0$ cannot admit a Thorpe metric.

The next result shows that Thorpe and anti-Thorpe metrics are absolute minimisers of the $L^2$-norm of $R^k$ when the Euler characteristic does not vanish.

\begin{proposition}\label{prop:L2-minimum}
Let $M$ be a compact orientable $4k$-manifold and consider the functional
\[
\|R^k\|_{L^2}(g) = \int_M \|R^k\|^2\,\mathrm{dvol}_g .
\]
\begin{enumerate}
\item If $\chi(M) > 0$, the functional attains its absolute minimum exactly on the Thorpe metrics, and the minimum value is $(2\pi)^{2k}(2k)!\,\chi(M)$.
\item If $\chi(M) < 0$, the functional attains its absolute minimum exactly on the anti-Thorpe metrics, and the minimum value is $-(2\pi)^{2k}(2k)!\,\chi(M)$.
\end{enumerate}
\end{proposition}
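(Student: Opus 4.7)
The plan is to reduce the problem to a pointwise self-dual / anti-self-dual decomposition of $R^k$ combined with the Chern--Gauss--Bonnet theorem in dimension $n=4k$. On $(2k,2k)$ double forms in dimension $4k$, the Hodge star $\ast$ satisfies $\ast\ast=(-1)^{2k(4k-2k)}=1$, so it is an involution and every such double form splits orthogonally as $R^k=R^k_+ + R^k_-$ with $\ast R^k_{\pm}=\pm R^k_{\pm}$. This yields the pointwise identities
\[
\|R^k\|^2=\|R^k_+\|^2+\|R^k_-\|^2,\qquad \langle R^k,\ast R^k\rangle=\|R^k_+\|^2-\|R^k_-\|^2,
\]
and hence the pointwise inequalities $\|R^k\|^2\geq \pm\langle R^k,\ast R^k\rangle$, with equality in the $+$ case iff $R^k_-=0$ (Thorpe) and in the $-$ case iff $R^k_+=0$ (anti-Thorpe).

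The second step is to identify the integrated cross-pairing with a topological invariant. The computation carried out inside the proof of Theorem~\ref{thm:gen-Lanczos} gives, for any symmetric $(p,p)$ double form $\omega$ on a vector space of dimension $n=2p$,
\[
\frac{\cc^n(\omega^2)}{n!}=\langle\omega,\ast\omega\rangle.
\]
Taking $\omega=R^k$ and $p=2k$ specializes this to the pointwise identity $h_{4k}=\langle R^k,\ast R^k\rangle$. Combined with the Chern--Gauss--Bonnet theorem in dimension $4k$,
\[
(2\pi)^{2k}(2k)!\,\chi(M)=\int_M h_{4k}\,\mathrm{dvol}_g,
\]
this gives the integral identity $(2\pi)^{2k}(2k)!\,\chi(M)=\int_M \langle R^k,\ast R^k\rangle\,\mathrm{dvol}_g$. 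The normalizing constant $(2\pi)^{2k}(2k)!$ can be anchored to the classical four-dimensional case via Avez's formula \eqref{eq:Avez}.

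Integrating the pointwise inequalities of the first paragraph and inserting this topological identity yields
\[
\int_M\|R^k\|^2\,\mathrm{dvol}_g\geq \pm(2\pi)^{2k}(2k)!\,\chi(M).
\]
Selecting the sign that matches $\mathrm{sign}(\chi(M))$ produces the nontrivial sharp lower bound in each case, and the equality discussion of the first paragraph characterizes the minimizers as Thorpe metrics when $\chi(M)>0$ and as anti-Thorpe metrics when $\chi(M)<0$.

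The only delicate step is the pointwise identification $h_{4k}=\langle R^k,\ast R^k\rangle$ together with the exact normalization of Chern--Gauss--Bonnet; both are already encoded in the algebraic formalism of Section~2 and the computations of Section~3, so the remaining argument is elementary linear algebra of the $\pm 1$ eigenspaces of $\ast$.
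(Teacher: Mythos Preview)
Your proof is correct and follows essentially the same route as the paper's own argument: both exploit the pointwise inequality $\|R^k\|^2\geq \pm\langle R^k,\ast R^k\rangle$ (the paper via the expansion $\|R^k\pm\ast R^k\|^2\geq 0$, you via the equivalent $\pm1$-eigenspace decomposition of $\ast$), identify $\langle R^k,\ast R^k\rangle$ with the Gauss--Bonnet integrand $h_{4k}$, and then integrate using the Chern--Gauss--Bonnet theorem. The only cosmetic difference is that the paper cites \cite{Labbi-JAUMS} for $h_{4k}=\langle R^k,\ast R^k\rangle$, whereas you extract it from the computation in the proof of Theorem~\ref{thm:gen-Lanczos}.
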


\begin{proof}
Since $\ast$ is an isometry,
\[
\|R^k \pm \ast R^k\|^2 = 2\|R^k\|^2 \pm 2\langle R^k, \ast R^k\rangle .
\]
The Gauss--Bonnet integrand $h_{4k}$ satisfies (see \cite{Labbi-JAUMS})
\[
h_{4k} = \ast R^{2k} = \ast(R^k R^k) = \langle R^k, \ast R^k\rangle .
\]
Hence $\|R^k\|^2 \pm h_{4k} = \tfrac12\|R^k \pm \ast R^k\|^2 \geq 0$, with equality exactly when $\ast R^k = \pm R^k$. Integrating over $M$ and using the Gauss--Bonnet theorem gives the desired inequality.
\end{proof}

\begin{remarks}
\begin{enumerate}
    \item The previous proposition generalizes \cite[Proposition 4.82]{Besse} obtained for $k=1$
\item When $\chi(M)=0$, Thorpe metrics need not be absolute minimisers. For example, the product $T^{4k}=T^2\times T^{4k-2}$ with a non‑flat metric on $T^2$ and a flat metric on $T^{4k-2}$ is Thorpe but not flat.
\end{enumerate}
\end{remarks}

The duality condition can be translated into a simple condition on the components of the orthogonal decomposition of $R^k$.

\begin{proposition}\label{prop:Thorpe-components}
On a $4k$-dimensional Riemannian manifold let  $R^k = \sum_{i=0}^{2k} g^{2k-i}\omega_i$ be the orthogonal decomposition of $R^k$. Then
\begin{enumerate}
\item $(M,g)$ is Thorpe if and only if all odd components $\omega_1,\omega_3,\dots,\omega_{2k-1}$ vanish;
\item $(M,g)$ is anti-Thorpe if and only if all even components $\omega_0,\omega_2,\dots,\omega_{2k}$ vanish.
\end{enumerate}
\end{proposition}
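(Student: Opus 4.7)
My plan is to reduce the proposition to a single eigenvalue computation for the double Hodge star on each summand of the orthogonal decomposition. The key lemma I would establish is: for every trace-free $(i,i)$ double form $\omega_i$ on an $n=2p$-dimensional Euclidean space,
\[
\star(g^{p-i}\omega_i) = (-1)^{i}\,g^{p-i}\omega_i.
\]
Granting this, applying $\star$ term by term to $R^k = \sum_{i=0}^{2k} g^{2k-i}\omega_i$ in dimension $n=4k$ (so $p = 2k$) immediately gives
\[
\star R^k = \sum_{i=0}^{2k}(-1)^{i}\,g^{2k-i}\omega_i.
\]
The Thorpe condition $\star R^k = R^k$ then forces $g^{2k-i}\omega_i = 0$ for every odd $i$; since $2k + i \leq 4k = n$, the cancellation property (Proposition~\ref{cor:cancellation}) upgrades this to $\omega_i = 0$. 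The anti-Thorpe case is entirely analogous with the parity reversed, and both converse implications are immediate from the same computation.

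To prove the lemma I would apply the Hodge-star formula already invoked in the proof of Theorem~\ref{thm:gen-Lanczos}, namely
\[
\star\omega = \sum_{r=0}^{p}(-1)^{r+p}\,\frac{g^r}{r!}\,\frac{\cc^r\omega}{r!}\qquad(n=2p,\ \omega\in\mathcal{D}^{p,p}V^*),
\]
to $\omega = g^{p-i}\omega_i$. A short induction based on the $\mathfrak{sl}_2$-type commutation rule $\cc(g\alpha) = g\,\cc\alpha + (n - 2\deg\alpha)\alpha$ for symmetric double forms, combined with $\cc\omega_i = 0$, produces the closed formula
\[
\cc^{r}(g^{p-i}\omega_i) = \frac{(p-i+r)!}{(p-i-r)!}\,g^{p-i-r}\omega_i \qquad (0 \leq r \leq p-i)
\]
in dimension $n=2p$. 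All the $g$-powers in the star formula then reassemble into a single $g^{p-i}$, yielding
\[
\star(g^{p-i}\omega_i) = (-1)^{p}\!\Bigl[\sum_{r=0}^{p-i}(-1)^{r}\binom{p-i+r}{r}\binom{p-i}{r}\Bigr] g^{p-i}\omega_i.
\]

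The argument closes with the combinatorial identity
\[
\sum_{r=0}^{s}(-1)^{r}\binom{s+r}{r}\binom{s}{r} = (-1)^{s},
\]
which is the value $P_s(-1)$ of the $s$-th Legendre polynomial and is easily verified by induction on $s$. Together with the leading factor $(-1)^p$ it delivers the eigenvalue $(-1)^{2p-i} = (-1)^{i}$ predicted by the lemma. The main technical hurdle I anticipate is producing the closed contraction formula for $\cc^{r}(g^{p-i}\omega_i)$ cleanly; once that is in place the remainder of the argument (the combinatorial identity, the assembly of $\star R^k$, and the cancellation step) is entirely routine.
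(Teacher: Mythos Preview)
Your argument is correct and follows the same route as the paper: both compute $\star$ on each summand of the orthogonal decomposition to obtain $\star R^k=\sum_i(-1)^i g^{2k-i}\omega_i$ and then invoke uniqueness. The only difference is that the paper quotes this eigenvalue formula as \cite[Theorem~4.3]{Labbi-double-forms}, whereas you re-derive it from the Hodge-star expansion and the Legendre-type identity; once you have the formula, uniqueness of the decomposition already gives $\omega_i=0$ directly, so your cancellation step is unnecessary.
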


\begin{proof}
From \cite[Theorem 4.3]{Labbi-double-forms} we have $\ast R^k = \sum_{i=0}^{2k} (-1)^i g^{2k-i}\omega_i$. Therefore
\[
R^k \pm \ast R^k = \sum_{i=0}^{2k} (1 \pm (-1)^i) g^{2k-i}\omega_i .
\]
The result follows from the uniqueness of the orthogonal decomposition.
\end{proof}

\subsection{Generalized Thorpe conditions in even dimensions}

Theorem~5.8 of \cite{Labbi-double-forms} motivates the following extension of the definition to all even dimensions.

\begin{theorem}[{\cite[Theorem 5.8]{Labbi-double-forms}}]\label{thm:general-duality}
Let $(M,g)$ be a Riemannian manifold of even dimension $n=2r$ and let $k$ be an integer with $2k \leq r$. Write $R^k = \sum_{i=0}^{2k} g^{2k-i}\omega_i$. Then
\begin{enumerate}
\item $\ast\!\left(\frac{g^{r-2k}}{(r-2k)!}R^k\right) = \frac{g^{r-2k}}{(r-2k)!}R^k$ if and only if all odd components $\omega_1,\omega_3,\dots,\omega_{2k-1}$ vanish;
\item $\ast\!\left(\frac{g^{r-2k}}{(r-2k)!}R^k\right) = -\frac{g^{r-2k}}{(r-2k)!}R^k$ if and only if all even components $\omega_0,\omega_2,\dots,\omega_{2k}$ vanish.
\end{enumerate}
\end{theorem}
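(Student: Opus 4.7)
The plan is to reduce the statement to a termwise Hodge‑star eigenvalue computation on the irreducible pieces of the orthogonal decomposition of $R^k$. First I apply Proposition~\ref{prop:orth-decomp} to $R^k\in\mathcal{D}^{2k,2k}V^*$ (available since $n=2r\ge 4k$) to obtain unique trace‑free components $\omega_i\in\mathcal{D}^{i,i}V^*$, $0\le i\le 2k$, satisfying $R^k=\sum_{i=0}^{2k} g^{2k-i}\omega_i$, so that
\[
\frac{g^{r-2k}}{(r-2k)!}R^k=\frac{1}{(r-2k)!}\sum_{i=0}^{2k}g^{r-i}\omega_i,
\]
an $(r,r)$ double form whose $i$‑th summand $g^{r-i}\omega_i$ sits at the central degree of the chain $\omega_i,g\omega_i,\dots,g^{n-2i}\omega_i$.

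The crucial technical step is the pointwise identity
\[
\ast(g^{r-i}\omega_i)=(-1)^i\,g^{r-i}\omega_i
\]
for every trace‑free $\omega_i\in\mathcal{D}^{i,i}V^*$ in dimension $n=2r$. I would prove it by applying formula~(15) of~\cite{Labbi-double-forms} (the identity already used in the excerpt to derive~\eqref{eq:Hodge-square}) to the $(r,r)$ form $g^{r-i}\omega_i/(r-i)!$ itself: each contraction $\cc^{j}$ of that form can be evaluated in closed form because $\omega_i$ is trace‑free, so the right‑hand side of formula~(15) collapses to a scalar multiple of $g^{r-i}\omega_i$. The remaining scalar then equals $(-1)^i$ by the identity
\[
\sum_{j=0}^{m}(-1)^j\binom{m}{j}\binom{m+j}{j}=(-1)^m,\qquad m=r-i,
\]
which is the evaluation $P_m(-1)=(-1)^m$ of a Legendre polynomial.

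With this in hand, $\ast$ acts on each summand of $g^{r-2k}R^k/(r-2k)!$ by the sign $(-1)^i$, and consequently
\[
\ast\!\Bigl(\frac{g^{r-2k}}{(r-2k)!}R^k\Bigr)\mp\frac{g^{r-2k}}{(r-2k)!}R^k
=\frac{1}{(r-2k)!}\sum_{i=0}^{2k}\bigl((-1)^i\mp 1\bigr)g^{r-i}\omega_i,
\]
the upper (respectively lower) sign isolating the terms with $i$ odd (respectively $i$ even). Uniqueness of the orthogonal decomposition together with Proposition~\ref{cor:cancellation} applied to each $g^{r-i}\omega_i$ (valid since $i+(r-i)\le n=2r$) shows that the left‑hand side vanishes if and only if the corresponding $\omega_i$'s do, which delivers both (1) and (2).

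The main obstacle is the eigenvalue identity $\ast(g^{r-i}\omega_i)=(-1)^i g^{r-i}\omega_i$. Structurally it reflects the fact that $\ast$ commutes with the $\mathfrak{sl}_2$‑action on $\mathcal{D}V^*$ generated by $g\cdot$ and $\cc$ and must therefore act by a sign on each central weight space of an irreducible chain; pinning that sign down explicitly is where the Legendre/combinatorial identity intervenes. A more conceptual route would develop the full $\mathfrak{sl}_2$ representation theory on $\mathcal{D}V^*$, but the direct computation via formula~(15) is the quickest path within the paper's framework.
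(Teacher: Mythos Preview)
Your proposal is correct. The paper itself does not prove this theorem---it is quoted verbatim from \cite[Theorem~5.8]{Labbi-double-forms}---but your argument matches exactly the pattern the paper uses for the special case $n=4k$ in Proposition~\ref{prop:Thorpe-components}, where the key eigenvalue identity $\ast(g^{2k-i}\omega_i)=(-1)^i g^{2k-i}\omega_i$ is simply cited from \cite[Theorem~4.3]{Labbi-double-forms} and then combined with uniqueness of the orthogonal decomposition.

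Your contribution beyond the paper is that you actually \emph{derive} the eigenvalue identity $\ast(g^{r-i}\omega_i)=(-1)^i g^{r-i}\omega_i$ rather than quoting it: applying formula~(15) of \cite{Labbi-double-forms} to the $(r,r)$ form $g^{r-i}\omega_i$ and using $\cc^s(g^{m}\omega_i)=\frac{(m+s)!}{(m-s)!}g^{m-s}\omega_i$ (with $m=r-i$, since $n-2i-m=m$) indeed reduces the sign to $(-1)^r\sum_{s=0}^{m}(-1)^s\binom{m}{s}\binom{m+s}{s}=(-1)^{r+m}=(-1)^i$, and your Legendre-polynomial identification $P_m(-1)=(-1)^m$ is the clean way to evaluate that sum. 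One minor point: in the final step, uniqueness of the orthogonal decomposition (Proposition~\ref{prop:orth-decomp}) already forces each $\omega_i=0$ once $\sum_i((-1)^i\mp1)g^{r-i}\omega_i=0$, so the appeal to Proposition~\ref{cor:cancellation} is redundant (though not wrong).
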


\begin{definition}\label{def:2k-Thorpe}
Let $(M,g)$ be a Riemannian manifold of even dimension $n=2r$ and let $k$ be an integer with $2 \leq 2k \leq r$. We say that $(M,g)$ is
\begin{enumerate}
\item \emph{$(2k)$-Thorpe} if $\ast\!\left(g^{r-2k}R^k\right) = g^{r-2k}R^k$;
\item \emph{$(2k)$-anti-Thorpe} if $\ast\!\left(g^{r-2k}R^k\right) = -g^{r-2k}R^k$.
\end{enumerate}
\end{definition}

For $k=1$ we recover Einstein metrics ($2$-Thorpe) and conformally flat metrics with zero scalar curvature ($2$-anti-Thorpe). When $r=2k$ ($n=4k$) the definition coincides with Definition~\ref{def:Thorpe-4k}.

\begin{examples}\label{ex:Thorpe-examples}
\begin{enumerate}
\item Space forms of dimension $2r$ are $(2k)$-Thorpe for every $k$ with $2 \leq 2k \leq r$.
\item Any manifold of even dimension for which the tensor $R^k$ has constant sectional curvature is $(2k)$-Thorpe.
\item Flat manifolds (or manifolds with $R^k=0$) are both $(2k)$-Thorpe and $(2k)$-anti-Thorpe.
\item The Riemannian product of a space form of dimension $r$ with itself is $(2k)$-Thorpe for all admissible $k$.
\item The torus $T^{2r}=T^2\times T^{2r-2}$ with a non‑flat metric on $T^2$ and a flat metric on $T^{2r-2}$ satisfies $R^k=0$ for all $k \geq 2$, hence it is both $(2k)$-Thorpe and $(2k)$-anti-Thorpe.
\end{enumerate}
\end{examples}

\begin{proposition}\label{prop:Thorpe-implications}
\begin{itemize}
\item Every $(2k)$-Thorpe manifold is $(2k)$-Einstein; in particular, its Gauss--Bonnet curvature $h_{2k}$ is constant if $n>2k$.
\item Every $(2k)$-anti-Thorpe manifold has vanishing Gauss--Bonnet curvature: $h_{2k}=0$.
\end{itemize}
\end{proposition}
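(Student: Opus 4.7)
The plan is to reduce both statements to the component description of $(2k)$-Thorpe and $(2k)$-anti-Thorpe metrics provided by Theorem~\ref{thm:general-duality}, and then to invoke the results on $(2k)$-Einstein metrics and the Gauss--Bonnet scalar established earlier.

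For the first assertion, suppose $(M,g)$ is $(2k)$-Thorpe. By Theorem~\ref{thm:general-duality}, all odd components in the orthogonal decomposition $R^k=\sum_{i=0}^{2k} g^{2k-i}\omega_i$ vanish; in particular $\omega_1=0$. Since we are in the regime $n=2r\geq 4k\geq 2k+1$, Proposition~\ref{prop:2k-Einstein-char} identifies the vanishing of $\omega_1$ with the $(2k)$-Einstein condition. The constancy of $h_{2k}$ (for $n>2k$) then follows from the remark made after Definition~\ref{def:2k-Einstein}: the divergence-free character of $T_{2k}$ forces any $(2k)$-Einstein metric to have constant Gauss--Bonnet curvature.

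For the second assertion, I would show that $h_{2k}$ is a nonzero scalar multiple of the $\omega_0$ component. Because $g^{2k}$ itself is (trivially) the $i=0$ piece of its own orthogonal decomposition, the mutual orthogonality of the graded pieces $g^{2k-i}\omega_i$ (with $\omega_i$ trace-free) gives
\[
(2k)!\,h_{2k}=\langle R^k,g^{2k}\rangle=\omega_0\,\|g^{2k}\|^2=((2k)!)^2\binom{n}{2k}\,\omega_0,
\]
so $h_{2k}=(2k)!\binom{n}{2k}\,\omega_0$. For a $(2k)$-anti-Thorpe manifold, Theorem~\ref{thm:general-duality} forces all even components to vanish, in particular $\omega_0=0$, whence $h_{2k}=0$.

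No serious technical obstacle arises, since both parts are essentially dictionary translations via Theorem~\ref{thm:general-duality}. The only delicate point is the orthogonality of distinct graded pieces $g^{2k-i}\omega_i$ used in the second part. This is built into Proposition~\ref{prop:orth-decomp}, but can also be verified directly from the identity $\cc\,g^a=a(n-a+1)g^{a-1}$ together with $\cc\omega_i=0$ for $i\geq 1$: iterating yields $\langle g^{2k-i}\omega_i,g^{2k}\rangle=\langle\omega_i,\cc^{2k-i}g^{2k}\rangle=C_i\langle\cc^i\omega_i,1\rangle$, which vanishes for every $i\geq 1$.
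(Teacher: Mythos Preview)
Your proof is correct and follows essentially the same route as the paper's own (very terse) argument, which simply says ``both statements follow directly from Theorem~\ref{thm:general-duality} and the relation between the components $\omega_i$ and the contractions of $R^k$.'' You have made that relation explicit: $\omega_1=0\Leftrightarrow(2k)$-Einstein (via Proposition~\ref{prop:2k-Einstein-char}) and $h_{2k}=(2k)!\binom{n}{2k}\omega_0$. One minor remark: in your final orthogonality check the chain $\langle\omega_i,\cc^{2k-i}g^{2k}\rangle=C_i\langle\cc^i\omega_i,1\rangle$ is slightly awkward, since adjointness gives $\langle g^i,\omega_i\rangle=\langle g^{i-1},\cc\omega_i\rangle=0$ already after one step; but the conclusion is of course the same.
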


\begin{proof}
Both statements follow directly from Theorem~\ref{thm:general-duality} and the relation between the components $\omega_i$ and the contractions of $R^k$.
\end{proof}

\begin{corollary}\label{cor:hypersurface-obstruction}
For $n \geq 4k$ there are no compact embedded hypersurfaces in $\mathbb{R}^{n+1}$ that are $(2k)$-anti-Thorpe. Moreover, the round sphere is the only compact embedded $(2k)$-Thorpe hypersurface in $\mathbb{R}^{n+1}$.
\end{corollary}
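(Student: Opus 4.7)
The plan is to split the corollary into its two assertions and reduce each, via Proposition~\ref{prop:Thorpe-implications}, to a statement about the Gauss--Bonnet curvature $h_{2k}$ of the hypersurface; then apply either the Ros--Korevaar rigidity result cited immediately after Theorem~\ref{thm:grad-H2k} or a direct supporting-sphere argument. Throughout, write $\kappa_1,\ldots,\kappa_n$ for the principal curvatures and recall that the Gauss equation, expressed in the double-form formalism, reads $R=\tfrac12\,II\cdot II$, so that
\[
R^k=\frac{1}{2^k}\,II^{2k},
\qquad
h_{2k}=\frac{\cc^{2k}R^k}{(2k)!}=c_{n,k}\,\sigma_{2k}(\kappa_1,\ldots,\kappa_n),
\]
for an explicit positive constant $c_{n,k}$, where $\sigma_{2k}$ is the $(2k)$-th elementary symmetric function.

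\textbf{Thorpe case.} Since $n\ge 4k$ implies $2\le 2k<n$, Proposition~\ref{prop:Thorpe-implications} shows that every $(2k)$-Thorpe metric is $(2k)$-Einstein. The corollary of Ros--Korevaar stated after Theorem~\ref{thm:grad-H2k} then forces the hypersurface to be the round sphere. This step is essentially citation.

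\textbf{Anti-Thorpe case.} Here Proposition~\ref{prop:Thorpe-implications} gives $h_{2k}\equiv 0$ on $M$, equivalently $\sigma_{2k}(\kappa_1,\ldots,\kappa_n)\equiv 0$. I would derive a contradiction by exhibiting a single point where $\sigma_{2k}$ is strictly positive. Fix any point $p_{0}\in M$ at which the Euclidean distance to a chosen interior point is maximal; such a point exists by compactness of $M$. At $p_0$ the hypersurface is tangent from the inside to a sphere of radius $r=|p_0|$, so, with the outward orientation, every principal curvature satisfies $\kappa_i(p_0)\ge 1/r>0$. Consequently $\sigma_{2k}(p_0)>0$, giving $h_{2k}(p_0)\ne 0$ and contradicting the anti-Thorpe condition.

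\textbf{Expected obstacle.} No step is deep: the substantive input for the Thorpe part is the Ros--Korevaar theorem, already invoked in the text, while the anti-Thorpe part is a standard supporting-sphere argument. The only item deserving care is the identification $h_{2k}=c_{n,k}\,\sigma_{2k}$ through the Gauss equation in the double-form language; this is a routine computation (the $k$-fold exterior square of $II\cdot II$ contracts to $(2k)!\,\sigma_{2k}$ up to a $2^{k}$-factor), and I would simply record it and refer to the standard identity for the Lovelock/Gauss--Bonnet curvatures of hypersurfaces.
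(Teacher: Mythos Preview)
Your proof is correct. For the Thorpe assertion you proceed exactly as the paper does: Proposition~\ref{prop:Thorpe-implications} gives that the metric is $(2k)$-Einstein, and then the Ros--Korevaar corollary forces the round sphere.

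For the anti-Thorpe assertion your route differs from the paper's. The paper treats both cases uniformly: from Proposition~\ref{prop:Thorpe-implications} one gets that $h_{2k}$ (equivalently $H_{2k}$) is constant---equal to zero in the anti-Thorpe case---and then invokes Ros--Korevaar, which would force the hypersurface to be a round sphere, contradicting $H_{2k}=0$. You instead bypass Ros--Korevaar entirely: once $h_{2k}\equiv 0$ is known, the supporting-sphere argument at a point of maximal distance gives all $\kappa_i>0$ there, hence $\sigma_{2k}>0$, an immediate contradiction. Your argument is strictly more elementary for this half (it uses only compactness and the comparison principle, not the Alexandrov-type moving-planes machinery behind Ros--Korevaar), while the paper's approach has the virtue of dispatching both assertions with a single citation. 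Either is fine; your identification $h_{2k}=c_{n,k}\sigma_{2k}$ via $R^k=2^{-k}II^{2k}$ is exactly the ``up to a constant factor'' the paper alludes to.
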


\begin{proof}
On a hypersurface, $h_{2k}$ coincides (up to a constant factor) with the $2k$-th mean curvature $H_{2k}$. The result follows from Proposition~\ref{prop:Thorpe-implications} and the characterisation of hypersurfaces with constant $H_{2k}$ by Ros and Korevaar \cite{Ros-Kor}.
\end{proof}

The following theorem gives a geometric obstruction for the existence of Thorpe or anti-Thorpe metrics.

\begin{theorem}[{\cite[Theorem 6.7]{Labbi-double-forms}}]\label{thm:obstruction}
Let $(M,g)$ be a $(2k)$-Thorpe (respectively $(2k)$-anti-Thorpe) manifold of even dimension $n \geq 4k \geq 4$. Then the $4k$-th Gauss--Bonnet curvature satisfies
\[
h_{4k} \geq 0 \quad (\text{respectively } h_{4k} \leq 0),
\]
with equality if and only if $R^k = 0$ (i.e., the manifold is $k$-flat).
\end{theorem}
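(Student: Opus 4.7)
The plan is to reduce the theorem to the classical critical-dimension argument by introducing the auxiliary $(r,r)$ double form $\omega := g^{r-2k}R^k$. With this choice, the $(2k)$-Thorpe (resp.\ anti-Thorpe) hypothesis is precisely $\star\omega = \omega$ (resp.\ $\star\omega = -\omega$), so the trivial positivity $\|\omega \mp \star\omega\|^2 \geq 0$ yields immediately $\pm\langle\omega,\star\omega\rangle = \|\omega\|^2 \geq 0$. Everything therefore hinges on identifying $\langle\omega,\star\omega\rangle$ with a strictly positive multiple of $h_{4k}$; this mirrors the argument used in the proof of Proposition~\ref{prop:L2-minimum} in dimension $4k$, which is the base case $r=2k$.

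For the key identification I would proceed as follows. Since $\star\omega = \iota_{\omega}(g^n/n!)$ and $\iota_{\omega}$ is the adjoint of $\mu_{\omega}$, one has
\[
\langle\omega,\star\omega\rangle = \langle\omega^2, g^n/n!\rangle = \frac{\cc^n(g^{2(r-2k)}R^{2k})}{n!}.
\]
Transferring the $g$-powers across the inner product via the adjointness $\cc = \iota_g \leftrightarrow \mu_g$, together with the elementary identity $\cc^j(g^n/n!) = j!\, g^{n-j}/(n-j)!$ (obtained by iterating $\cc(g^p/p!)=(n-p+1)g^{p-1}/(p-1)!$, which is itself a one-line consequence of $\langle g^p/p!,g^p/p!\rangle=\binom{n}{p}$), produces the clean formula $\langle\omega,\star\omega\rangle = (2r-4k)!\, h_{4k}$. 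The multiplicative constant is strictly positive, and the only real obstacle here is the combinatorial bookkeeping of the $g$-power manipulations; there is no deeper difficulty.

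Combining with the self-duality relation, the Thorpe case gives $\|\omega\|^2 = (2r-4k)!\, h_{4k} \geq 0$, while the anti-Thorpe case gives $-\|\omega\|^2 = (2r-4k)!\, h_{4k} \leq 0$. For the equality statement, the vanishing of $h_{4k}$ forces $\|\omega\|^2 = 0$, hence $\omega = g^{r-2k}R^k = 0$. Because the degree count $(r-2k)+2(2k) = r+2k \leq 2r = n$ holds under the standing hypothesis $r \geq 2k$, Proposition~\ref{cor:cancellation} gives $R^k = 0$, i.e.\ $(M,g)$ is $k$-flat; the converse is trivial.
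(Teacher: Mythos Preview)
Your proof is correct and follows essentially the same route as the paper: setting $\omega = g^{r-2k}R^k$, both arguments compute $\langle\omega,\star\omega\rangle = (n-4k)!\,h_{4k}$ (the paper phrases this as $\ast(\omega^2)$, which is the same scalar) and then use the (anti-)self-duality hypothesis to identify this with $\pm\|\omega\|^2$, finishing with the cancellation property Proposition~\ref{cor:cancellation} for the equality case. Your degree check $r-2k+4k\le n$ is exactly the one the paper relies on.
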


\begin{proof}
Write $n=2r$. Using $\ast(g^{r-2k}R^k) = \pm g^{r-2k}R^k$, we compute
\[
(n-4)!\,h_{4k} = \ast\!\left(g^{n-4k}R^{2k}\right)
               = \ast\!\left((g^{r-2k}R^k)(g^{r-2k}R^k)\right)
               = \pm \ast\!\left(\ast(g^{r-2k}R^k)\,g^{r-2k}R^k\right)
               = \pm \|g^{r-2k}R^k\|^2 .
\]
The right‑hand side is non‑negative (resp. non‑positive) and vanishes exactly when $g^{r-2k}R^k=0$, which by Proposition~\ref{cor:cancellation} is equivalent to $R^k=0$.
\end{proof}

The duality condition can also be expressed directly through contractions of $R^k$.

\begin{proposition}\label{prop:contraction-criterion}
Let $(M,g)$ be a Riemannian manifold of even dimension $n=2r$ and let $2 \leq 2k \leq r$. Then
\begin{enumerate}
\item $(M,g)$ is $(2k)$-Thorpe if and only if
  \[
  \sum_{s=1}^{2k} \frac{(-1)^s}{s!}\,
      \frac{g^{s-1}\cc^s R^k}{(r-2k+s)!} = 0 .
  \]
\item $(M,g)$ is $(2k)$-anti-Thorpe if and only if
  \[
  \frac{2R^k}{(r-2k)!}
    = \sum_{s=1}^{2k} \frac{(-1)^{s-1}}{s!}\,
      \frac{g^{s}\cc^s R^k}{(r-2k+s)!} = 0 .
  \]
\end{enumerate}
\end{proposition}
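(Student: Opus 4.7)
The plan is to reduce the duality conditions directly to the stated identities by expanding the Hodge star of $g^{r-2k}R^k/(r-2k)!$ in successive contractions of $R^k$. By definition, $(M,g)$ is $(2k)$-Thorpe (respectively $(2k)$-anti-Thorpe) precisely when
\[
\ast\Bigl(\frac{g^{r-2k}R^k}{(r-2k)!}\Bigr)=\epsilon\,\frac{g^{r-2k}R^k}{(r-2k)!}
\]
with $\epsilon=+1$ (respectively $-1$). I would expand the left-hand side as a sum indexed by $s=0,\ldots,2k$, observe that the $s=0$ contribution coincides with $g^{r-2k}R^k/(r-2k)!$ itself, and then use cancellation (respectively addition) with the right-hand side to reduce the duality condition to an identity on the remaining $s\ge 1$ terms. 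A final application of Proposition~\ref{cor:cancellation} clears a leading power of $g$ to yield the stated formulas.

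The analytical core is an extension of formula (15) of \cite{Labbi-double-forms} to arbitrary exponents: for any symmetric $(p,p)$ double form $\omega$ on $(V,g)$ and every integer $q$ with $0\le q\le n-2p$,
\[
\ast\Bigl(\frac{g^q\omega}{q!}\Bigr)=\sum_{s=0}^{p}(-1)^{s+p}\frac{g^{n-2p-q+s}}{(n-2p-q+s)!}\frac{\cc^s\omega}{s!}.
\]
For $q=n-2p$ this is formula (15) itself. I would derive the general case via the orthogonal decomposition $\omega=\sum_{j=0}^{p}g^{p-j}\omega_j$ of Proposition~\ref{prop:orth-decomp}. On each trace-free component formula (15) collapses to the single term $\ast(g^{n-2j}\omega_j/(n-2j)!)=(-1)^j\omega_j$; combining this with $\ast^{2}=\mathrm{Id}$ on $(p,p)$ double forms and the commutation relation $\ast\circ\mu_g=\cc\circ\ast$ (both immediate from $\ast\omega=\iota_\omega(g^n/n!)$) produces $\ast(g^{a}\omega_j/a!)=(-1)^j\,g^{n-2j-a}\omega_j/(n-2j-a)!$. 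Summing over $j$ and re-expressing the answer back in terms of $\cc^s\omega$ gives the displayed formula. A naive descending induction on $q$ via $\cc G_q=(q+1)G_{q+1}$ would not suffice, since $\cc$ has a non-trivial kernel on higher-degree pieces; the trace-free decomposition is precisely what sidesteps this difficulty.

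Specialising to $\omega=R^k$, $p=2k$, $q=r-2k$ and $n=2r$, so that $n-2p-q=r-2k$, gives
\[
\ast\Bigl(\frac{g^{r-2k}R^k}{(r-2k)!}\Bigr)=\sum_{s=0}^{2k}(-1)^{s}\frac{g^{r-2k+s}}{(r-2k+s)!}\frac{\cc^s R^k}{s!}.
\]
In the Thorpe case the $s=0$ term cancels against the left-hand side, leaving
\[
g^{r-2k+1}\sum_{s=1}^{2k}\frac{(-1)^s}{s!}\frac{g^{s-1}\cc^s R^k}{(r-2k+s)!}=0.
\]
The bracketed sum is a $(2k-1,2k-1)$ double form and $2(2k-1)+(r-2k+1)=r+2k-1\le n$, so Proposition~\ref{cor:cancellation} removes the factor $g^{r-2k+1}$ and delivers the Thorpe identity. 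In the anti-Thorpe case the $s=0$ term instead adds, and after rearrangement one obtains
\[
g^{r-2k}\Bigl[\frac{2R^k}{(r-2k)!}-\sum_{s=1}^{2k}\frac{(-1)^{s-1}}{s!}\frac{g^{s}\cc^s R^k}{(r-2k+s)!}\Bigr]=0,
\]
whose bracket is a $(2k,2k)$ double form with $2(2k)+(r-2k)=r+2k\le n$; a second application of Proposition~\ref{cor:cancellation} delivers the anti-Thorpe identity. The main technical obstacle throughout is establishing the extended Hodge star formula above; once that is secured, the remainder of the argument is routine bookkeeping with powers of $g$ and contractions.
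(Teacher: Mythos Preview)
Your proposal is correct and follows essentially the same route as the paper: expand $\ast\bigl(g^{r-2k}R^k/(r-2k)!\bigr)$ as a sum over contractions, peel off the $s=0$ term, and invoke the cancellation property (Proposition~\ref{cor:cancellation}) to strip the leading power of $g$. The only difference is that the paper simply \emph{cites} formula~(15) of \cite{Labbi-double-forms} for the expansion $\ast\bigl(g^{q}\omega/q!\bigr)=\sum_{s}(-1)^{s+p}\frac{g^{n-2p-q+s}}{(n-2p-q+s)!}\frac{\cc^{s}\omega}{s!}$, which is already stated there in the general form you need (and is used elsewhere in the present paper with several different values of $q$); you re-derive it from scratch via the trace-free decomposition, which is sound but unnecessary.
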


\begin{proof}
Formula (15) in \cite{Labbi-double-forms} shows that
$$  \str \Bigl( \frac{g^{r-2k}R^k}{(r-2k)!}\Bigr)= g^{r-2k}\sum_{s=0}^{2k}\frac{(-1)^s}{s!}\frac{g^{s}\cc^s R^k}{(r-2k+s)!}=\frac{g^{r-2k}R^k}{(r-2k)!}+g^{r-2k}\sum_{s=1}^{2k}\frac{(-1)^s}{s!}\frac{g^{s}\cc^s R^k}{(r-2k+s)!}.$$
Since $r-2k\leq n-4k$, it follows from the above identity and  from Proposition~\ref{cor:cancellation} that $2k$-Thorpe condition is equivalent to 
$$\sum_{s=1}^{2k}\frac{(-1)^s}{s!}\frac{g^{s}\cc^s R^k}{(r-2k+s)!}=0.$$
One more application of Proposition~\ref{cor:cancellation} yields the required result as $1\leq n-2(2k-1)$.\\
The proof of the second part is completely similar.
\end{proof}

\begin{remark}
The formulas in Proposition~\ref{prop:contraction-criterion} can be used to extend the notions of $(2k)$-Thorpe and $(2k)$-anti-Thorpe to arbitrary dimensions $n > 2k$. For example, for $n>4$ one can define a metric to be $4$-Thorpe if
\[
\cc R^2 = \frac{g\,\cc^2R^2}{n-4}
          - \frac{2g^2\cc^3R^2}{3(n-2)(n-4)}
          + \frac{g^3\cc^4R^2}{3n(n-2)(n-4)} .
\]
\end{remark}

\subsection{Harmonicity of Thorpe tensors}

An important feature of $(2k)$-Thorpe and $(2k)$-anti-Thorpe metrics is that the tensor $R^k$ becomes harmonic.

\begin{proposition}\label{prop:Thorpe-harmonic}
Let $(M,g)$ be a Riemannian manifold of even dimension $n=2r$ and let $2 \leq 2k \leq r$. If $(M,g)$ is $(2k)$-Thorpe (resp. $(2k)$-anti-Thorpe), then $R^k$ is a harmonic double form.
\end{proposition}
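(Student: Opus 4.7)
The plan is to combine the second Bianchi identity for $R$ with the duality assumption, using the intertwining between the exterior differentials $D,\tilde D$ on double forms and their adjoints $\delta,\tilde\delta$ through the Hodge star $\str$.

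First I would set $\omega:=g^{r-2k}R^k$, a symmetric $(r,r)$ double form in middle degree on $(M^{2r},g)$. Since $g$ is parallel ($Dg=\tilde Dg=0$) and $R$ satisfies the second Bianchi identity ($DR=\tilde DR=0$), and since $D,\tilde D$ act as derivations on the exterior algebra of double forms, one has
\[
DR^k=kR^{k-1}\cdot DR=0,\qquad D\omega=g^{r-2k}\,DR^k=0,
\]
and similarly $\tilde D\omega=0$. The hypothesis reads $\str\omega=\varepsilon\omega$ with $\varepsilon=\pm1$. Invoking the standard Hodge intertwining $\delta=\pm\str D\str$ on double forms (which follows factorwise from the classical form version together with the relation $\str(\theta_1\otimes\theta_2)=\str\theta_1\otimes\str\theta_2$ recalled in \S\ref{Hodge}), I would then compute
\[
\delta\omega=\pm\str D(\str\omega)=\pm\varepsilon\,\str(D\omega)=0,
\]
and likewise $\tilde\delta\omega=0$, so $\omega$ is both closed and co-closed, hence harmonic.

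The second step is to descend from $\omega$ back to $R^k$ via the cancellation property. Because $g$ is parallel, the codifferentials $\delta,\tilde\delta$ commute with left multiplication by any exterior power of $g$, so
\[
g^{r-2k}\,\delta R^k=\delta\omega=0,\qquad g^{r-2k}\,\tilde\delta R^k=\tilde\delta\omega=0.
\]
Now $\delta R^k$ has bidegree $(2k-1,2k)$ and $(2k-1)+2k+(r-2k)=2k+r-1\le 2r=n$, so Proposition~\ref{cor:cancellation} yields $\delta R^k=0$; the same argument gives $\tilde\delta R^k=0$. Combined with $DR^k=\tilde DR^k=0$, this shows that $R^k$ is a harmonic double form.

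The only genuinely non-formal step is the Hodge intertwining identity $\delta=\pm\str D\str$ on double forms, with the correct sign depending on the bidegrees; everything else is derivation/parallelism bookkeeping for the multiplication by $g^{r-2k}$ and a routine bidegree check for the cancellation lemma.
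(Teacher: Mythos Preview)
Your proof is correct and follows essentially the same approach as the paper: both use the Hodge intertwining $\str\delta\str=\pm D$ together with the self-duality hypothesis and $D\omega=0$ to deduce $\delta\omega=0$, then pull $\delta$ past $g^{r-2k}$ and apply the cancellation lemma with the same bidegree check. The paper cites a formula from \cite{Labbi-Lapl-Lich} for the commutation $\delta(g^{r-2k}R^k)=g^{r-2k}\delta R^k$, whereas you argue it directly from parallelism of $g$, but the logic is otherwise identical.
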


\begin{proof}
Apply the operator $\ast\delta$ to the equation $\ast(g^{r-2k}R^k) = \pm g^{r-2k}R^k$. Using the identity $\ast\delta\ast = \pm D$ we obtain
\[
\pm \ast\delta(g^{r-2k}R^k) = \ast\delta\ast(g^{r-2k}R^k)
                           = \pm D(g^{r-2k}R^k) = 0 .
\]
Hence $\delta(g^{r-2k}R^k)=0$. Formula (46) of \cite{Labbi-Lapl-Lich} implies $g^{r-2k}\delta R^k=0$, and by the cancellation property (Proposition~\ref{cor:cancellation}) we conclude $\delta R^k=0$ (note that $r-2k+2k-1+2k < 2r+1$). Since $R^k$ is closed by the second Bianchi identity, it is harmonic.
\end{proof}

Harmonicity together with positivity assumptions on the curvature yields strong rigidity results.

\begin{theorem}[{\cite[Theorem B]{Labbi-Lapl-Lich}}]\label{thm:Rk-harmonic-rigidity}
Let $(M,g)$ be a closed connected Riemannian manifold of dimension $n \geq 4k \geq 4$ such that $R^k$ is a harmonic $(2k,2k)$ double form. Let $j$ be an integer with $1 \leq j \leq 2k$. If the curvature operator $R$ is $\bigl\lfloor\frac{n-j+1}{2}\bigr\rfloor$-positive, then $\cc^{2k-j}R^k = \lambda g^j$ for some constant $\lambda$. In particular,
\begin{enumerate}
\item if $R$ is $\bigl\lfloor\frac{n-2k+1}{2}\bigr\rfloor$-positive, then $R^k$ has constant sectional curvature;
\item if $R$ is $\bigl\lfloor\frac{n}{2}\bigr\rfloor$-positive, then $(M,g)$ is $(2k)$-Einstein;
\item if $R$ is $\bigl\lfloor\frac{n-2k+2}{2}\bigr\rfloor$-positive, then $(M,g)$ is hyper $(2k)$-Einstein.
\end{enumerate}
\end{theorem}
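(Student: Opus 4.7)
The plan is to apply the Bochner technique within the framework of double forms developed in \cite{Labbi-Lapl-Lich}, using $r$-positivity of the curvature operator to force the vanishing of the low-degree components in the orthogonal decomposition of $R^k$. I would start from the Laplacian--Lichnerowicz identity for symmetric $(p,p)$ double forms, $\Delta\omega = \nabla^*\nabla\omega + \mathcal{W}(\omega)$, where $\mathcal{W}$ is an algebraic operator depending linearly on the Riemann tensor $R$. Applied to the harmonic tensor $R^k$ and integrated against $R^k$ over the compact manifold $M$, this yields the Bochner identity
\[
\int_M |\nabla R^k|^2\,\mathrm{dvol}_g + \int_M \langle \mathcal{W}(R^k), R^k\rangle\,\mathrm{dvol}_g = 0.
\]

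The core step is a spectral analysis of the algebraic operator $\mathcal{W}$. Using the orthogonal decomposition $R^k = \sum_{i=0}^{2k} g^{2k-i}\omega_i$ from Proposition~\ref{prop:orth-decomp} and diagonalising $\mathcal{W}$ block-by-block in this decomposition, the curvature term $\langle \mathcal{W}(R^k), R^k\rangle$ splits into a sum of Weyl-type quadratic forms in each trace-free component $\omega_i$. The coefficients of these quadratic forms are signed linear combinations of the eigenvalues of the curvature operator $R$, and the crucial linear-algebra lemma (in the spirit of Nienhaus--Petersen--Wink, adapted to double forms) asserts that the block corresponding to $\omega_i$ becomes strictly positive as soon as $R$ is $\bigl\lfloor(n-i+1)/2\bigr\rfloor$-positive. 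Consequently, if $R$ is $\bigl\lfloor(n-j+1)/2\bigr\rfloor$-positive, positivity of the Weitzenbock block holds simultaneously for every index $i$ with $1\leq i\leq j$.

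Combining this block-positivity with the integrated Bochner identity forces $\omega_i = 0$ for every $i$ with $1 \leq i \leq j$, so only $\omega_0$ and the components of index $i > j$ survive. Applying $\cc^{2k-j}$ to $R^k$ and observing that $\cc^{2k-j}(g^{2k-i}\omega_i) = 0$ for $i > j$ (the excess contractions fall on the trace-free tensor $\omega_i$ and annihilate it), only the $i=0$ term contributes, giving $\cc^{2k-j} R^k = c\,\omega_0\,g^j$ for a nonzero numerical constant $c = c(n,k,j)$. Because $R^k$ satisfies the second Bianchi identity, Theorem~\ref{thm:Schur-properties}(1) then upgrades the scalar function $c\omega_0$ to a genuine constant $\lambda$, yielding $\cc^{2k-j} R^k = \lambda g^j$. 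The three numbered corollaries are recovered by specialising $j = 2k$, $j = 1$, and $j = 2k-1$, respectively.

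The hardest ingredient is clearly the spectral analysis of $\mathcal{W}$ in the second step. While the classical Lichnerowicz identity handles $p=1$, the passage to the higher exterior power $R^k$ involves nested contractions and the composition product $\circ$, and the threshold $\bigl\lfloor(n-i+1)/2\bigr\rfloor$ is the sharp bound arising from a delicate count of eigenvalue sums across each diagonal block. Matching this combinatorial bound precisely to the $r$-positivity hypothesis, and verifying that the threshold is simultaneously attainable for all $i\leq j$, is the main technical obstacle to overcome.
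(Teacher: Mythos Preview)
The paper does not actually prove this theorem: it is quoted verbatim from the external reference \cite{Labbi-Lapl-Lich} (an arXiv preprint on Hodge--de~Rham and Lichnerowicz Laplacians for double forms), and no argument is supplied here. Consequently there is no ``paper's own proof'' to compare your proposal against.

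That said, your outline is entirely consistent with what the title of \cite{Labbi-Lapl-Lich} suggests: a Bochner--Weitzenb\"ock argument for harmonic $(2k,2k)$ double forms, with the curvature term controlled block-by-block along the orthogonal decomposition $R^k=\sum_i g^{2k-i}\omega_i$, and the $r$-positivity threshold $\lfloor(n-i+1)/2\rfloor$ arising from a Petersen--Wink-type eigenvalue-sum estimate adapted to trace-free $(i,i)$ double forms. Your bookkeeping is sound: $r$-positivity is monotone in $r$ (since $\lambda_{r+1}\geq \tfrac{1}{r}\sum_{s\leq r}\lambda_s$), so the single hypothesis $\lfloor(n-j+1)/2\rfloor$-positive indeed covers all blocks $1\leq i\leq j$; and your contraction count $\cc^{2k-j}(g^{2k-i}\omega_i)=0$ for $i>j$ is correct because $\omega_i$ is trace-free. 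The appeal to Theorem~\ref{thm:Schur-properties}(1) to promote $c\,\omega_0$ to a genuine constant is also appropriate.

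The only caveat is the one you already flag: the spectral estimate on the curvature term $\mathcal{W}$ is the entire content of the result, and your proposal treats it as a black box. A complete proof must produce the Weitzenb\"ock formula for symmetric $(p,p)$ double forms explicitly, identify the action of the curvature term on each trace-free component, and then carry out the eigenvalue-sum argument; none of this is in the present paper, and you would have to reproduce the relevant portions of \cite{Labbi-Lapl-Lich} to make the argument self-contained.
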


\begin{corollary}\label{cor:Thorpe-rigidity}
Let $(M,g)$ be a closed connected $(2k)$-Thorpe manifold of dimension $n \geq 4k \geq 4$.
\begin{enumerate}
\item If $R$ is $\bigl\lfloor\frac{n-2k+1}{2}\bigr\rfloor$-positive, then $R^k$ has constant sectional curvature.
\item If $R$ is $\bigl\lfloor\frac{n-2k+2}{2}\bigr\rfloor$-positive, then $(M,g)$ is hyper $(2k)$-Einstein.
\end{enumerate}
\end{corollary}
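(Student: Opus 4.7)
The plan is to deduce the corollary directly from the two results already in place: the harmonicity of $R^k$ on a $(2k)$-Thorpe manifold (Proposition~\ref{prop:Thorpe-harmonic}) and the rigidity theorem for harmonic Thorpe tensors under $r$-positive curvature (Theorem~\ref{thm:Rk-harmonic-rigidity}). The only real task is to match the indices in Theorem~\ref{thm:Rk-harmonic-rigidity} to the two desired conclusions.

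First I would invoke Proposition~\ref{prop:Thorpe-harmonic}: since $(M,g)$ is $(2k)$-Thorpe and $n=2r\geq 4k$, the double form $R^k$ is harmonic. Thus the hypothesis of Theorem~\ref{thm:Rk-harmonic-rigidity} is satisfied.

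For part~(1), I would apply Theorem~\ref{thm:Rk-harmonic-rigidity} with $j=2k$. The positivity hypothesis becomes $\lfloor(n-2k+1)/2\rfloor$-positivity of the curvature operator, and the conclusion becomes $\cc^{0}R^k=R^k=\lambda g^{2k}$ for a constant $\lambda$, which is exactly the statement that $R^k$ has constant sectional curvature.

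For part~(2), I would apply Theorem~\ref{thm:Rk-harmonic-rigidity} with $j=2k-1$. The positivity hypothesis becomes $\lfloor(n-(2k-1)+1)/2\rfloor=\lfloor(n-2k+2)/2\rfloor$-positivity, and the conclusion becomes $\cc R^k=\lambda g^{2k-1}$ for a constant $\lambda$, which is precisely the hyper $(2k)$-Einstein condition by definition. Since the corollary is a formal specialization of two already-established results, there is no substantive obstacle; the only care needed is the bookkeeping of the index $j$ and the verification that $1\leq j\leq 2k$ in both applications, which holds trivially for $k\geq 1$.
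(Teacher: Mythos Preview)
Your proof is correct and is precisely the argument the paper intends: the corollary is stated without an explicit proof in the paper, as it follows immediately by combining Proposition~\ref{prop:Thorpe-harmonic} (harmonicity of $R^k$ on a $(2k)$-Thorpe manifold) with parts (1) and (3) of Theorem~\ref{thm:Rk-harmonic-rigidity}. Your index bookkeeping ($j=2k$ for part~(1), $j=2k-1$ for part~(2)) is exactly what is needed.
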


\begin{corollary}\label{cor:anti-Thorpe-rigidity}
Let $(M,g)$ be a closed connected $(2k)$-anti-Thorpe manifold of dimension $n \geq 4k \geq 4$. If $R$ is $\bigl\lfloor\frac{n-2k+2}{2}\bigr\rfloor$-positive, then $R^k=0$ (i.e., $(M,g)$ is $k$-flat).
\end{corollary}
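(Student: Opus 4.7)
The plan is to combine three ingredients already established in the excerpt: harmonicity of $R^k$ on anti-Thorpe manifolds (Proposition~\ref{prop:Thorpe-harmonic}), the rigidity theorem for harmonic $R^k$ under partial positivity (Theorem~\ref{thm:Rk-harmonic-rigidity}), and the vanishing of $h_{2k}$ for anti-Thorpe metrics (Proposition~\ref{prop:Thorpe-implications}). Together they will force every component in the orthogonal decomposition of $R^k$ to vanish.

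First I would apply Proposition~\ref{prop:Thorpe-harmonic} to deduce that $R^k$ is harmonic. The exponent $\lfloor(n-2k+2)/2\rfloor$ in the hypothesis is precisely $\lfloor(n-j+1)/2\rfloor$ with $j=2k-1$, so Theorem~\ref{thm:Rk-harmonic-rigidity} in that case yields $\cc R^k=\lambda g^{2k-1}$ for some constant $\lambda$; i.e.\ $(M,g)$ is hyper $(2k)$-Einstein. Next, Proposition~\ref{prop:Thorpe-implications} gives $h_{2k}=0$, equivalently $\cc^{2k}R^k=0$; contracting the hyper-Einstein identity $2k-1$ further times yields $0=\cc^{2k}R^k=\lambda\,\cc^{2k-1}g^{2k-1}$, and since $\cc^{2k-1}g^{2k-1}$ is a strictly positive universal constant this forces $\lambda=0$, hence $\cc R^k=0$.

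To conclude I would exploit the orthogonal decomposition $R^k=\sum_{i=0}^{2k}g^{2k-i}\omega_i$ provided by Proposition~\ref{prop:orth-decomp}. The anti-Thorpe condition (Theorem~\ref{thm:general-duality}) kills every even-indexed component, leaving only the pieces $\omega_{2j+1}$ with $0\le j\le k-1$. Using the standard identity $\cc(g^p\omega)=p(n-2q-p+1)g^{p-1}\omega+g^p\cc\omega$ for a $(q,q)$ double form, together with $\cc\omega_{2j+1}=0$, the equation $\cc R^k=0$ expands to
\begin{equation*}
\sum_{j=0}^{k-1}(2k-2j-1)(n-2k-2j)\,g^{2k-2j-2}\omega_{2j+1}=0.
\end{equation*}
Each coefficient is strictly positive thanks to $n\ge 4k$ (the tightest instance is $j=k-1$, giving $1\cdot(n-4k+2)\ge 2$), so uniqueness of the orthogonal decomposition forces every $\omega_{2j+1}=0$ and therefore $R^k\equiv 0$.

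The main subtlety I anticipate is purely bookkeeping: one has to match the positivity exponent in the hypothesis to the correct case of Theorem~\ref{thm:Rk-harmonic-rigidity}, and verify that the coefficients $(2k-2j-1)(n-2k-2j)$ remain nonzero at the extremal index $j=k-1$, which is exactly where the dimensional bound $n\ge 4k$ is used sharply.
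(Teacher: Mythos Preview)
Your proof is correct and follows essentially the route the paper intends: harmonicity of $R^k$ (Proposition~\ref{prop:Thorpe-harmonic}) feeds into Theorem~\ref{thm:Rk-harmonic-rigidity}(3) to give the hyper $(2k)$-Einstein condition, and this together with the anti-Thorpe structure forces $R^k=0$. The paper states the corollary without proof, expecting exactly this combination.

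One minor streamlining you might consider: once you know $\cc R^k=\lambda g^{2k-1}$, the orthogonal decomposition of $\cc R^k$ (using your contraction formula) directly shows that hyper $(2k)$-Einstein is equivalent to $\omega_1=\omega_2=\cdots=\omega_{2k-1}=0$ for $n\ge 4k$; combined with the anti-Thorpe vanishing of the even-indexed components, \emph{all} $\omega_i$ vanish immediately, without the intermediate step of proving $\lambda=0$ via $h_{2k}$. Your route through $\lambda=0$ is perfectly valid, just slightly longer.
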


\subsection{Locally conformally flat case}

For locally conformally flat manifolds, the Thorpe conditions can be expressed purely in terms of the Schouten tensor $A$ (recall that in this case $R = gA$).

\begin{proposition}\label{prop:conf-flat-Thorpe}
Let $(M,g)$ be a locally conformally flat manifold of even dimension $n=2r \geq 4k \geq 4$ with Schouten tensor $A$. Then $(M,g)$ is $(2k)$-Thorpe with $k$ even (resp. $(2k)$-anti-Thorpe with $k$ odd) if and only if
\begin{equation}\label{eq:conf-flat-condition}
\sum_{s=1}^{k} (-1)^s \,
    \frac{g^{s-1}\cc^s A^k}{s!(r-k+s)!} = 0 .
\end{equation}
\end{proposition}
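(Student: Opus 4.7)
My plan is to reduce the duality condition on $g^{r-2k}R^k$ to a condition on the Schouten tensor alone, by combining the conformal flatness decomposition with a Hodge-star identity applied to the $(k,k)$ double form $A^k$.

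First, I would use local conformal flatness to write $R=gA$ where $A$ is the Schouten tensor. Since $g$ and $A$ have even total degree they commute in the double exterior algebra, so the $k$-th exterior power satisfies $R^k=g^kA^k$, and hence
$$g^{r-2k}R^k \;=\; g^{r-k}A^k.$$
Thus $(M,g)$ is $(2k)$-Thorpe (resp.\ $(2k)$-anti-Thorpe) if and only if $\star(g^{r-k}A^k)=\epsilon\,g^{r-k}A^k$ with $\epsilon=+1$ (resp.\ $\epsilon=-1$).

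Next, I would apply the Hodge-star identity to the $(k,k)$ double form $A^k$. The relevant identity, namely the $p$-graded version of the formula used in the proof of Proposition~\ref{prop:contraction-criterion} (which corresponds to $p=2k$), reads: for any $(p,p)$ double form $\omega$ in dimension $n=2r$ with $p\leq r$,
$$\star\!\left(\frac{g^{r-p}\omega}{(r-p)!}\right) \;=\; g^{r-p}\sum_{s=0}^{p}(-1)^{s+p}\,\frac{g^{s}\,\cc^{s}\omega}{s!\,(r-p+s)!}.$$
The sign $(-1)^{s+p}$ is the $p$-graded factor inherent in formula (15) of \cite{Labbi-double-forms}; it collapses to $(-1)^s$ precisely in the even-$p$ case used earlier. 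Specializing to $p=k$ and $\omega=A^k$, the $s=0$ contribution is $(-1)^k g^{r-k}A^k/(r-k)!$, so the duality condition rewrites as
$$g^{r-k}\sum_{s=1}^{k}(-1)^{s+k}\,\frac{g^{s}\,\cc^{s}A^k}{s!\,(r-k+s)!} \;=\; \bigl(\epsilon-(-1)^k\bigr)\,\frac{g^{r-k}A^k}{(r-k)!}.$$

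The right-hand side vanishes precisely when $\epsilon=(-1)^k$, that is, for Thorpe with $k$ even and for anti-Thorpe with $k$ odd --- the dichotomy asserted by the proposition. In those cases, I would factor $g^{r-k+1}$ out of the left-hand sum and invoke Proposition~\ref{cor:cancellation}, which is valid since the remaining bidegree is $(k-1,k-1)$ and $(k-1)+(k-1)+(r-k+1)=r+k-1\leq 2r=n$, to obtain
$$\sum_{s=1}^{k}(-1)^{s+k}\,\frac{g^{s-1}\,\cc^{s}A^k}{s!\,(r-k+s)!}=0,$$
which coincides with \eqref{eq:conf-flat-condition} up to the global sign $(-1)^k$. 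Every step is reversible, giving the equivalence.

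The principal obstacle is justifying the sign $(-1)^{s+k}$ in the Hodge-star expansion for $A^k$ of arbitrary degree $k$, since the excerpt explicitly records only the even-degree specialization. I would either track the parity of $p$ through the derivation of formula (15) of \cite{Labbi-double-forms}, or verify the sign on test cases such as constant sectional curvature ($A=cg$, whence $A^k=c^kg^k$), where the identity reduces to an explicit combinatorial sum whose vanishing can be checked directly. Once the sign is secured, the remainder of the argument is purely formal.
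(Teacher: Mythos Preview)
Your proof is correct and follows essentially the same route as the paper's: reduce $g^{r-2k}R^k$ to $g^{r-k}A^k$ via $R=gA$, apply formula~(15) of \cite{Labbi-double-forms} to the $(k,k)$ form $A^k$, and cancel the power of $g$ using Proposition~\ref{cor:cancellation}. Your worry about the sign $(-1)^{s+k}$ is unnecessary: the general $(p,p)$ version of the Hodge-star expansion with sign $(-1)^{s+p}$ is exactly formula~(15) of \cite{Labbi-double-forms} (and is quoted in this form in the proof of Theorem~\ref{thm:gen-Lanczos}), so no separate verification is needed.
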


\begin{proof}
Because $R=gA$, we have $R^k = g^k A^k$ and $g^{r-2k}R^k = g^{r-k}A^k$. The duality condition $\ast(g^{r-k}A^k) = \pm g^{r-k}A^k$ together with formula (15) of \cite{Labbi-double-forms} yields
\[
(-1)^k\frac{g^{r-k}A^k}{(r-k)!}
   + \sum_{s=1}^{k} (-1)^{s+k}
      \frac{g^{r-k+s}\cc^s A^k}{s!(r-k+s)!}
   = \pm \frac{g^{r-k}A^k}{(r-k)!} .
\]
Cancellation property (Proposition~\ref{cor:cancellation}) gives \eqref{eq:conf-flat-condition}.
\end{proof}

Specialising to $k=2$ and $k=3$ we obtain more explicit criteria.

\begin{corollary}\label{cor:explicit-criteria}
Let $(M,g)$ be a locally conformally flat manifold of even dimension $n$.
\begin{enumerate}
\item For $n \geq 8$, $(M,g)$ is $4$-Thorpe if and only if its Schouten tensor satisfies
  \begin{equation}\label{eq:4-Thorpe-conf}
  A\circ A - (\cc A)A = \frac{\|A\|^2 - (\cc A)^2}{n}\,g .
  \end{equation}
\item For $n \geq 12$ and $\cc A = 0$, $(M,g)$ is $6$-anti-Thorpe if and only if
  \begin{equation}\label{eq:6-anti-Thorpe-conf}
  3n(n-2)(A\circ A)A
    - 3n\bigl(\|A\|^2 A - 2A\circ A\circ A\bigr)g
    - 4 g^2 \cc(A\circ A\circ A)= 0 .
  \end{equation}
\end{enumerate}
\end{corollary}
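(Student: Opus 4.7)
The plan is to combine Proposition~\ref{prop:conf-flat-Thorpe} with a direct algebraic evaluation of the contractions $\cc^{s}A^{k}$ that appear in \eqref{eq:conf-flat-condition}, for the two special values $k=2$ and $k=3$.

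First I would specialise \eqref{eq:conf-flat-condition}. For $k=2$ the sum has only two terms and, after clearing the factorials and using $r=n/2$, reduces to the single relation
\[
\cc A^{2}\;=\;\frac{1}{n}\,g\,\cc^{2}A^{2}.
\]
For $k=3$ the sum has three terms and, after multiplication by $6\,r!$, becomes
\[
-6r(r-1)\,\cc A^{3}+3r\,g\,\cc^{2}A^{3}-g^{2}\,\cc^{3}A^{3}=0.
\]

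Next I would derive explicit formulas for the contractions of the exterior powers of $A$ by iterating the Leibniz-type identity $\cc(B_{1}B_{2})=(\cc B_{1})B_{2}+(\cc B_{2})B_{1}-2\,B_{1}\circ B_{2}$ valid for any two symmetric $(1,1)$ double forms. This yields
\[
\cc A^{2}=2\bigl[(\cc A)A-A\circ A\bigr],\qquad \cc^{2}A^{2}=2\bigl[(\cc A)^{2}-\|A\|^{2}\bigr],
\]
and one degree higher
\[
\cc A^{3}=3(\cc A)A^{2}-6\,A(A\circ A),
\]
together with analogous expressions for $\cc^{2}A^{3}$ and $\cc^{3}A^{3}$ obtained by reapplying the same rule. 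Substituting the $k=2$ formulas into the reduced relation and rearranging produces exactly \eqref{eq:4-Thorpe-conf}, which settles part~(1).

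For part~(2) I would impose the hypothesis $\cc A=0$, which collapses the higher-order formulas to
\[
\cc A^{3}=-6\,A(A\circ A),\quad \cc^{2}A^{3}=-6\|A\|^{2}A+12\,A\circ A\circ A,\quad \cc^{3}A^{3}=12\,\cc(A\circ A\circ A).
\]
Inserting these into the three-term relation above and dividing through by the common factor $3$ then yields precisely \eqref{eq:6-anti-Thorpe-conf}.

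The main technical obstacle is pinning down the combinatorial coefficients in the iterated contractions $\cc^{s}A^{k}$. The convention $A^{k}=A\wedge\cdots\wedge A$ in the double-form algebra carries hidden factorials of $k!$, and $\cc$ only acts as a graded derivation up to alternation, so signs and multiplicities must be tracked carefully. A convenient safeguard is to diagonalise $A$ in an orthonormal frame: then $A^{k}/k!=\sum_{i_{1}<\cdots<i_{k}}\lambda_{i_{1}}\cdots\lambda_{i_{k}}\,e^{i_{1}\cdots i_{k}}\otimes e^{i_{1}\cdots i_{k}}$, and every $\cc^{s}A^{k}$ becomes an elementary symmetric polynomial in the eigenvalues of $A$, which can be matched term-by-term with the formulas above.
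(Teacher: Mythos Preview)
Your proposal is correct and follows essentially the same route as the paper: specialise Proposition~\ref{prop:conf-flat-Thorpe} to $k=2,3$, evaluate the contractions $\cc^{s}A^{k}$ via the Greub--Vanstone-type identities, and substitute. The paper simply quotes those identities from \cite{Labbi-Bel}, whereas you derive them from the Leibniz rule $\cc(B_{1}B_{2})=(\cc B_{1})B_{2}+(\cc B_{2})B_{1}-2B_{1}\circ B_{2}$ and propose the eigenvalue check as a safeguard; this is a harmless elaboration rather than a different argument.
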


\begin{proof}
The identities follow from Proposition~\ref{prop:conf-flat-Thorpe} together with the Greub–Vanstone formulas (see \cite[Proposition 6.1]{Labbi-Bel}):
\begin{align*}
\cc(A^2) &= 2(\cc A)A - 2A\circ A, &
\cc^2(A^2) &= 2(\cc A)^2 - 2\|A\|^2, \\
\cc(A^3) &= -6(A\circ A)A, &
\cc^2(A^3) &= -6\|A\|^2 A + 12A\circ A\circ A, \\
\cc^3(A^3) &= 12\cc(A\circ A\circ A) .
\end{align*}
\end{proof}

\begin{corollary}\label{cor:SxH-examples}
Let $\mathbb{S}^r(c)$ and $\mathbb{H}^r(-c)$, $c>0$,  denote the space forms of constant curvature $c$ and $-c$, respectively.
\begin{enumerate}
\item The product $\mathbb{S}^r(c) \times \mathbb{H}^r(-c)$ is $4$-Thorpe but not $4$-anti-Thorpe for every $r \geq 4$.
\item The product $\mathbb{S}^r(c) \times \mathbb{H}^r(-c)$ is both $4$-Thorpe and $6$-anti-Thorpe for every $r \geq 6$.
\end{enumerate}
\end{corollary}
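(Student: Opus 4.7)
I deduce both parts from Corollary~\ref{cor:explicit-criteria} after identifying the Schouten tensor of the product. Since $\mathbb{S}^r(c)$ and $\mathbb{H}^r(-c)$ have equal-dimensional factors with opposite constant sectional curvatures, the product $M=\mathbb{S}^r(c)\times\mathbb{H}^r(-c)$ is locally conformally flat. Writing $g=g_1+g_2$ and expanding the Riemann tensor as a $(2,2)$ double form yields $R=\frac{c}{2}(g_1\cdot g_1-g_2\cdot g_2)$; solving $R=gA$ with the ansatz $A=\alpha g_1+\beta g_2$ forces the cross term $g_1g_2$ to vanish and gives $A=\frac{c}{2}(g_1-g_2)$.

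Next I compute the scalar and composition invariants of $A$ that feed into Corollary~\ref{cor:explicit-criteria}. Under the composition product the $(1,1)$ double forms $g_1$ and $g_2$ are the orthogonal idempotents projecting onto the two factors, so $g_i\circ g_j=\delta_{ij}g_i$ and $g_1+g_2=g$ acts as the identity; combined with $\cc g_i=r$ and $\langle g_i,g_j\rangle=r\,\delta_{ij}$ this yields
\[
\cc A=0,\qquad \|A\|^2=\tfrac{c^2 r}{2},\qquad A\circ A=\tfrac{c^2}{4}\,g,\qquad A\circ A\circ A=\tfrac{c^2}{4}\,A.
\]
With $n=2r$ and $\cc A=0$, the $4$-Thorpe criterion of Corollary~\ref{cor:explicit-criteria}(1) reduces to $A\circ A=\frac{\|A\|^2}{n}g$, and both sides equal $\frac{c^2}{4}g$. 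Hence the product is $4$-Thorpe for every $r\geq 4$, which settles the Thorpe parts of both (1) and (2).

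To establish $6$-anti-Thorpe in (2), the hypotheses $n\geq 12$ and $\cc A=0$ of Corollary~\ref{cor:explicit-criteria}(2) hold whenever $r\geq 6$. Substituting the identities above, $(A\circ A)A=\frac{c^2}{4}\,gA$, $\|A\|^2 A-2A\circ A\circ A=\frac{c^2(r-1)}{2}A$, and $\cc(A\circ A\circ A)=\frac{c^2}{4}\cc A=0$. The first term of the criterion then contributes $3c^2 r(r-1)\,gA$, the second $-3c^2 r(r-1)\,gA$, and the third vanishes, so the identity holds. To rule out $4$-anti-Thorpe in (1), I use the dichotomy furnished by Proposition~\ref{cor:cancellation}: a metric that is simultaneously $(2k)$-Thorpe and $(2k)$-anti-Thorpe forces $g^{r-2k}R^k=0$, hence $R^k=0$ by cancellation. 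But the first-factor contribution to $R^2$ is $R_1^2=\frac{c^2}{4}g_1^4$, nonzero for $r\geq 4$, so $R^2\neq 0$ and the product cannot also be $4$-anti-Thorpe.

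The only non-routine point is ruling out $4$-anti-Thorpe: the conformally flat criterion of Proposition~\ref{prop:conf-flat-Thorpe} gives the same equation for $4$-Thorpe ($k=2$ even) as for some anti-Thorpe conditions, so a direct equation check cannot separate the two. The Thorpe/anti-Thorpe incompatibility via Proposition~\ref{cor:cancellation} is the clean tool, reducing matters to the elementary non-vanishing of $R^2$.
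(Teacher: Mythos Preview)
Your proof is correct and follows essentially the same route as the paper: compute the Schouten invariants $\cc A=0$, $A\circ A=\tfrac{c^2}{4}g$, $\|A\|^2=\tfrac{rc^2}{2}$, $A\circ A\circ A=\tfrac{c^2}{4}A$ for the product and substitute into the explicit criteria of Corollary~\ref{cor:explicit-criteria}. The only difference is in ruling out $4$-anti-Thorpe: the paper simply observes that the Gauss--Bonnet curvature $h_4$ (a constant multiple of the $\sigma_2$ curvature of $A$) is nonzero and invokes Proposition~\ref{prop:Thorpe-implications}, whereas you use the incompatibility of Thorpe and anti-Thorpe (which forces $g^{r-4}R^2=0$, hence $R^2=0$ by Proposition~\ref{cor:cancellation}) together with the evident nonvanishing of $R^2$. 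Both arguments are short and valid; the paper's is slightly more direct since it computes a single scalar, while yours avoids any further computation by leveraging the $4$-Thorpe part already established.
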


\begin{proof}
For $\mathbb{S}^r(c) \times \mathbb{H}^r(-c)$ one computes $\cc A = 0$, $A\circ A = \frac{c^2}{4} g$, $\|A\|^2 = \tfrac{rc^2}{2}$, and $A\circ A\circ A = \frac{c^2}{4} A$. Substituting these values into \eqref{eq:4-Thorpe-conf} and \eqref{eq:6-anti-Thorpe-conf} verifies the Thorpe and anti-Thorpe conditions. To show that it is not $4$-anti-Thorpe, remark that Gauss-Bonnet curvature $h_4$, which is in our case a constant multiple of the $\sigma_2$ curvature,  is not zero.
\end{proof}

For conformally flat $4$-Thorpe manifolds we obtain a complete classification.

\begin{proposition}\label{prop:classif-conf-flat-4Thorpe}
Let $(M,g)$ be a locally conformally flat manifold of even dimension $n = 2r \geq 8$. The following statements are equivalent:
\begin{enumerate}
\item $(M,g)$ is $4$-Thorpe;
\item $(M,g)$ is $4$-Einstein;
\item $(M,g)$ satisfies $\iota_{\Ric}R = \frac{\|\Ric\|^2}{n}\,g$;
\item $(M,g)$ is either flat or covered by one of $\mathbb{S}^n$, $\mathbb{H}^n$, or $\mathbb{S}^r(c)\times\mathbb{H}^r(-c)$.
\end{enumerate}
\end{proposition}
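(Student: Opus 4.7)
The plan is to establish the equivalences through the cyclic chain $(1)\Rightarrow(2)\Rightarrow(3)\Rightarrow(4)\Rightarrow(1)$. The implication $(1)\Rightarrow(2)$ is immediate from Proposition~\ref{prop:Thorpe-implications}: every $(2k)$-Thorpe metric is automatically $(2k)$-Einstein. At the other end of the cycle, $(4)\Rightarrow(1)$ is a case-by-case check: flat manifolds are $4$-Thorpe by Examples~\ref{ex:Thorpe-examples}(3), space forms by Examples~\ref{ex:Thorpe-examples}(1), and the product $\mathbb{S}^r(c)\times\mathbb{H}^r(-c)$ by Corollary~\ref{cor:SxH-examples}. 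All the real content therefore lies in the two middle implications.

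For $(2)\Rightarrow(3)$ I would start from the Avez/Lanczos identity~\eqref{eq:R-identity},
\begin{equation*}
\tfrac{1}{3!}\cc^3 R^2 = 2\cc(R\circ R)+\Scal\cdot\Ric-2\iota_{\Ric}R-2(\Ric\circ\Ric).
\end{equation*}
The $4$-Einstein hypothesis in dimension $n>4$ makes the left-hand side a constant multiple of $g$. Under the conformally flat assumption $R=gA$ and $\Ric=(n-2)A+(\cc A)g$, every remaining tensor on the right, namely $\cc(R\circ R)$, $\Ric\circ\Ric$, and $\iota_{\Ric}R$, admits a closed expression as a polynomial of degree at most $2$ in the Schouten tensor, obtained by combining the Greub--Vanstone contraction formulas with the composition rules for double forms. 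Comparing trace-free parts on both sides of the substituted identity should isolate the tensor $\iota_{\Ric}R-\tfrac{\|\Ric\|^2}{n}g$ and force it to vanish. I would in fact expect the resulting identity in $A$ to collapse exactly to the $4$-Thorpe criterion of Corollary~\ref{cor:explicit-criteria},
\begin{equation*}
A\circ A-(\cc A)A=\tfrac{\|A\|^2-(\cc A)^2}{n}\,g,
\end{equation*}
so that the algebraic step already encodes the key geometric information needed downstream.

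The step $(3)\Rightarrow(4)$ is the main challenge and the heart of the proof. Viewing $A$ as an endomorphism of $TM$, the identity above is a quadratic polynomial relation, so at every point the Schouten endomorphism has at most two distinct eigenvalues. Because $(M,g)$ is locally conformally flat of dimension $\geq 4$, the Weyl tensor vanishes and the second Bianchi identity forces the Cotton tensor to vanish as well, so $A$ is a Codazzi tensor. Standard arguments for Codazzi tensors with a prescribed finite spectrum then yield constancy of each eigenvalue along the corresponding eigendistribution, integrability and total geodesicness of the eigendistributions, and parallelism of the resulting orthogonal decomposition of $TM$. A local de~Rham splitting identifies the universal cover with a Riemannian product of at most two factors, each of constant sectional curvature; conformal flatness of the product then forces those constants to be opposite, $c$ and $-c$. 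The one-eigenvalue case produces $\mathbb{R}^n$, $\mathbb{S}^n$, or $\mathbb{H}^n$, while the genuine two-eigenvalue case produces $\mathbb{S}^r(c)\times\mathbb{H}^r(-c)$. The main obstacle is precisely this geometric passage: the algebraic reduction to the quadratic polynomial is a direct but delicate computation, whereas going from "two eigenvalues at each point" to the global classification requires careful use of the Codazzi and second Bianchi identities to promote pointwise information to constancy, together with the classical fact that a conformally flat Riemannian product of two manifolds of dimension $\geq 2$ must consist of space forms of opposite constant sectional curvatures.
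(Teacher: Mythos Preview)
Your approach is essentially the same as the paper's, with only presentational differences. The paper does not argue cyclically; instead it reduces each of $(1)$, $(2)$, $(3)$ \emph{directly} to the single Schouten-tensor equation $\cc A^2=\tfrac{\cc^2A^2}{n}g$ (equivalently your displayed quadratic in $A$), by computing $\cc^3(g^2A^2)$ and $\iota_{\Ric}(gA)$ with the Greub--Vanstone formulas rather than routing through the Lanczos identity~\eqref{eq:R-identity}. For the classification step the paper extracts from the quadratic relation the extra eigenvalue constraints $\lambda_1+\lambda_2=\cc A=p\lambda_1+(n-p)\lambda_2$ and then \emph{cites} Mari\~no-Villar, Derdzi\'nski, and Merton for the Codazzi-tensor structure theory, whereas you sketch that argument yourself.

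Two small points to tighten in your version. First, in your cyclic scheme the quadratic identity you invoke at the start of $(3)\Rightarrow(4)$ was derived under hypothesis $(2)$, not $(3)$; you need one line showing that $(3)$ also collapses to the same equation in $A$ (the paper's direct computation of $\iota_{\Ric}(gA)$ does exactly this). Second, you assert without comment that the two-eigenvalue case yields factors of equal dimension $r$. This does not come from conformal flatness alone (any $\mathbb{S}^p(c)\times\mathbb{H}^{n-p}(-c)$ with $p,n-p\geq 2$ is conformally flat); it is the quadratic relation itself, via the trace constraint $p\lambda_1+(n-p)\lambda_2=\lambda_1+\lambda_2$, that forces $p=n/2=r$. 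You should make this explicit.
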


\begin{proof}
First, Proposition \ref{prop:conf-flat-Thorpe} shows that the $4$-Thorpe condition is equivalent to 
\begin{equation}\label{4-Thorpe}
\cc A^2=\frac{\cc^2 A^2}{n}g.
\end{equation}
The $4$-Einstein condition  is defined by requiring
\begin{equation}\label{4-Einstein} \cc^3 (g^2A^2)=\frac{\cc^4(g^2 A^2)}{n}g.\end{equation}
Formula (6) in \cite{Labbi-double-forms} shows that
$$\cc^3 (g^2A^2)=6(n-3)(\cc^2A^2)g+6(n-3)(n-4)\cc A^2,\,\,\,\cc^4 (g^2A^2)=12(n-3)(n-2)\cc^2A^2.$$
Plugging the above two quantities in equation \ref{4-Einstein}, we see that the first two properties are equivalent.\\
Next, the $\iota_{\Ric}R$-Einstein condition is defined by requiring
\begin{equation}\label{Weakly-Einstein} \iota_{\Ric}R=\frac{||\Ric||^2}{n}g.\end{equation}
Recall that $\Ric=g\cc A+(n-2)A$, a direct substitution shows that equation \ref{Weakly-Einstein} is equivalent to
\begin{equation}\label{Weakly-Einstein2} \iota_{A}R+(\cc A)A=\frac{2(\cc A)^2+(n-2)||A||^2}{n}g.\end{equation}
Greub-Vanstone  identities shows that $\iota_{A}R=\iota_{A}(gA)=||A||^2+\cc A^2-(\cc A)A.$ Consequently, equation \ref{Weakly-Einstein2} takes the form
$$\cc A^2=\frac{2(\cc A)^2-2||A||^2}{n}g,$$
which is precisely equation \ref{4-Einstein}, as $\cc^2 A^2= 2(\cc A)^2-2||A||^2.$\\
Finally, the Schouten  tensor $A$ of a  locally conformally flat manifold of dimension $\geq 4$ is a Codazzi tensor. This can be justified quickly, as in this case $0=DR=gDA$ and by the cancellation property \ref{cor:cancellation} one has $DA=0$ for $n\geq 4$. On the other hand, the quadratic equation \ref{eq:4-Thorpe-conf} shows that for a $4$-Thorpe conformally flat manifold, the tensor  $A$ has at most two distinct eigenvalues, say $\lambda_1,\lambda_2$, such that $\lambda_1+\lambda_2=\cc A$ and $p\lambda_1+(n-p)\lambda_2=\cc A$, where $p$ is the multiplicity of $\lambda_1$. Marino-Villar \cite{Marino},  used the above facts and results by Derdzinski \cite{Derdzinski} and Merton \cite{Merton} to conclude that the manifold is either Einstein or locally is the product $\mathbb{S}^r(c) \times \mathbb{H}^r(-c)$, see the proof of Theorem 2 in \cite{Marino}. 
\end{proof}
\subsection{Extending the definition of $(2k)$-Thorpe and $(2k)$-anti-Thorpe to lower dimensions}
In this sub-section we assume that $2k\leq n=2r<4k$. Proposition 2.1 in \cite{Labbi-Balkan} shows that 
$$R^k=g^{4k-n}A,$$
with $A$ a symmetric $(n-2k,n-2k)$ double form. \\
The  $2k$-Thorpe condition (resp. anti-Thorpe) in this case reads
\begin{equation}
\ast \bigl(g^{r-2k}g^{4k-n}A\bigr)=g^{r-2k}g^{4k-n}A, \,\,\, \Bigl({\rm resp.}\,\,\,\, \ast  \bigl(g^{r-2k}g^{4k-n}A\bigr)=-g^{r-2k}g^{4k-n}A\Bigr).
\end{equation}
Or equivalently,
\begin{equation}\label{new-eq}
\ast \bigl(g^{2k-r}A\bigr)=g^{2k-r}A, \,\,\, \Bigl({\rm resp.}\,\,\,\, \ast \bigl(g^{2k-r}A\bigr)=-g^{2k-r}A\Bigr).
\end{equation}
 
 Let now  $A= \sum_{i=0}^{n-2k}g^i\omega_{n-2k-i}$ be the orthogonal decomposition of the double form $A$, it follows that  $R^k= \sum_{i=0}^{n-2k}g^{2k-i}\omega_{i}$. In particular, we remark the vanishing of $\omega_i=0$ for $n-2k<i\leq 2k$ are obtained for free because of the dimension restriction.\\
By imitating the proof of Theorem \ref{thm:general-duality}, one can see  that  
\begin{equation*}
\begin{split}
\ast \bigl(g^{2k-r}A\bigr)=& g^{2k-r}A \iff \omega_i=0,\,\, {\rm for}\,\, i\,\, {\rm odd}\,\, 0\leq i\leq n-2k.\\
\ast \bigl(g^{2k-r}A\bigr)=& -g^{2k-r}A \iff \omega_i=0,\,\, {\rm for}\,\, i\,\, {\rm even}\,\, 0\leq i\leq n-2k.
\end{split}
\end{equation*}

In the extreme case where $n=2k$, $A$ is a $(0,0)$ double form, that is a scalar function. Precisely, it  is a constant scalar multiple of the Gauss-Bonnet curvature $h_{2k}$. 
The $2k$-Thorpe condition here is vacuous. While the $2k$-anti-Thorpe condition is equivalent to the vanishing of $\omega_0$, that is the vanishing of the $h_{2k}$ Gauss-Bonnet curvature. 
We summarize the previous discussion in the following proposition
\begin{proposition}\label{lower-dimensions}
Let $(M,g)$ be a Riemannian manifold of even dimension $n=2r$ and let $k$ be an integer with $2k < n<4k$. Let $R^k = \sum_{i=0}^{2k} g^{2k-i}\omega_i$ be the orthogonal decomposition of $R^k$. Then $\omega_i=0$ for $n-2k<i\leq 2k$, furthermore 
\begin{enumerate}
\item  $(M,g)$ is $(2k)$-Thorpe if and only if all odd components $\omega_1,\omega_3,\dots,\omega_{(n-2k-1)}$ vanish;
\item $(M,g)$ is $(2k)$-anti-Thorpe if and only if all even components $\omega_0,\omega_2,\dots,\omega_{n-2k}$ vanish.
\end{enumerate}    
\end{proposition}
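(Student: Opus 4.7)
The plan is to reduce the duality conditions on $R^k$ to the Hodge-star analysis of a related $(r,r)$ double form, bringing the low-dimensional regime $2k<n<4k$ into the same framework used for Theorem \ref{thm:general-duality}. The decisive new input is the representation $R^k=g^{4k-n}A$ with $A$ a symmetric $(n-2k,n-2k)$ double form, supplied by Proposition 2.1 of \cite{Labbi-Balkan} and already recalled just before the proposition.

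I would first establish the preliminary vanishing $\omega_i=0$ for $n-2k<i\leq 2k$. Writing the orthogonal decomposition of $A$ as $A=\sum_{i=0}^{n-2k}g^{(n-2k)-i}\omega_i$ with trace-free $\omega_i$, and multiplying by $g^{4k-n}$, one obtains $R^k=\sum_{i=0}^{n-2k}g^{2k-i}\omega_i$. The uniqueness part of Proposition \ref{prop:orth-decomp} then forces the omitted components of $R^k$ at levels $n-2k<i\leq 2k$ to vanish, which is the first claim of the proposition.

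The next step is to rewrite the duality conditions. Since
\[
g^{r-2k}R^k \;=\; g^{r-2k}\cdot g^{4k-n}A \;=\; g^{2k-r}A \;=\; \sum_{i=0}^{n-2k} g^{r-i}\omega_i,
\]
the quantity $g^{r-2k}R^k$ is a $(r,r)$ double form in the ambient $n=2r$ space (with $n-2k<r$ under the assumption $n<4k$, so the sum has no further terms). Following the proof template of Theorem \ref{thm:general-duality} and applying the double Hodge star to each trace-free component via Theorem 4.3 of \cite{Labbi-double-forms}, I would obtain
\[
\ast\bigl(g^{r-2k}R^k\bigr)=\sum_{i=0}^{n-2k}(-1)^{i}\,g^{r-i}\omega_i.
\]
Equating with $\pm g^{r-2k}R^k$ and invoking uniqueness of the orthogonal decomposition in the $(r,r)$ slot yields that the $(2k)$-Thorpe (resp.\ anti-Thorpe) condition is equivalent to the vanishing of all odd-indexed (resp.\ even-indexed) $\omega_i$ in the range $0\leq i\leq n-2k$. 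Since $n-2k$ is even (being the difference of two even integers), these are precisely the lists $\omega_1,\omega_3,\ldots,\omega_{n-2k-1}$ and $\omega_0,\omega_2,\ldots,\omega_{n-2k}$ stated in the proposition.

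I do not anticipate a serious technical obstacle: the argument is essentially bookkeeping, step-by-step parallel to the proof of Theorem \ref{thm:general-duality}. The only subtle point is to keep track of the fact that, when viewed as a $(r,r)$ form in dimension $2r$, $g^{2k-r}A$ has \emph{missing} components at levels $i>n-2k$ (forced to zero again by the dimensional argument underlying Proposition 2.1 of \cite{Labbi-Balkan}), so that uniqueness of the orthogonal decomposition may legitimately be applied component by component across the effective range $0\leq i\leq n-2k$.
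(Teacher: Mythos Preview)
Your proposal is correct and follows essentially the same route as the paper: reduce $g^{r-2k}R^k$ to $g^{2k-r}A$ via the representation $R^k=g^{4k-n}A$ from \cite{Labbi-Balkan}, then apply the Hodge-star computation of Theorem~4.3 in \cite{Labbi-double-forms} component by component, exactly as in the proof of Theorem~\ref{thm:general-duality}. Your write-up is in fact slightly more explicit than the paper's discussion, and you correctly flag the one subtle point (that the components of the $(r,r)$ form $g^{2k-r}A$ above level $n-2k$ vanish by the dimensional restriction), which the paper leaves implicit.
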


In particular, for $n=2k+2$, being $(2k)$-Thorpe is equivalent to being $(2k)$-Einstein. For instance, in dimension $6$, a metric is $4$-Thorpe if and only if it is $4$-Einstein.

\subsection{Extending the definition of $(2k)$-Thorpe and $(2k)$-anti-Thorpe to odd dimensions}
The definitions and results of the previous sections are formulated for even-dimensional manifolds, taking advantage of the natural self-duality structures available when $n=2r$. However, the algebraic conditions expressed in terms of the orthogonal decomposition or in terms of contractions make sense for \emph{all} dimensions $n>2k$. This subsection explains how to extend the notions of $(2k)$-Thorpe and $(2k)$-anti-Thorpe to odd-dimensional Riemannian manifolds.

Recall that for a $(p,p)$ double form $\omega$ on an $n$-dimensional space, the orthogonal decomposition $\omega = \sum_{i=0}^{p} g^{p-i}\omega_i$ with trace‑free $\omega_i$ is well‑defined regardless of the parity of $n$ (Proposition~\ref{prop:orth-decomp}). Moreover, Corollary~\ref{cor:vanishing-omega_i} characterises the vanishing of a component $\omega_{p-k}$ by the condition $\cc^k\omega = gA$ for some double form $A$.

\begin{definition}[$(2k)$-Thorpe and $(2k)$-anti-Thorpe in arbitrary dimension]\label{def:odd-dim-Thorpe}
Let $(M,g)$ be a Riemannian manifold of dimension $n > 2k$ ($k \geq 1$). Denote by $\omega_0,\omega_1,\dots,\omega_{2k}$ the trace‑free components in the orthogonal decomposition $R^k = \sum_i g^{2k-i}\omega_i$.
\begin{enumerate}
\item $(M,g)$ is called \emph{$(2k)$-Thorpe} if $\omega_i = 0$ for every odd index $i$ with $0 \leq i \leq 2k$.
\item $(M,g)$ is called \emph{$(2k)$-anti-Thorpe} if $\omega_i = 0$ for every even index $i$ with $0 \leq i \leq 2k$.
\end{enumerate}
\end{definition}

Equivalently, using Corollary~\ref{cor:vanishing-omega_i}, these conditions can be stated purely in terms of contractions.

\begin{proposition}[Contraction formulation]\label{prop:odd-dim-contraction}
Let $(M,g)$ be a Riemannian manifold of dimension $n > 2k$.
\begin{enumerate}
\item $(M,g)$ is $(2k)$-Thorpe if and only if for every odd integer $i$ with $0 \leq i \leq 2k$ there exists a $(2k-i-1,2k-i-1)$ double form $A_i$ such that $\cc^i R^k = g A_i$.
\item $(M,g)$ is $(2k)$-anti-Thorpe if and only if for every even integer $i$ with $0 \leq i \leq 2k$ there exists a $(2k-i-1,2k-i-1)$ double form $A_i$ such that $\cc^i R^k = g A_i$.
\end{enumerate}
\end{proposition}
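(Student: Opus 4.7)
The proof is essentially a bookkeeping argument that applies Corollary~\ref{cor:vanishing-omega_i} componentwise to the orthogonal decomposition $R^k = \sum_{i=0}^{2k} g^{2k-i}\omega_i$. The plan is to identify, for each index $i$ appearing in the proposition, the corresponding component of $R^k$ whose vanishing is equivalent to the stated contraction condition, and then to match the parity of indices between contractions and components.

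First I would treat the generic case $n \geq 4k$. Applied with $p = 2k$, Corollary~\ref{cor:vanishing-omega_i} gives
\[
\cc^{i} R^k = gA_i \text{ for some double form } A_i \;\Longleftrightarrow\; \omega_{2k-i} = 0
\qquad (0 \leq i \leq 2k-1),
\]
and the remark after the corollary covers the boundary $i = 2k$ (namely, $\cc^{2k}R^k = 0 \iff \omega_0 = 0$, which fits the convention that a $(-1,-1)$ double form is zero). Comparing bidegrees forces $A_i$ to have bidegree $(2k-i-1, 2k-i-1)$. Setting $j = 2k - i$ and using that $2k$ is even, the integers $i$ and $j$ share the same parity. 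Consequently the vanishing of all odd-indexed components $\omega_j$ is equivalent to $\cc^i R^k$ having the form $gA_i$ for all odd $i$, and similarly for even indices. By Definition~\ref{def:odd-dim-Thorpe}, parts (1) and (2) follow.

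The second step is to handle the intermediate range $2k < n < 4k$, where Corollary~\ref{cor:vanishing-omega_i} does not apply directly. Here I would invoke the structure already established in Proposition~\ref{lower-dimensions}: there exists a symmetric $(n-2k, n-2k)$ double form $B$ with $R^k = g^{4k-n}B$, and the components $\omega_i$ for $n-2k < i \leq 2k$ vanish automatically. For those indices the corresponding contraction $\cc^{2k-i}R^k$ still carries a positive power of $g$ (since $4k-n \geq 1$), so the condition $\cc^{2k-i}R^k = gA_i$ is automatic. For the remaining range $0 \leq i \leq n-2k$, I would apply Corollary~\ref{cor:vanishing-omega_i} to $B$ (valid because $n \geq 2(n-2k)$) and translate the contraction condition on $B$ back to one on $R^k$ using $R^k = g^{4k-n}B$, invoking the cancellation property (Proposition~\ref{cor:cancellation}) to ensure the equivalence is genuine.

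The only real obstacle is the careful bookkeeping across the two dimensional regimes: ensuring that the parity matching $j = 2k - i$ is preserved, that the automatic vanishing in the range $n-2k < i \leq 2k$ is compatible with the stated contraction conditions, and that bidegree counting produces the advertised $(2k-i-1, 2k-i-1)$ double form $A_i$. No new algebraic identities are required; the statement is essentially a reformulation of Definition~\ref{def:odd-dim-Thorpe} via Corollary~\ref{cor:vanishing-omega_i}.
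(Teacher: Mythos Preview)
Your proposal is correct and follows the same route the paper intends: the proposition is stated in the paper immediately after Definition~\ref{def:odd-dim-Thorpe} with no proof beyond the sentence ``Equivalently, using Corollary~\ref{cor:vanishing-omega_i}, these conditions can be stated purely in terms of contractions.'' Your argument fleshes this out, and in fact goes further than the paper by treating the range $2k<n<4k$ separately (the paper's one-line justification via Corollary~\ref{cor:vanishing-omega_i} literally requires $n\geq 4k$).

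One small point worth tightening: in the intermediate range your claim that $\cc^{2k-i}R^k$ ``still carries a positive power of $g$ (since $4k-n\geq 1$)'' is correct but under-justified. The clean way to see it is to contract the orthogonal decomposition term by term: since each $\omega_l$ is trace-free, $\cc^s(g^{2k-l}\omega_l)$ is a nonzero constant times $g^{2k-l-s}\omega_l$ for $s\leq 2k-l$ and vanishes otherwise, so
\[
\cc^s R^k=\sum_{l=0}^{\min(n-2k,\,2k-s)}\beta_{n,s,l}\,g^{2k-l-s}\omega_l,
\qquad\beta_{n,s,l}\neq 0.
\]
When $s<4k-n$ every surviving exponent $2k-l-s\geq 4k-n-s>0$, so $\cc^sR^k=gA$ automatically; when $s\geq 4k-n$ the top term is $\beta_{n,s,2k-s}\omega_{2k-s}$, and $\cc^sR^k=gA$ iff $\omega_{2k-s}=0$. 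This makes the bookkeeping you flag entirely mechanical and removes the need to pass through $B$ and the cancellation property.
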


\begin{remark}
When $n$ is even, Definition~\ref{def:odd-dim-Thorpe} coincides exactly with Definition~\ref{def:2k-Thorpe} (see Theorem~\ref{thm:general-duality}). For $k=1$, the $(2)$-Thorpe condition reduces to $\omega_1=0$, i.e., $\cc R = \lambda g$ – the usual Einstein condition – while the $(2)$-anti-Thorpe condition reduces to $\omega_2=\omega_0=0$, i.e., $R=gA$ and $\cc^2 R = 0$, which on a manifold of dimension $n>3$ characterises conformally flat metrics with vanishing scalar curvature.

Throughout the paper we have chosen to work primarily in even dimensions because the Hodge star operator provides a natural self‑duality framework, and many results such as the harmonicity property and  the topological/geometrical obstructions  rely on this structure. Nevertheless, several algebraic and variational statements of this paper remain valid for arbitrary dimension $n>2k$ when the definitions are extended as above.
\end{remark}

\begin{example}
Consider a Riemannian manifold $(M^{2k+1},g)$ of odd dimension $n=2k+1$. In this case the orthogonal decomposition of $R^k$ reads
\[
R^k = g^{2k-1}(\omega_1 + g\omega_0),
\]
In this case, $(M,g)$ is $(2k)$-Thorpe precisely when $\omega_1=0$,  that  is $(2k)$-Einstein. Which in turn is equivalent to $R^k$ having constant $(2k)$-sectional curvature. It is $(2k)$-anti-Thorpe precisely when $\omega_0=0$, i.e., when the Gauss-Bonnet curvature $h_{2k}=0$. For $k=1$ this recovers the familiar dichotomy: Einstein vs. scalar‑flat metrics in dimension three.
\end{example}
The extension to odd dimensions underlines the algebraic nature of the Thorpe conditions, which are essentially requirements on the pattern of vanishing of the trace‑free components $\omega_i$. While the geometric interpretation via self‑duality is available only when $n$ is even, the algebraic definition provides a unified viewpoint for all dimensions $n>2k$.

\section{Critical metrics for the $L^2$-norm of Thorpe tensors}

This final section is devoted to the variational analysis of the functional
\[
G_{2k}(g)=\int_M\|R^k\|^2\,\mathrm{dvol}_g=\int_M\langle R^k,R^k\rangle\,\mathrm{dvol}_g,
\]
where $R^k$ is the $(2k)$-th Thorpe tensor, i.e., the $k$-th exterior power of the Riemann curvature tensor viewed as a $(2,2)$ double form. We compute its gradient, characterize its critical points, and establish connections with the previously introduced classes of metrics.

\subsection{Gradient of the $L^2$-norm functional}

The following proposition gives an explicit formula for the gradient of $G_{2k}$. It is the starting point for all subsequent characterizations of critical metrics.

\begin{proposition}\label{prop:grad-G2k}
For $2\leq 2k\leq n$, the gradient of the functional $G_{2k}$ on the space of Riemannian metrics is
\[
\nabla G_{2k}= \frac12\|R^k\|^2g-\frac1{(2k-1)!}\cc^{2k-1}(R^k\circ R^k)
              -k\,\iota_{R^{k-1}}\!\bigl(\delta\tilde\delta R^k\bigr).
\]
(For $k=1$ we adopt the conventions $R^0=1$ and $\iota_1\omega=\omega$.)
\end{proposition}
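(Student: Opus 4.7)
The plan is to differentiate $G_{2k}(g_t) = \int_M \tilde{g}_t(R_t^k, R_t^k)\,\mathrm{dvol}_{g_t}$ at $t=0$ for a one‑parameter family $g_t$ with $g_0 = g$ and $g_0' = h$. Three contributions appear: \emph{(i)} the variation of the induced inner product $\tilde g$, which by Lemma~\ref{lemma:diff-metric} equals $-\tilde g(F_h(R^k), R^k)$; \emph{(ii)} the variation of $R^k$, which since the exterior product on $(2,2)$ double forms is commutative gives $(R^k)' = k R^{k-1} R'h$, contributing $2k\,\tilde g(R^{k-1} R'h, R^k)$; and \emph{(iii)} the standard volume variation $\tfrac12\|R^k\|^2\langle g, h\rangle$.

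I would then substitute Lemma~\ref{lemma:diff-curvature} for $R'h$ into \emph{(ii)}, splitting that term into an algebraic piece involving $F_h(R)$ and a differential piece involving $(D\tilde D + \tilde D D)(h)$. Since $F_h$ is a derivation, $F_h(R^k) = k R^{k-1} F_h(R)$, so the algebraic piece simplifies to $\tfrac12\tilde g(F_h(R^k), R^k)$, which combined with \emph{(i)} collapses to $-\tfrac12\tilde g(F_h(R^k), R^k)$. Using the self‑adjointness of $F_h$, the explicit formula \eqref{Fh-formula}, cyclicity giving $\tilde g(A\circ R^k, R^k) = \tilde g(A, R^k\circ R^k)$ for symmetric $R^k$, and the adjointness of $\cc$ with left multiplication by $g$, this contribution is rewritten as $-\tilde g\bigl(h,\,\tfrac{\cc^{2k-1}(R^k\circ R^k)}{(2k-1)!}\bigr)$. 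Together with \emph{(iii)} this supplies the first two terms of the claimed gradient.

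For the differential piece, the adjointness between exterior multiplication by $R^{k-1}$ and the interior product $\iota_{R^{k-1}}$ rewrites $-\tfrac{k}{2}\tilde g\bigl(R^{k-1}(D\tilde D + \tilde D D)h,\, R^k\bigr)$ as $-\tfrac{k}{2}\tilde g\bigl((D\tilde D + \tilde D D)h,\,\iota_{R^{k-1}} R^k\bigr)$, and integration by parts ($D\leftrightarrow\delta$, $\tilde D\leftrightarrow\tilde\delta$) then transfers the second‑order operator onto $\iota_{R^{k-1}} R^k$. The decisive technical point, which I expect to be the main obstacle, is to commute $\delta$ and $\tilde\delta$ past $\iota_{R^{k-1}}$: the second Bianchi identity gives $DR^{k-1}=0=\tilde D R^{k-1}$, and the Leibniz rule for $D$ and $\tilde D$ (with sign $(-1)^{2k-2}=+1$) then forces $\iota_{R^{k-1}}\delta=\delta\,\iota_{R^{k-1}}$ and $\iota_{R^{k-1}}\tilde\delta=\tilde\delta\,\iota_{R^{k-1}}$. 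Combined with the symmetry $\delta\tilde\delta R^k=\tilde\delta\delta R^k$ for the symmetric double form $R^k$, this collapses $\tfrac12(\delta\tilde\delta+\tilde\delta\delta)(\iota_{R^{k-1}} R^k)$ to $\iota_{R^{k-1}}(\delta\tilde\delta R^k)$, producing the final term $-k\,\iota_{R^{k-1}}(\delta\tilde\delta R^k)$ of the stated gradient. The $k=1$ specialization, in which $\iota_{R^0}$ is the identity, provides a useful sanity check consistent with the gradient derived in the proof of Proposition~\ref{prop:2-and-4-Einstein}.
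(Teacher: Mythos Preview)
Your proposal is correct and follows essentially the same route as the paper's proof: the same three-term splitting via Lemmas~\ref{lemma:diff-metric} and~\ref{lemma:diff-curvature}, the same cancellation of the algebraic $F_h$ pieces, and the same handling of the differential piece via adjointness, the second Bianchi identity, and symmetry. One minor caution: the identity $\delta\tilde\delta R^k=\tilde\delta\delta R^k$ does not hold on the nose---the two are transposes of each other---but since $R^{k-1}$ and $h$ are symmetric this distinction disappears after applying $\iota_{R^{k-1}}$ and pairing with $h$, which is exactly the mechanism the paper invokes.
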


\begin{proof}
Let $h$ be a symmetric $(1,1)$ double form. Using Lemma~\ref{lemma:diff-metric},
\begin{align*}
G_{2k}'.h &= -\bigl\langle F_h(R^k),R^k\bigr\rangle
           +2\bigl\langle kR^{k-1}R'h,R^k\bigr\rangle
           +\tfrac12\bigl\langle \|R^k\|^2g,h\bigr\rangle \\
          &= I + II + III .
\end{align*}

Because $R^k$ is symmetric, formula \eqref{Fh-formula} yields
\begin{align*}
I &= -\bigl\langle \tfrac{g^{2k-1}h}{(2k-1)!}\circ R^k
                  +R^k\circ\tfrac{g^{2k-1}h}{(2k-1)!},R^k\bigr\rangle \\
  &= -2\bigl\langle \tfrac{g^{2k-1}h}{(2k-1)!},R^k\circ R^k\bigr\rangle
   = -2\bigl\langle \tfrac1{(2k-1)!}\cc^{2k-1}(R^k\circ R^k),h\bigr\rangle .
\end{align*}

For the second term we use the expression for $R'h$ from Lemma~\ref{lemma:diff-curvature}:
\begin{align*}
II &= 2\bigl\langle kR^{k-1}R'h,R^k\bigr\rangle \\
   &= -\tfrac{k}{2}\bigl\langle R^{k-1}(D\tilde D+\tilde D D)(h),R^k\bigr\rangle
      +\tfrac{k}{2}\bigl\langle R^{k-1}F_h(R),R^k\bigr\rangle .
\end{align*}
Since $F_h$ acts as a derivation, $R^{k-1}F_h(R)=\frac1k F_h(R^k)$; moreover $\iota_{R^{k-1}}$ is the adjoint of left multiplication by $R^{k-1}$. Hence
\begin{align*}
II &= -\tfrac{k}{2}\bigl\langle (D\tilde D+\tilde D D)(h),\iota_{R^{k-1}}R^k\bigr\rangle
      +\tfrac12\bigl\langle F_h(R^k),R^k\bigr\rangle \\
   &= -\tfrac{k}{2}\bigl\langle h,(\delta\tilde\delta+\tilde\delta\delta)
                          \bigl(\iota_{R^{k-1}}R^k\bigr)\bigr\rangle -\tfrac12 I .
\end{align*}
Because $R^{k-1}$ is symmetric, $D\tilde D(R^{k-1}h)=\bigl(\tilde D D(R^{k-1}h)\bigr)^t$, and the operator $\delta\tilde\delta$ is the adjoint of $\tilde D D$ with respect to the integral scalar product (see \cite{Labbi-variational,Labbi-Lapl-Lich}). Consequently,
\[
II = -k\bigl\langle h,\iota_{R^{k-1}}\!\bigl(\delta\tilde\delta R^k\bigr)\bigr\rangle-\tfrac12 I .
\]

Adding $I$, $II$ and $III$ we obtain
\[
G_{2k}'.h =  I +II+ III
          = \bigl\langle \tfrac12\|R^k\|^2g
                        -\tfrac1{(2k-1)!}\cc^{2k-1}(R^k\circ R^k)
                        -k\,\iota_{R^{k-1}}\!\bigl(\delta\tilde\delta R^k\bigr),\,h\bigr\rangle ,
\]
which proves the stated formula.
\end{proof}

\begin{remarks}\label{rem:grad-remarks}
\begin{enumerate}
\item Using the identity $\tilde\delta = -\cc D - D\cc$ we have
  $\delta\tilde\delta R^k = -\delta D(\cc R^k)$. Therefore the gradient can also be written as
  \[
  \nabla G_{2k}= \frac12\|R^k\|^2g-\frac1{(2k-1)!}\cc^{2k-1}(R^k\circ R^k)
                +k\,\iota_{R^{k-1}}\!\bigl(\delta D(\cc R^k)\bigr).
  \]
  For $k=1$ this reduces to the well‑known expression, see \cite[Proposition 4.70]{Besse}
  \[
  \nabla G_2 = \frac12\|R\|^2g-\cc(R\circ R)+\delta D(\Ric).
  \]

\item A direct computation using the definition of the composition product shows that
 $$\cc(R\circ R)(u,v)=\sum_{i=1}^nR\circ R(u,e_i,v,e_i)=\sum_{i=1}^n\sum_{k<l}R(u,e_i,e_k,e_l)R(e_k,e_l,v,e_i)=2\check{R}(u,v).$$
\end{enumerate}
\end{remarks}

\subsection{Critical metrics under harmonicity assumptions}

When the Thorpe tensor $R^k$ is harmonic, the gradient simplifies considerably and we obtain a clean necessary and sufficient condition for criticality.

\begin{corollary}\label{cor:harmonic-critical}
Let $(M,g)$ be a Riemannian manifold of dimension $n\geq 4k$ such that $R^k$ is a harmonic double form. Then $g$ is a critical point of $G_{2k}$ restricted to unit‑volume metrics if and only if
\begin{equation}\label{eq:weak-Einstein-condition}
\frac1{(2k-1)!}\cc^{2k-1}(R^k\circ R^k)=\frac{2k}{n}\|R^k\|^2\,g .
\end{equation}
\end{corollary}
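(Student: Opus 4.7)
The plan is to deduce the corollary directly from the gradient formula
\[
\nabla G_{2k}= \tfrac12\|R^k\|^2\,g - \tfrac{1}{(2k-1)!}\cc^{2k-1}(R^k\circ R^k) - k\,\iota_{R^{k-1}}\!\bigl(\delta\tilde\delta R^k\bigr)
\]
of Proposition~\ref{prop:grad-G2k}, by first killing the differential term under the harmonicity hypothesis and then extracting the weakly Einstein condition by a single algebraic contraction.

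The first step uses that $R^k$ is a symmetric $(2k,2k)$ double form, so $\delta$ and $\tilde\delta$ are intertwined by the slot-swap $\sigma$: one checks directly from the definitions that $\tilde\delta R^k = \sigma(\delta R^k)$. The harmonicity hypothesis $\delta R^k = 0$ therefore forces $\tilde\delta R^k = 0$ as well, so $\delta\tilde\delta R^k = 0$ trivially and the gradient collapses to the purely algebraic expression
\[
\nabla G_{2k}= \tfrac12\|R^k\|^2\,g - \tfrac{1}{(2k-1)!}\cc^{2k-1}(R^k\circ R^k).
\]

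Criticality of $g$ under the unit-volume constraint is, by Lagrange multipliers, equivalent to $\nabla G_{2k} = \mu g$ pointwise for some constant $\mu$. Rearranging,
\[
\tfrac{1}{(2k-1)!}\cc^{2k-1}(R^k\circ R^k) = \bigl(\tfrac12\|R^k\|^2 - \mu\bigr)\,g.
\]
Contracting once more with $g$ and using the identity $\cc^{2k}(R^k\circ R^k) = (2k)!\,\|R^k\|^2$ (a direct consequence of the adjointness between $\cc$ and left multiplication by $g$, generalising the case $p=2$ appearing in~\eqref{eq:Avez}) gives $2k\|R^k\|^2 = n\bigl(\tfrac12\|R^k\|^2 - \mu\bigr)$, whence $\mu = \bigl(\tfrac12 - \tfrac{2k}{n}\bigr)\|R^k\|^2$. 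Substituting $\mu$ back recovers precisely the weakly Einstein equation~\eqref{eq:weak-Einstein-condition}.

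For the converse, assuming~\eqref{eq:weak-Einstein-condition} the reduced gradient equals $\tfrac{n-4k}{2n}\|R^k\|^2\,g$, which is a scalar multiple of $g$ at every point. When $n=4k$ this vanishes identically and $g$ is critical for $G_{2k}$ without any constraint. When $n>4k$ one must additionally check that the scalar factor is constant, which is a Schur-type consequence of harmonicity: taking the divergence of~\eqref{eq:weak-Einstein-condition} and using $\delta R^k = 0$, along the lines of Theorem~\ref{thm:Schur-properties}, forces $d\|R^k\|^2 = 0$. The main technical obstacle of the proof is therefore the harmonicity cancellation in the first step; once this is in place the remainder is a purely algebraic tracing argument combined with the adjointness machinery developed in Section~\ref{Background}.
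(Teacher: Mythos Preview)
Your argument follows the paper's line: kill the differential term in the gradient formula via harmonicity, then determine the Lagrange multiplier by a single contraction. Your handling of the harmonicity step (via $\tilde\delta R^k=\sigma(\delta R^k)=0$) and of the trace identity $\cc^{2k}(R^k\circ R^k)=(2k)!\,\|R^k\|^2$ are both correct and somewhat more explicit than the paper's one-line ``consequently''.

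The one place where you diverge is the converse direction. You rightly observe that weak Einstein yields $\nabla G_{2k}=\frac{n-4k}{2n}\|R^k\|^2\,g$ and that, for $n>4k$, one must still verify the scalar factor is constant---a point the paper passes over in silence. However, your proposed patch via Theorem~\ref{thm:Schur-properties} does not go through: that theorem (and the identity~\eqref{eq:hyper-identity} underpinning it) requires the hyper-Einstein hypothesis $\cc R^k=\lambda g^{2k-1}$, which is not assumed here, and there is no Leibniz rule letting you compute $\delta\bigl(\cc^{2k-1}(R^k\circ R^k)\bigr)$ directly from $\delta R^k=0$. The clean argument is rather that $\nabla G_{2k}$ is \emph{always} divergence-free, by diffeomorphism invariance of $G_{2k}$; hence $\nabla G_{2k}=fg$ forces $0=\delta(fg)=-df$, so $f$ is constant on connected $M$. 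This closes the gap with no curvature hypothesis beyond what is already assumed, and is what the paper is tacitly invoking when it says criticality means $\nabla G_{2k}$ is ``proportional to $g$''.
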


\begin{proof}
If $R^k$ is harmonic, $\delta R^k=0$ and consequently $\delta\tilde\delta R^k=0$. By Proposition~\ref{prop:grad-G2k},
\[
\nabla G_{2k}= \frac12\|R^k\|^2g-\frac1{(2k-1)!}\cc^{2k-1}(R^k\circ R^k).
\]
Restricting to variations that preserve the volume ($\langle g,h\rangle=0$), the metric is critical precisely when $\nabla G_{2k}$ is proportional to $g$. Contracting once the equation $\frac12\|R^k\|^2g-\frac1{(2k-1)!}\cc^{2k-1}(R^k\circ R^k)=\lambda g$ gives $\lambda=\frac{n-4k}{2n}\|R^k\|^2$, and substituting this value yields \eqref{eq:weak-Einstein-condition}.
\end{proof}

Recall that for $(2k)$-Thorpe and $(2k)$-anti-Thorpe metrics the tensor $R^k$ is harmonic (Proposition~\ref{prop:Thorpe-harmonic}). Hence the previous corollary immediately implies the following characterization.

\begin{proposition}\label{prop:critical-Thorpe}
\begin{enumerate}
\item If $n=4k$, every Thorpe and every anti-Thorpe metric is a critical point of $G_{2k}$ (without volume restriction).
\item If $n$ is even and $n>4k$, a $(2k)$-Thorpe (resp. $(2k)$-anti-Thorpe) metric is a critical point of $G_{2k}$ restricted to unit volume if and only if it satisfies \eqref{eq:weak-Einstein-condition}.
\end{enumerate}
\end{proposition}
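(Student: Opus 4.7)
The proposition consolidates two preceding strands of the paper: Part (1) follows from a minimization argument in the critical dimension, while Part (2) follows by combining harmonicity of $R^k$ with the general criterion for criticality under harmonicity.

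For Part (1), when $n=4k$ the plan is to recycle the pointwise identity $h_{4k}=\langle R^k,\ast R^k\rangle$ established in the proof of Proposition~\ref{prop:L2-minimum}, which gives
\[
\|R^k\|^2 \pm h_{4k}=\tfrac{1}{2}\|R^k\pm\ast R^k\|^2\geq 0,
\]
with equality at every point if and only if $\ast R^k=\mp R^k$. Integrating,
\[
G_{2k}(g)=\mp\int_M h_{4k}\,\mathrm{dvol}_g+\tfrac{1}{2}\int_M\|R^k\pm\ast R^k\|^2\,\mathrm{dvol}_g.
\]
By the Gauss--Bonnet theorem the first term is a topological constant independent of $g$. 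A Thorpe (resp.\ anti-Thorpe) metric makes the remaining integral vanish identically and thus realises the absolute minimum of $G_{2k}$ on the whole space of Riemannian metrics, so it is an (unconstrained) critical point.

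For Part (2), assume $n$ is even with $n>4k$. By Proposition~\ref{prop:Thorpe-harmonic} the tensor $R^k$ is harmonic on any $(2k)$-Thorpe or $(2k)$-anti-Thorpe manifold, and Corollary~\ref{cor:harmonic-critical} then applies verbatim: under harmonicity of $R^k$, criticality of $G_{2k}$ restricted to unit volume is equivalent to the identity~\eqref{eq:weak-Einstein-condition}.

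No serious technical obstacle remains; each part is a short concatenation of already proved results. The single conceptual point worth highlighting is the different status of the volume constraint in the two regimes. In the critical dimension, the topological nature of $\int_M h_{4k}\,\mathrm{dvol}_g$ eliminates the trace direction of the variation at no cost, so no Lagrange multiplier is needed; for $n>4k$ this topological identification is unavailable, and the trace direction of $\nabla G_{2k}$ must be absorbed into the multiplier $\tfrac{2k}{n}\|R^k\|^2$ furnished by the unit-volume normalization.
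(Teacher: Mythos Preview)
Your argument is correct. Part (2) matches the paper exactly: harmonicity of $R^k$ from Proposition~\ref{prop:Thorpe-harmonic} feeds into Corollary~\ref{cor:harmonic-critical}, and there is nothing more to say.

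For Part (1) you take the route the paper explicitly acknowledges (``this part follows directly at once from Proposition~\ref{prop:L2-minimum}'') but does not write out; instead the paper supplies a second, purely variational proof. There the author computes directly, for $\ast R^k=\epsilon R^k$ with $\epsilon=\pm1$, that
\[
2\Bigl\langle \tfrac{1}{(2k-1)!}\cc^{2k-1}(R^k\circ R^k),h\Bigr\rangle
=\langle R^k,F_h(R^k)\rangle
=\epsilon\ast\!\bigl(R^kF_h(R^k)\bigr)
=\tfrac{\epsilon}{2}\ast F_h(R^{2k})
=\|R^k\|^2\langle g,h\rangle,
\]
using that $F_h$ is a derivation and that $\ast R^{2k}=\epsilon\|R^k\|^2$ in dimension $4k$. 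Combined with harmonicity, this forces $\nabla G_{2k}=0$ pointwise. Your minimization argument is shorter and more conceptual, and---because you work from the pointwise identity rather than citing Proposition~\ref{prop:L2-minimum} as stated---it does not require $\chi(M)\neq0$. The paper's alternative has the advantage of exhibiting explicitly that $\tfrac{1}{(2k-1)!}\cc^{2k-1}(R^k\circ R^k)=\tfrac12\|R^k\|^2 g$ for (anti-)Thorpe metrics in the critical dimension, which is of independent interest and does not rely on compactness or the Gauss--Bonnet theorem.
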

\begin{remark}
    The previous proposition generalizes \cite[Corollary 4.72]{Besse} obtained for $k=1$.
\end{remark}
\begin{proof} 
First, we prove part 1). One can see that this part follows directly at once from Proposition \ref{prop:L2-minimum}. However, we are going to provide another proof which uses the above variational formula.   Let $n=4k$ and $ *R^k=\epsilon R^k$ with $\epsilon=\pm 1$.

\begin{equation*}
\begin{split}
2\langle \frac{1}{(2k-1)!}c^{2k-1}&(R^k\circ R^k),h\rangle=2\langle (R^k\circ R^k),\frac{1}{(2k-1)!}g^{2k-1}h\rangle \\
=& \langle R^k, R^k\circ \frac{1}{(2k-1)!}g^{2k-1}h+\frac{1}{(2k-1)!}g^{2k-1}h\circ R^k\rangle \\
=&\langle R^k,F_h(R^k)\rangle=\str\epsilon \bigl\{R^kF_h(R^k)\bigr\}=\epsilon/2\str (F_h(R^{2k}).
\end{split}
\end{equation*}
In the last step we used the fact that the operator $F_h$ operates by derivations on the exterior algebra of double forms. We continue the proof as follows
\begin{equation*}
\begin{split}
\str (F_h(R^{2k})=&\str\Bigl( \frac{g^{4k-1}h}{(4k-1)!}\circ R^{2k}+  R^{2k}\circ \frac{g^{4k-1}h}{(4k-1)!}\Bigr)\\
=&  
\str\bigl( \frac{g^{4k-1}h}{(4k-1)!}\bigr) \circ \str R^{2k}+ \str R^{2k}\circ \str \bigl(\frac{g^{4k-1}h}{(4k-1)!}\bigr).
\end{split}
\end{equation*}
We remark that 
 $*R^{2k}=*(R^kR^k)=\langle *R^k,R^k\rangle=\epsilon||R^k||^2$ and
 $$\str\frac{1}{(4k-1)!}g^{4k-1}h=\langle \str\frac{1}{(4k-1)!}g^{4k-1},h\rangle=\langle g,h\rangle.$$
 It follows that 
$\str (F_h(R^{2k})=2\epsilon||R^k||^2\langle g,h\rangle.$  We have therefore proved that for any symmetric bilinear form $h$ the identity
$$2\langle \frac{1}{(2k-1)!}c^{2k-1}(R^k\circ R^k),h\rangle=||R^k||^2\langle g,h\rangle.$$
This completes the proof of the first part.
The second part follows at once from the above corollary \ref{cor:harmonic-critical}.
\end{proof}

\begin{definition}
A Riemannian metric satisfying \eqref{eq:weak-Einstein-condition} is called a \emph{weakly $(2k)$-Einstein metric}. For $k=1$ this reduces to the notion of weakly Einstein metrics introduced by Euh, Park and Sekigawa \cite{EPS}.
\end{definition}

Thus, for $n>4k$, the critical points of the constrained functional $G_{2k}$ among $(2k)$-Thorpe (or anti-Thorpe) metrics are exactly the weakly $(2k)$-Einstein metrics.

\subsection{Hyper $(2k)$-Einstein metrics}

Recall that a Riemannian manifold $(M,g)$ is \emph{hyper $(2k)$-Einstein} (with $2k<n$) if
\[
\cc R^k = \lambda g^{2k-1}
\]
for some function $\lambda$. As noted in Section~5, such a metric is automatically both $(2k)$-Einstein and $(2k)$-Thorpe, and $\lambda$ is necessarily constant. The following identity, which is a special case of Theorem~\ref{thm:gen-Lanczos}, relates the two tensors appearing in the gradients of $G_{2k}$ and $H_{2k}$.

\begin{proposition}\label{prop:hyper-identity}
Let $n\geq 4k$ and suppose $(M,g)$ is hyper $(2k)$-Einstein with $\cc R^k=\lambda g^{2k-1}$. Then
\begin{equation}\label{eq:hyper-identity}
\frac{\cc^{4k-1}R^{2k}}{(4k-1)!}
   = \frac{\cc^{2k-1}(R^k\circ R^k)}{(2k-1)!}+\lambda^2 c(n,k)\,g,
\end{equation}
where $c(n,k)$ is a constant depending only on $n$ and $k$.
\end{proposition}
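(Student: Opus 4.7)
The plan is to apply the generalized Lanczos identity (Theorem~\ref{thm:gen-Lanczos}) to $\omega = R^k$ with $p = 2k$; this is legitimate because the hypothesis $n \geq 4k$ is exactly $n \geq 2p$. Since $\omega^2 = R^{2k}$, the identity produces a formula that already contains both tensors appearing in the statement, namely $\cc^{4k-1}R^{2k}/(4k-1)!$ on one side and $(-1)^{2k}\cc^{2k-1}(R^k\circ R^k)/(2k-1)! = \cc^{2k-1}(R^k\circ R^k)/(2k-1)!$ as the leading term on the other. The remaining task is to show that all the other terms on the right of \eqref{eq:gen-ident} collapse into a scalar multiple of $\lambda^2 g$.

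The key simplification comes from iterating the hyper $(2k)$-Einstein condition $\cc R^k = \lambda g^{2k-1}$. Using the elementary formula $\cc g^p = p(n-p+1)g^{p-1}$ by induction yields
\[
\cc^r R^k = \lambda\,\alpha_r(n,k)\, g^{2k-r} \qquad (1 \leq r \leq 2k),
\]
with explicit constants $\alpha_r(n,k)$; in particular $\cc^{2k}R^k$ is a scalar multiple of $\lambda$, and $\cc^{2k-1}R^k$ is a scalar multiple of $\lambda g$. Consequently the product $(\cc^{2k}R^k/(2k)!)(\cc^{2k-1}R^k/(2k-1)!)$ in \eqref{eq:gen-ident} is already of the form (const)$\lambda^2 g$.

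For the sum $\sum_{r=1}^{2k-1}$, each summand is handled in the same way. For the first sub-term, substituting $\cc^r R^k/r!$ into the interior product and using $\iota_{g^s} = \cc^s$ (since $\iota$ is multiplicative on the exterior algebra of double forms and $\iota_g = \cc$) gives
\[
\iota_{\frac{\cc^{r}R^k}{r!}}R^k
  = \frac{\lambda\,\alpha_r}{r!}\,\cc^{2k-r}R^k
  = \frac{\lambda^2\,\alpha_r\,\alpha_{2k-r}}{r!}\,g^{r},
\]
and a further contraction $\cc^{r-1}/(r-1)!$ produces a scalar multiple of $\lambda^2 g$. For the second sub-term, the composition product $g^{2k-r}\circ g^{2k-r}$ is a scalar multiple of $g^{2k-r}$ (since $g^{s}/s!$ is the identity for $\circ$ on $\Lambda^{s}V$-part), so
\[
\frac{\cc^{r}R^k}{r!}\circ\frac{\cc^{r}R^k}{r!}
  = \lambda^2\,\beta_r(n,k)\,g^{2k-r},
\]
and applying $\cc^{2k-r-1}/(2k-r-1)!$ again yields a scalar multiple of $\lambda^2 g$.

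Summing these contributions and absorbing all numerical factors into a single constant $c(n,k)$ depending only on $n$ and $k$ produces the asserted identity. The main obstacle is purely combinatorial bookkeeping: tracking the explicit values of $\alpha_r(n,k)$ and $\beta_r(n,k)$ and confirming that the composition-product identity $g^{s}\circ g^{s} = s!\,g^{s}$ is applied with the correct normalization. Since the statement demands only the existence of $c(n,k)$, these explicit values are not needed; alternatively, one can bypass the sum entirely by invoking Corollary~\ref{cor:hyper-identity} with $p = 2k$ and combining it with the Hodge-star expansion \eqref{eq:Hodge-square} for $\omega = R^k$, which directly produces the conclusion after rearrangement.
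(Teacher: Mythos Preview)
Your proposal is correct and follows essentially the same route as the paper: apply the generalized Lanczos identity \eqref{eq:gen-ident} with $\omega=R^k$, $p=2k$, then use the iterated contraction formula $\cc^r R^k=\lambda\,\alpha_r(n,k)\,g^{2k-r}$ to reduce every remaining term to a scalar multiple of $\lambda^2 g$. Your write-up is in fact more detailed than the paper's own proof, which simply says ``substituting these expressions \ldots and simplifying yields \eqref{eq:hyper-identity}''; your explicit handling of the interior-product terms via $\iota_{g^s}=\cc^s$ and of the composition-product terms via $g^s\circ g^s=s!\,g^s$ fills in exactly the bookkeeping the paper omits.
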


\begin{proof}
Under the hyper‑Einstein condition we have
\[
\cc^r R^k = \frac{(n-2k+r)!\,\lambda}{(2k-r)!\,(n-2k+1)!}\,g^{2k-r}
\qquad(1\leq r\leq 2k).
\]
Substituting these expressions into the general Lanczos identity \eqref{eq:gen-ident} (with $p=2k$) and simplifying yields \eqref{eq:hyper-identity}.
\end{proof}

Identity \eqref{eq:hyper-identity} shows that for a hyper $(2k)$-Einstein metric the tensor $\cc^{2k-1}(R^k\circ R^k)$ is proportional to $g$ if and only if the tensor $\cc^{4k-1}R^{2k}$ is proportional to $g$, i.e., if and only if the metric is $(4k)$-Einstein. Combining this observation with Proposition~\ref{prop:critical-Thorpe} and  Proposition~\ref{prop:hyper-identity} we obtain the following equivalence.

\begin{proposition}\label{prop:hyper-critical}
Let $n>4k$ and let $g$ be a hyper $(2k)$-Einstein metric. Then the following statements are equivalent:
\begin{enumerate}
\item $g$ is a critical point of $G_{2k}$ (restricted to unit volume);
\item $g$ is a weakly $(2k)$-Einstein metric;
\item $g$ is a $(4k)$-Einstein metric.
\end{enumerate}
\end{proposition}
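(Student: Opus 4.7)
The plan is to deduce the three-way equivalence from two independent ingredients already available: harmonicity of $R^k$ under the hyper-Einstein hypothesis, which via Corollary~\ref{cor:harmonic-critical} converts $G_{2k}$-criticality into the weakly $(2k)$-Einstein equation, and the hyper-Einstein Lanczos identity of Proposition~\ref{prop:hyper-identity}, which directly relates the two candidate Euler--Lagrange tensors for $G_{2k}$ and $H_{4k}$.

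I would first record a structural consequence of the hyper-Einstein hypothesis. Iterating $\cc$ on $\cc R^k=\lambda g^{2k-1}$ gives $\cc^{2k-1}R^k=\gamma(n,k)\,\lambda\,g$ for a nonzero combinatorial constant $\gamma(n,k)$, so Theorem~\ref{thm:Schur-properties}(1) forces $\lambda$ to be constant on $M$. Since $R^k$ satisfies the second Bianchi identity ($DR^k=0$) and $Dg=0$, constancy of $\lambda$ yields $\tilde\delta R^k=-D\cc R^k-\cc D R^k=0$, and by symmetry of $R^k$ also $\delta R^k=0$. In particular $\delta\tilde\delta R^k=0$, i.e., $R^k$ is harmonic. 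With harmonicity in hand, (1)$\Leftrightarrow$(2) is exactly Corollary~\ref{cor:harmonic-critical}.

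For (2)$\Leftrightarrow$(3), I would reformulate both conditions uniformly as ``the indicated symmetric $(1,1)$ double form is a scalar multiple of $g$'': weakly $(2k)$-Einstein says $\cc^{2k-1}(R^k\circ R^k)/(2k-1)!$ is proportional to $g$, and $(4k)$-Einstein says $\cc^{4k-1}R^{2k}/(4k-1)!$ is proportional to $g$ (in each case the coefficient, $(2k/n)\|R^k\|^2$ respectively $(4k/n)h_{4k}$, is determined by a single contraction). Identity~\eqref{eq:hyper-identity}, specialized as in Proposition~\ref{prop:hyper-identity}, then shows that the two tensors differ by the constant multiple $\lambda^2 c(n,k)\,g$ of the metric; hence one is proportional to $g$ if and only if the other is, which closes the equivalence.

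The main obstacle is bookkeeping rather than substance. I need to ensure that $\lambda$ is genuinely constant so that the correction term in \eqref{eq:hyper-identity} is an honest scalar multiple of $g$, and that the hypothesis $n>4k$ is used correctly: when $n=4k$ one has $T_{4k}\equiv 0$ identically, so $(4k)$-Einstein is vacuous and Proposition~\ref{prop:critical-Thorpe}(1) already yields unconditional criticality, meaning the three-way equivalence acquires its intended content only in the range $n>4k$.
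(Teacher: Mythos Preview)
Your proposal is correct and follows essentially the same route as the paper: both arguments use Proposition~\ref{prop:hyper-identity} for (2)$\Leftrightarrow$(3), and both reduce (1)$\Leftrightarrow$(2) to Corollary~\ref{cor:harmonic-critical} via harmonicity of $R^k$. The only cosmetic difference is that the paper obtains harmonicity by noting that hyper $(2k)$-Einstein implies $(2k)$-Thorpe and then invoking Proposition~\ref{prop:Thorpe-harmonic} (packaged as Proposition~\ref{prop:critical-Thorpe}), whereas you derive $\delta R^k=0$ directly from $\cc R^k=\lambda g^{2k-1}$ with $\lambda$ constant; your route is slightly more self-contained but otherwise identical in substance.
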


This result illustrates the circle of ideas linking the critical points of the $L^2$-norm functional $G_{2k}$ with the various generalized Einstein conditions studied in the paper.

\end{document}